\newcommand{\defi}[1]{{\upshape\sffamily #1}}
\DeclareMathOperator{\ShHom}{\mathscr{H}\text{\kern -3pt {\calligra\large om}}\,}
\newcommand{\D}{\mathcal{D}}
\newcommand{\bw}{\bigwedge}
\renewcommand{\det}{\textrm{det}}
\renewcommand{\ll}{\lambda}
\newcommand{\onto}{\twoheadrightarrow}
\newcommand{\oo}{\otimes}
\newcommand{\pd}{\partial}
\renewcommand{\SS}{\mathbb{S}}
\newcommand{\C}{\mathbb{C}}
\newcommand{\GL}{\operatorname{GL}}
\newcommand{\lie}{\mathfrak{g}}
\newcommand{\Pf}{\operatorname{Pf}}
\newcommand{\Spec}{\operatorname{Spec}}
\newcommand{\Sym}{\operatorname{Sym}}
\newcommand{\codim}{\operatorname{codim}}
\renewcommand{\det}{\operatorname{det}}
\newcommand{\dom}{\operatorname{dom}}
\renewcommand{\ker}{\operatorname{ker}}
\newcommand{\opmod}{\operatorname{mod}}
\newcommand{\op}{\operatorname}
\newcommand{\bb}[1]{\mathbb{#1}}
\renewcommand{\rm}[1]{\textrm{#1}}
\newcommand{\mc}[1]{\mathcal{#1}}
\newcommand{\mf}[1]{\mathfrak{#1}}
\newcommand{\ol}[1]{\overline{#1}}
\def\lra{\longrightarrow}
\newtheorem{theorem}{Theorem}[section]
\newtheorem*{theorem*}{Theorem}
\newtheorem*{problem*}{Problem}
\newtheorem{lemma}[theorem]{Lemma}
\newtheorem{proposition}[theorem]{Proposition}
\newtheorem{corollary}[theorem]{Corollary}
\newtheorem*{corollary*}{Corollary}
\newtheorem*{main-thm*}{Main Theorem}
\newtheorem*{linear-resolutions*}{Theorem on Linear Resolutions}
\newtheorem*{regularity-powers*}{Theorem on Regularity}
\newtheorem*{injectivity-Ext*}{Theorem on Injectivity of Maps of Ext Modules}
\newtheorem*{Kodaira*}{Kodaira Vanishing for Determinantal Thickenings}
\theoremstyle{definition}
\newtheorem*{definition*}{Definition}
\newtheorem{example}[theorem]{Example}
\theoremstyle{remark}
\newtheorem{remark}[theorem]{Remark}
\newtheorem*{remark*}{Remark}
\numberwithin{equation}{section}
\begin{document}

\title{Borel--Moore homology of determinantal varieties}

\author{Andr\'as C. L\H{o}rincz}
\address{Humboldt--Universit\"at zu Berlin, Institut f\"ur Mathematik, Berlin, Germany 12489}
\email{lorincza@hu-berlin.de}

\author{Claudiu Raicu}
\address{Department of Mathematics, University of Notre Dame, 255 Hurley, Notre Dame, IN 46556\newline
\indent Institute of Mathematics ``Simion Stoilow'' of the Romanian Academy}
\email{craicu@nd.edu}

\subjclass[2020]{Primary 14M12, 14F10, 13D45, 14B15, 14F40, 13D07, 32S35, 55N33, 55N35, 57T15}

\date{}

\keywords{Determinantal varieties, Borel--Moore homology, de Rham cohomology, local cohomology, mixed Hodge structures}

\begin{abstract} We compute the rational Borel--Moore homology groups for affine determinantal varieties in the spaces of general, symmetric, and skew-symmetric matrices, solving a problem suggested by the work of Pragacz and Ratajski. The main ingredient is the relation with Hartshorne's algebraic de Rham homology theory, and the calculation of the singular cohomology of matrix orbits, using the methods of Cartan and Borel. We also establish the degeneration of the \v Cech-de Rham spectral sequence for determinantal varieties, and compute explicitly the dimensions of de Rham cohomology groups of local cohomology with determinantal support, which are analogues of Lyubeznik numbers first introduced by Switala. Additionally, in the case of general matrices we further determine the Hodge numbers of the singular cohomology of matrix orbits and of the Borel--Moore homology of their closures, based on Saito's theory of mixed Hodge modules.
\end{abstract}

\maketitle

\section{Introduction}\label{sec:intro}

For an affine determinantal variety, it is well-known that both intersection homology and Chow homology are concentrated in even degrees, and the first calculations of these groups appear in work of Zelevinskii \cite{zelevinsky}*{Section~3.3} and Pragacz \cite{pragacz}*{Section~4}. By contrast, it was observed by Pragacz and Ratajski \cite{pragrat}*{Remark~2.4} that Borel--Moore homology can be nonzero in odd degrees, and hence that an explicit calculation of the groups is likely to be more subtle. The goal of this note is to completely determine the ranks of the Borel--Moore homology groups for determinantal varieties of general, symmetric and skew-symmetric matrices. Our approach combines classical methods for computing singular cohomology of homogeneous spaces, going back to the work of Cartan and Borel in the~50s, with the description of Borel--Moore homology via the algebraic de Rham homology theory introduced by Hartshorne in \cite{hartshorne-derham}. We obtain in addition several results of independent interest:
\begin{itemize}
    \item We establish the degeneration of the \v Cech-de Rham spectral sequence for determinantal (general, symmetric, and skew-symmetric) varieties. Such a degeneration statement is also known to hold for complete intersections by work of Hartshorne--Polini \cite{har-pol}, as well as for subspace arrangements and in small dimensions by work of Reichelt--Walther--Zhang \cite{RWZ}, but remains open in general (see \cite{switala-derham}*{Question~8.2} for the complete local case).
    \item We determine explicitly the de Rham cohomology groups of local cohomology with determinantal support, answering a question suggested to us by Switala. The dimensions of these groups are called the \defi{\v Cech-de Rham numbers} in \cite[Definition 1.2]{RWZ}.
    \item We describe the singular cohomology ring for the orbits of fixed rank matrices, following the work of Cartan \cite{cartan} and Borel \cite{borel} (see also \cite{zach}*{Proposition~3.6} for the case of general matrices).
    \item In the case of general matrices, we also determine the Hodge numbers associated to the mixed Hodge structures on the Borel--Moore homology of determinantal varieties and on the cohomology of matrix orbits. This is based on the weight filtration on local cohomology modules, determined in \cite{perlman2}. 
\end{itemize}

Before stating our results, we establish some notation and conventions. We study a matrix space $X$ with its rank stratification in the following three classical cases:
\begin{itemize}
\item[(a)] $X=\C^{m\times n}$ is the space of $m\times n$ matrices, $m\geq n$, and $O_p \subset X$ the set of matrices of rank $p$;
\item[(b)] $X=\bigwedge^2 \C^n$ is the space of $n\times n$ skew-symmetric matrices, and $O_p \subset X$ the set of matrices of rank $2p$;
\item[(c)] $X=\Sym^2 \C^n$ is the space of $n\times n$ symmetric matrices, and $O_p \subset X$ the set of matrices of rank $p$.
\end{itemize}

All the cohomology groups we consider have coefficients in $\bb{C}$. We write $H_i^{BM}(V)=H_i^{BM}(V,\bb{C})$ for the Borel--Moore homology (see \cite{bm}), and $H^i(V)=H^i(V,\bb{C})$ for the singular cohomology of a variety $V$, and write $h_i^{BM}(V)$ and $h^i(V)$ for their respective vector space dimensions. If we write $d_X$ for the (complex) dimension of the matrix space $X$ then we have
\begin{equation}\label{eq:BM-X}
\sum_{i\geq 0}h_i^{BM}(X)\cdot q^i = q^{2d_X}.
\end{equation}
To encode the Borel--Moore homology groups for the non-trivial orbit closures $\ol{O}_p\subsetneq X$, it is useful to introduce the \defi{$q$-binomial coefficients} ${a\choose b}_q$, which are polynomials in $\bb{Z}[q]$ defined for $a\geq b\geq 0$ by
\[{a\choose b}_q = \frac{(1-q^a)\cdot(1-q^{a-1})\cdots (1-q^{a-b+1})}{(1-q^b)\cdot(1-q^{b-1})\cdots (1-q)}.\]

\begin{theorem}\label{thm:main2} The Hilbert--Poincar\'e polynomials for the Borel--Moore homology groups of the orbit closures $\ol{O}_p\subsetneq X$ are given as follows.
\begin{itemize} 
\item[(a)] If $X=\bb{C}^{m\times n}$ and $m\geq n$, then
\[\sum_{i\geq 0} h_i^{BM}(\ol{O}_p) \cdot q^i \, = \, \sum_{s=0}^p q^{2s(m+n-s) + (p-s)(p-s+2)} \cdot {n \choose s}_{q^{-2}} \cdot {n-1-s \choose p-s}_{q^{2}}.\]
\item[(b)] If $X=\bigwedge^2 \bb{C}^n$ and $m=\lfloor n/2 \rfloor$, then
\[\sum_{i\geq 0} h_i^{BM}(\ol{O}_p) \cdot q^i \, = \, \sum_{s=0}^p q^{2s(2n-1-2s)+(p-s)(2p-2s+3)} \cdot {m \choose s}_{q^{-4}} \cdot {m-1-s \choose p-s}_{q^{4}}.\]
\item[(c)] If $X=\Sym^2 \C^n$, $m=\lfloor n/2 \rfloor$, and if we let
\begin{equation}\label{eq:def-eps-s} 
\epsilon_p = \begin{cases}
1 & \text{if }p\text{ is even and }n=2m+1\text{ is odd}, \\
0 & \text{otherwise},
\end{cases}
\end{equation}
then
\[\sum_{i\geq 0} h_i^{BM}(\ol{O}_p) \cdot q^i \, = \,
\sum_{\substack{s=0 \\ s 
\equiv p \!\!\!\! \pmod{2}}}^p \!
q^{s(2n+1-s)+\frac{(p-s)(p-s+3)}{2}} \cdot {m+\epsilon_p \choose 
\lfloor \frac{s}{2} \rfloor}_{q^{-4}} \cdot \binom{\lfloor\frac{n-s-1}{2}\rfloor}{\frac{p-s}{2}}_{q^{4}}.
\]
\end{itemize}
\end{theorem}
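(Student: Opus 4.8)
The plan is to pass from Borel--Moore homology to Hartshorne's algebraic de Rham homology and then to local cohomology. By Hartshorne's comparison theorem one has $H^{BM}_i(\ol{O}_p;\bb{C})\cong H^{dR}_i(\ol{O}_p)$ for all $i$, and since $\ol{O}_p$ is a closed subvariety of the affine space $X$, its de Rham homology is built out of the local cohomology $\mathcal{D}_X$-modules $\mathcal{H}^b_{\ol{O}_p}(\mathcal{O}_X)$ through the \v Cech--de Rham spectral sequence
\[
E_2^{a,b}=H^a_{dR}\!\big(X,\,\mathcal{H}^b_{\ol{O}_p}(\mathcal{O}_X)\big)\ \Longrightarrow\ H^{dR}_{\,2d_X-a-b}(\ol{O}_p).
\]
Granting the degeneration of this spectral sequence at $E_2$ -- one of the structural results established here for determinantal varieties -- one gets $h_i^{BM}(\ol{O}_p)=\sum_{a+b=2d_X-i}\dim_{\bb{C}}E_2^{a,b}$, so the whole Hilbert--Poincar\'e polynomial is assembled from the de Rham cohomology dimensions of the local cohomology modules of $\ol{O}_p$.

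The second step computes those dimensions. Each $\mathcal{H}^b_{\ol{O}_p}(\mathcal{O}_X)$ is a direct sum of the simple $\GL$-equivariant $\mathcal{D}_X$-modules supported on the orbit closures $\ol{O}_s$ with $s\le p$, with explicitly known multiplicities (Raicu--Weyman in the case of general matrices, and the analogues in the symmetric and skew-symmetric cases). In cases (a) and (b) the stabilizer of a point of $O_s$ is connected, so the only such simple module on $\ol{O}_s$ is the intersection cohomology module $E_{\ol{O}_s}$; in case (c) the component group of the stabilizer is $\bb{Z}/2$, so there is in addition a module twisted by the sign character, and which of the two occurs in a given $\mathcal{H}^b$ is governed by a parity condition -- this is the origin of the congruence $s\equiv p\pmod 2$ and of $\epsilon_p$. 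One then needs $\dim_{\bb{C}}H^a_{dR}(X,E_{\ol{O}_s})$ and its twisted analogue. Under the Riemann--Hilbert correspondence $H^*_{dR}(X,E_{\ol{O}_s})$ is squeezed between $H^*_c(O_s)$ and $H^*(O_s)$, fitting into the long exact sequences relating the de Rham cohomologies of $j_!\mathcal{O}_{O_s}$, of $j_+\mathcal{O}_{O_s}$ and of $E_{\ol{O}_s}$, where $j\colon O_s\hookrightarrow X$ is the inclusion; and the orbit cohomology $H^*(O_s)$ -- together with $H^*_c(O_s)$, its Poincar\'e dual -- is exactly what the Cartan--Borel method provides: $O_s$ is a fibre bundle over a product of two Grassmannians in case (a), and over a single isotropic Grassmannian in cases (b), (c), with fibre $\GL_s$, respectively $\GL_{2s}/\Sp_{2s}$ and $\GL_s/\operatorname{O}_s$, and the Leray--Serre spectral sequence, whose transgressions are the pullbacks of the Chern (resp. Pontryagin) classes of the tautological bundles, computes $H^*(O_s)$ as the homology of a Koszul-type complex over the cohomology of the base. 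From this $H^*_{dR}(X,E_{\ol{O}_s})$ is obtained by discarding the part of $H^*(O_s)$ that does not survive to the intersection-cohomology truncation, using that $\ol{O}_s$ is an affine cone.

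The last step is bookkeeping. Combining the multiplicities of the simple modules in $\mathcal{H}^b_{\ol{O}_p}(\mathcal{O}_X)$ with the de Rham cohomology dimensions of $E_{\ol{O}_s}$ and collecting powers of $q$, one finds that each $\ol{O}_s$ contributes a monomial $q^{e(s,p)}$ times a product of two Gaussian binomial coefficients: one recording the cohomology of the Grassmannian occurring in the standard resolution of $\ol{O}_s$ -- which, after the reindexing from cohomology to Borel--Moore homology via Poincar\'e duality, enters with $q^{-2}$ in case (a) and with $q^{-4}$ in cases (b), (c) -- and one recording the combinatorics of the intersection-cohomology stalks of $\ol{O}_p$ along $O_s$; summing over $s$ yields the three stated formulas. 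The main obstacle is the degeneration of the \v Cech--de Rham spectral sequence: the naive spectral sequence of the orbit stratification of $\ol{O}_p$, and the one of the stratification of the complement $X\setminus\ol{O}_p$, both fail to degenerate already for $2\times 2$ matrices, so the finer $\mathcal{D}$-module input is genuinely needed; a secondary difficulty, specific to case (c), is to determine exactly which sign-twisted simple modules appear and what they contribute.
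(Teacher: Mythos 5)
Your overall framework is the same as the paper's (Borel--Moore homology via Hartshorne's de Rham homology, the \v Cech--de Rham spectral sequence, composition factors of $H^j_{\ol{O}_p}(\mc{O}_X)$, and Cartan--Borel for the orbits), but the crucial step is missing: you ``grant'' the degeneration of the spectral sequence at $E_2$, and you also assert that each $H^j_{\ol{O}_p}(\mc{O}_X)$ is a \emph{direct sum of simple} equivariant $\D$-modules. The second assertion is false in general -- for square matrices $m=n$, for skew-symmetric matrices with $n$ even, and for symmetric matrices with $n-p$ odd, the local cohomology modules are built from non-split indecomposables $Q_s$, so de Rham cohomology is a priori only subadditive over the composition series -- and the first assertion is exactly the point that has to be proved. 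The paper does not prove degeneration separately and then compute; it proves everything at once by a counting sandwich: the long exact sequence \eqref{eq:les-BM-hom} gives $b^{tot}(O_p)\leq b^{BM}_{tot}(\ol{O}_p)+b^{BM}_{tot}(\ol{O}_{p-1})$, the spectral sequence plus subadditivity over composition factors gives $b^{BM}_{tot}(\ol{O}_p)\leq\rho^{tot}(\ol{O}_p)\leq N_p$, and the Cartan--Borel computation of $H^*(O_p)$ shows $b^{tot}(O_p)=N_p+N_{p-1}$ (for all $p$ in cases (a),(b), and for $n-p$ even in case (c)), which forces equality throughout -- yielding simultaneously the degeneration, the vanishing of the maps $d_i$, the additivity of de Rham cohomology over composition factors, and the formulas. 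Your proposal never invokes the long exact sequence or this numerical comparison, so the degeneration (which you yourself call ``the main obstacle'') is simply assumed.

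Two further points in your sketch are inaccurate. First, in case (c) no sign-twisted simple modules occur in $H^j_{\ol{O}_p}(\mc{O}_X)$ at all: by the known composition-factor formula only the intersection homology modules $D_s$ with $s\equiv p\pmod 2$ appear, and the parity condition together with $\epsilon_p$ comes from that formula and from the de Rham cohomology of $D_s$, not from having to decide which twisted local systems contribute. Second, your recipe for computing $h^i_{dR}$ of the IC module $D_s$ -- squeezing it between $H^*_c(O_s)$ and $H^*(O_s)$ and ``discarding'' the part killed by the intersection-cohomology truncation of the affine cone -- does not work: $O_s$ is the smooth locus of $\ol{O}_s$, not the punctured cone, and the projective variety over which $\ol{O}_s$ is a cone is itself singular for $s\geq 2$, so $IH^*(\ol{O}_s)$ is not a truncation of $H^*(O_s)$. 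The paper instead quotes the closed formula for $\sum_i h^i_{dR}(D_s)q^i$ (a shifted Grassmannian Poincar\'e polynomial, ultimately coming from Zelevinsky's small resolution) and only uses Cartan--Borel for the orbit cohomology side of the sandwich.
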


The reader may prefer to rewrite the formulas above using the identity 
\begin{equation}\label{eq:qbin-pos}
{a\choose b}_{q^{-1}} \, = \, q^{-b(a-b)}\cdot {a\choose b}_q.
\end{equation}
Our choice was made in order to connect more directly with the statement of Theorem~\ref{thm:main} below. To illustrate Theorem~\ref{thm:main2}, we consider some examples of orbit closures that are affine cones over familiar projective varieties.

\begin{example}\label{ex:p=1}
 We consider the case $p=1$, when $\ol{O}_1$ is the affine cone over a smooth projective variety ${\bf V}$.
 \begin{itemize}
     \item[(a)] If $X=\bb{C}^{m\times n}$ then ${\bf V}\simeq\bb{P}^{m-1}\times\bb{P}^{n-1}$ is a Segre product, and
     \[\sum_{i\geq 0} h_i^{BM}(\ol{O}_1) \cdot q^i = (q^3+q^5+\cdots+q^{2n-1}) + (q^{2m}+q^{2m+2}+\cdots+q^{2m+2n-2}).\]
     In particular, as noted in \cite{pragrat}*{Remark~2.3}, we have that $H^{BM}_3(\ol{O}_1)\neq 0$.
     \item[(b)] If $X=\bigwedge^2 \bb{C}^n$ then ${\bf V}\simeq\bb{G}(2,n)$ is a Grassmann variety, and if we let $m=\lfloor n/2\rfloor$ then
     \[\sum_{i\geq 0} h_i^{BM}(\ol{O}_1) \cdot q^i = (q^5+q^9+\cdots+q^{4m-3}) + (q^{4n-4m-2}+q^{4n-4m+2}+\cdots+q^{4n-6}).\]
     \item[(c)] If $X=\Sym^2 \bb{C}^n$ then ${\bf V}\simeq\nu_2(\bb{P}^{n-1})$ is the degree two Veronese embedding of $\bb{P}^{n-1}$, and
     \[\sum_{i\geq 0} h_i^{BM}(\ol{O}_1) \cdot q^i = q^{2n}.\]
 \end{itemize}
\end{example}

A key step in the proof of Theorem~\ref{thm:main2} is the calculation of the singular cohomology of the orbits $O_p$ of fixed rank matrices, which is based on general methods for computing cohomology of homogeneous spaces, pioneered by Cartan and Borel. The details, including the structure of the cohomology ring, are given in Section~\ref{sec:cohorb}, and in particular we get the following description for the ranks of the singular cohomology groups.

\begin{theorem}\label{thm:HP-orbits}
The Hilbert--Poincar\'e polynomials for the singular cohomology of the orbits $O_p\subset X$ are given as follows.
\begin{itemize}
\item[(a)] If $X=\bb{C}^{m\times n}$ and $m\geq n$, then
\[\sum_{i\geq 0} h^i(O_p)\cdot q^i = {n \choose p}_{q^2} \cdot (1+q^{2m-2p+1})\cdot (1+q^{2m-2p+3}) \cdots (1+q^{2m-1}).\]
\item[(b)] If $X=\bigwedge^2 \bb{C}^n$, $m=\lfloor n/2 \rfloor$, and if we let $\epsilon = n-2m$, then
\[\sum_{i\geq 0} h^i(O_p)\cdot q^i = {m \choose p}_{q^4} \cdot (1+q^{2(n+\epsilon)-4p+1})\cdot (1+q^{2(n+\epsilon)-4p+5}) \cdots (1+q^{2(n+\epsilon)-3}).\]
\item[(c)] Suppose that $X= \Sym^2 \C^n$, let $m=\lfloor n/2\rfloor$, and let $\epsilon = n-2m$. If $p=2r$ then
\[\sum_{i\geq 0} h^i(O_p)\cdot q^i = {m \choose r}_{q^4} \cdot (1+q^{2(n+\epsilon)-4r+1})\cdot (1+q^{2(n+\epsilon)-4r+5}) \cdots (1+q^{2(n+\epsilon)-3}).\]
If $p=2r+1$ then 
\[\sum_{i\geq 0} h^i(O_p)\cdot q^i = {m-1+\epsilon \choose r}_{q^4} \cdot [(1+q^{4m-4r+1})\cdot (1+q^{4m -4r+5}) \cdots (1+q^{4m-3})] \cdot (1+q^{2n-1}).\]
\end{itemize}
\end{theorem}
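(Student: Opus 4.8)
The orbit $O_p$ is a single orbit for the action of a reductive group $G$ on $X$: in case (a) one takes $G=\GL_m(\C)\times\GL_n(\C)$ acting by $(g,h)\cdot A=gAh^{-1}$, and in cases (b),(c) one takes $G=\GL_n(\C)$ acting by $g\cdot A=gAg^{\top}$. Writing $P\subseteq G$ for the stabilizer of a fixed matrix of the appropriate rank, we have $O_p\cong G/P$; since $G$ and $P$ deformation retract onto maximal compact subgroups $K$ and $L\subseteq K$, this yields a homotopy equivalence $O_p\simeq K/L$, where a direct inspection of stabilizers identifies, up to homotopy,
\[
(K,L)\ =\
\begin{cases}
\bigl(U(m)\times U(n),\ \Delta U(p)\times U(m-p)\times U(n-p)\bigr) & \text{in case (a)},\\[2pt]
\bigl(U(n),\ \Sp(p)\times U(n-2p)\bigr) & \text{in case (b)},\\[2pt]
\bigl(U(n),\ \operatorname{O}(p)\times U(n-p)\bigr) & \text{in case (c)},
\end{cases}
\]
with $\Delta U(p)$ the diagonally embedded unitary group. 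The plan is to compute $H^*(K/L;\C)$ by the method of Cartan and Borel.

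For compact connected Lie groups $L\subseteq K$ one has the Cartan--Eilenberg--Moore spectral sequence $E_2=\Tor_{H^*(BK)}\!\bigl(H^*(BL),\C\bigr)\Rightarrow H^*(K/L;\C)$, in which $H^*(BK)$ and $H^*(BL)$ are polynomial rings on the universal characteristic classes, the module structure coming from restriction $\rho^*\colon H^*(BK)\to H^*(BL)$. In all three cases $\rho^*$ is governed by the Whitney sum formula applied to the trivial bundle $\underline{\C^n}$ (and $\underline{\C^m}$ in case (a)), which over $O_p\simeq K/L$ carries the tautological subbundles given by the kernel, image and radical: $\rho^*$ sends the total Chern class of $\underline{\C^n}$ to a product $(1+e_1+e_2+\cdots)(1+f_1+f_2+\cdots)$, so that the Whitney relations assert that this product (and its $\underline{\C^m}$-analogue) equals $1$; here the $e_i$ come from the $\Delta U(p)$- (resp.\ $\Sp(p)$-, $\operatorname{O}(p)$-)factor, and since the odd Chern classes of a quaternionic, resp.\ complexified real, bundle vanish rationally, in cases (b),(c) only classes of degree divisible by $4$ appear among the $e_i$ — this is the origin of the $q^4$'s. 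I would then show that some $\rk L$ of the $\rk K$ universal classes restrict to a regular sequence on $H^*(BL)$ that generates the whole Whitney ideal, so that the remaining $d:=\rk K-\rk L$ lie in the ideal it generates. A Koszul-complex change of basis then identifies $\Tor$ with $\bigl(H^*(BL)\oo_{H^*(BK)}\C\bigr)\oo{\bw}(\xi_1,\dots,\xi_d)$ with the $\xi_j$ in homological degree one; since the spectral sequence is multiplicative with this $E_2$-page, all differentials vanish, and
\[
H^*(K/L;\C)\ \cong\ \bigl(H^*(BL)\oo_{H^*(BK)}\C\bigr)\ \oo\ {\bw}(\xi_1,\dots,\xi_d),\qquad \deg\xi_j=(\deg\gamma_j)-1,
\]
for $\gamma_1,\dots,\gamma_d$ the redundant universal classes.

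It then remains to identify the two factors. The coinvariant factor is a polynomial ring modulo the Whitney relations; using the product identity to eliminate the quotient-bundle classes identifies it with the cohomology ring of a Grassmannian — the type-$A$ Grassmannian of $p$-planes in $\C^n$ in case (a), and the symplectic, resp.\ orthogonal, Grassmannian of $2p$-, resp.\ $p$-dimensional nondegenerate subspaces in cases (b),(c) — so its Hilbert series is the claimed $q$-binomial coefficient. For the exterior factor, one checks that the redundant classes can be taken to be the top $p$ Chern classes $c_{m-p+1},\dots,c_m$ of $\underline{\C^m}$ in case (a), and the odd-indexed Chern classes $c_k$ with $n-2p<k\le n$ in case (b); evaluating $(\deg\gamma_j)-1$ then produces precisely $(1+q^{2m-2p+1})(1+q^{2m-2p+3})\cdots(1+q^{2m-1})$ and $(1+q^{2(n+\epsilon)-4p+1})\cdots(1+q^{2(n+\epsilon)-3})$. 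This settles (a) and (b).

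Case (c) needs the extra input that $\operatorname{O}(p)$ is disconnected: with $L^{\circ}=\operatorname{SO}(p)\times U(n-p)$ one has $H^*(K/L;\C)=H^*(K/L^{\circ};\C)^{\mathbb{Z}/2}$, where the residual $\mathbb{Z}/2$ fixes the Pontryagin classes and, for $p$ even, negates the Euler class $e$ of $\underline{\C^n}/\mathrm{rad}$ (subject to $e^2=p_{p/2}$). Running the Cartan--Borel computation for the connected pair $(K,L^{\circ})$ and passing to $\mathbb{Z}/2$-invariants is what bifurcates the answer by the parity of $p$. For $p=2r$ the invariants simply drop $e$ (it enters only through $e^2$, already carried by the Pontryagin classes), leaving a formula of the same shape with $q^4$-binomial $\binom{m}{r}_{q^4}$. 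For $p=2r+1$ there is no Euler class, which changes by one the count of redundant Whitney relations — and that shift depends on the parities of $n-p$ and of $n$ — yielding the smaller binomial $\binom{m-1+\epsilon}{r}_{q^4}$ together with one additional exterior generator, always of degree $2n-1$. The main obstacle throughout is the degeneration of the spectral sequence — i.e.\ proving that the Whitney ideal is generated by a regular sequence among the $\rho^*$-images of the universal classes, so that $H^*(K/L)$ splits as a coinvariant algebra tensored with an exterior algebra — and, in case (c), correctly tracking how the Euler class interacts with the three relevant parities; once that is in place, the Hilbert series are read off by bookkeeping degrees in the Whitney identities.
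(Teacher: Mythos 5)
Your proposal is correct and follows essentially the same route as the paper: reduce $O_p$ to a compact homogeneous space $K/L$ (via Mostow) with the same pairs $(K,L)$, compute $H^*(K/L)$ by the Cartan--Borel method as $\operatorname{Tor}_{H^*(BK)}(\C,H^*(BL))$, verify that the redundant generators lie in the ideal so the answer splits as a coinvariant algebra tensored with an exterior algebra, and handle the disconnected stabilizer $\operatorname{O}(p)\times U(n-p)$ in case (c) by taking $\bb{Z}/2$-invariants of the computation for $SO(p)\times U(n-p)$, with the Euler-class/parity analysis producing the even/odd bifurcation. The paper carries out the ideal-membership step explicitly with elementary symmetric polynomials and Girard--Newton identities, which is the only detail your sketch leaves to be checked.
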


The relation between the invariants in Theorems~\ref{thm:main2} and~\ref{thm:HP-orbits} comes from the long exact sequence (see for instance \cite{pragrat}*{Lemma~2.2})
\begin{equation}\label{eq:les-BM-hom}
\cdots\lra H_i^{BM}(\ol{O}_{p-1})\overset{d_i}{\lra} H_i^{BM}(\ol{O}_{p})\lra H^{2d_{O_p}-i}(O_p)\lra H_{i-1}^{BM}(\ol{O}_{p-1})\overset{d_{i-1}}{\lra}H_{i-1}^{BM}(\ol{O}_{p})\lra\cdots
\end{equation}
where $d_{O_p}$ denotes the dimension of $O_p$. We then obtain inequalities
\begin{equation}\label{eq:HOp-leq-hBM}
h^{2d_{O_p}-i}(O_p) \leq h_i^{BM}(\ol{O}_p) + h_{i-1}^{BM}(\ol{O}_{p-1}),
\end{equation}
and note that equality holds for all $i$ if and only if the maps $d_i$ vanish for all $i$. Quite remarkably, this vanishing will occur most of the time.

\begin{theorem}\label{thm:di=0}
 The maps $d_i$ in \eqref{eq:les-BM-hom} vanish for all $i$ in the following cases:
 \begin{itemize}
     \item[(a)] $X=\bb{C}^{m\times n}$ and all $p$.
     \item[(b)] $X=\bw^2\bb{C}^{n}$ and all $p$.
     \item[(c)] $X=\Sym^2\bb{C}^{n}$ and $n-p$ even, or $p=1$.
 \end{itemize}
\end{theorem}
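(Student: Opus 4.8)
I would argue by induction on $p$. The case $p\le 1$ is immediate: $\ol O_0=\{0\}$, and since $\ol O_1$ is irreducible of positive dimension we have $H^{BM}_0(\ol O_1)=0$, so for each $i$ the map $d_i$ has either vanishing source ($i>0$) or vanishing target ($i=0$). For the inductive step I record the two reformulations already visible in the text: $d_i=0$ for all $i$ is equivalent to equality in \eqref{eq:HOp-leq-hBM} for all $i$; and, since exactness of \eqref{eq:les-BM-hom} identifies $\ker(\rho_i)$ with $\operatorname{image}(d_i)$, where $\rho_i\colon H^{BM}_i(\ol O_p)\to H^{2d_{O_p}-i}(O_p)$ is restriction to the open orbit, it is also equivalent to the injectivity of every $\rho_i$.

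The mechanism I would use is that $d_i$ is the proper pushforward along $\ol O_{p-1}\hookrightarrow\ol O_p$, hence a morphism of mixed Hodge structures, and so vanishes as soon as, for each $i$, the set of weights occurring in $H^{BM}_i(\ol O_{p-1})$ is disjoint from the set of weights occurring in $H^{BM}_i(\ol O_p)$. I would determine these weights from the Cartan--Borel picture of Section~\ref{sec:cohorb}: in $H^\bullet(O_{p'})$ the Schubert classes are of Hodge--Tate type with weight equal to the degree, while the exterior generators contributed by the general linear, symplectic, or orthogonal fibre sit in odd degrees $2k-1$ and carry weight $2k$; by Poincar\'e duality this fixes the weights of $H^{BM}_\bullet(O_{p'})$, and the weight spectral sequence of the stratification $\ol O_p=\bigsqcup_{p'\le p}O_{p'}$ then fixes those of $H^{BM}_\bullet(\ol O_p)$ --- the same information being repackaged, in practice, via the degenerate \v Cech-de Rham spectral sequence together with the weight filtration on local cohomology with determinantal support from \cite{perlman2}. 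With the weights in hand, comparing level $p-1$ with level $p$ in a fixed degree becomes a finite check on the $q$-binomial formulas of Theorems~\ref{thm:main2} and~\ref{thm:HP-orbits}: I expect that for general matrices (a), for skew-symmetric matrices (b), and for symmetric matrices (c) with $n-p$ even or $p=1$, the weights appearing in a common degree at the two levels are always distinct, so $d_i=0$; while for symmetric matrices with $n-p$ odd and $p>1$ --- the case flagged by $\epsilon_p$ in \eqref{eq:def-eps-s}, and ultimately caused by the disconnectedness of the orthogonal group $O_p(\C)$ sitting in the stabilizer of a rank-$p$ symmetric form --- a weight coincidence does occur and $d_i$ need not vanish.

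The step I expect to be the main obstacle is the lowest weight $-i$, i.e. the algebraic-cycle part of $H^{BM}_i$, where the weight ranges at the two levels always overlap and the Hodge-theoretic argument is silent; there one must show by hand that every cycle class on $\ol O_{p-1}$ pushes forward to zero in $H^{BM}_\bullet(\ol O_p)$. A first tool is the standard resolution $Z_{p'}\to\ol O_{p'}$ by the total space of a homogeneous vector bundle over a Grassmannian, whose Borel--Moore homology occupies an interval of even degrees $[2r_{p'},2d_{O_{p'}}]$ (with $r_{p'}$ the rank of the bundle): when these intervals for $p-1$ and $p$ are disjoint --- as they are for small $p$ --- the fundamental class of $\ol O_{p-1}$ and all cycle classes refining it automatically die in $H^{BM}_\bullet(\ol O_p)$, and in general one leans on the explicit structure of these resolutions, or equivalently on the local cohomology computation, to reach the same conclusion. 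A secondary difficulty is securing enough control of the mixed Hodge structure on $H^{BM}_\bullet(\ol O_p)$ in the symmetric and skew-symmetric cases, where the Hodge-module input is less routine than for general matrices; the safe alternative is to compute $h^{BM}_i(\ol O_p)$ directly from the $E_1$-page of the degenerate \v Cech-de Rham spectral sequence and then read the vanishing of the $d_i$ --- or, in the exceptional subcase of (c), their precise ranks --- off the inequality \eqref{eq:HOp-leq-hBM}.
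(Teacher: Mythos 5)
Your strategy is genuinely different from the paper's, but as it stands it has two serious gaps, one logical and one substantive. The logical gap is circularity: the inputs you want to compare --- the weights, and even the dimensions, of $H_i^{BM}(\ol{O}_p)$ --- are not available upstream of the statement. In the paper, Theorem~\ref{thm:main2} and the weight formula in Theorem~\ref{thm:main-weight} are \emph{consequences} of the degeneration of \eqref{eq:CdR-sp-seq} together with Theorem~\ref{thm:di=0} itself, and the degeneration is proved \emph{simultaneously} with the vanishing of the $d_i$ by a single count: $b^{tot}(O_p)\leq b^{BM}_{tot}(\ol{O}_p)+b^{BM}_{tot}(\ol{O}_{p-1})\leq \rho^{tot}(\ol{O}_p)+\rho^{tot}(\ol{O}_{p-1})\leq N_p+N_{p-1}$, with the two ends shown equal via Theorem~\ref{thm:coho-orbit} and the composition-series formulas for local cohomology (this is \eqref{eq:btotOp=NpNpm1}, Lemmas~\ref{lem:btot-Op-Np-generic} and~\ref{lem:btot-Op-Np-symm}). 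So your ``safe alternative'' --- read $h_i^{BM}(\ol{O}_p)$ off the degenerate \v Cech--de Rham spectral sequence and then check equality in \eqref{eq:HOp-leq-hBM} --- assumes exactly the degeneration that is proved together with the theorem; Example~\ref{ex:theweightisnotenough} shows that weight considerations alone cannot force that degeneration. Worse, in the skew-symmetric and symmetric cases the weight filtration on the relevant local cohomology modules (the analogue of \cite{perlman2}) is not known, and the alternative route through the stratification spectral sequence requires controlling connecting maps of exactly the type you are trying to kill, plus a justification that the Cartan--Borel isomorphisms (which pass through compact groups and classifying spaces) are compatible with the mixed Hodge structures.

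The substantive gap is that, even granting all the weights, your mechanism does not close: strictness of morphisms of mixed Hodge structures kills $d_i$ only in degrees where the weights of $H_i^{BM}(\ol{O}_{p-1})$ and $H_i^{BM}(\ol{O}_p)$ are disjoint, and you concede they are not --- the lowest-weight (cycle-class) parts, which by Remark~\ref{rem:totaro} carry the Chow groups, overlap. That is precisely where the content of the statement lies, and your patch is not a proof: the degree intervals coming from the resolutions $Z_{p-1}\to\ol{O}_{p-1}$ and $Z_p\to\ol{O}_p$ are disjoint only for small $p$ (for $X=\C^{n\times n}$ they already overlap for $p=3$, $n\geq 4$), and ``leaning on the explicit structure of the resolutions, or equivalently on the local cohomology computation'' is an appeal to an independent computation of Chow-group pushforwards that you do not carry out. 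Your base case $p\leq 1$ is correct and elementary, and the internal claim that failure in case (c) with $n-p$ odd, $p>1$ is tied to the disconnected orthogonal stabilizer is a nice heuristic consistent with Example~\ref{ex:sym-n-p-odd}; but the inductive step rests on unproved and partly circular inputs, whereas the paper's squeeze argument needs only total Betti numbers of orbits, subadditivity of de Rham cohomology over composition series, and an elementary binomial identity.
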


The following example shows that the assumption that $(n-p)$ is even is necessary when $X=\Sym^2\bb{C}^{n}$.

\begin{example}\label{ex:sym-n-p-odd}
 Suppose that $X=\Sym^2\bb{C}^{n}$, $p=2$, and $n=2m+1$. We have using Theorem~\ref{thm:HP-orbits}(c) that
 \[\sum_{i\geq 0} h^i(O_2)\cdot q^i = {m\choose 1}_{q^4}\cdot(1+q^{2n-1})=(1+q^4+\cdots+q^{4(m-1)})\cdot(1+q^{4m+1}),\]
 and in particular we have 
 \begin{equation}\label{eq:H4m=0}
     H^{2n-2}(O_2)=H^{4m}(O_2)=0.
 \end{equation}
 Moreover, by Theorem~\ref{thm:main2}(c) we have
 \[\sum_{i\geq 0} h_i^{BM}(\ol{O}_2) \cdot q^i = q^5\cdot{m\choose 1}_{q^4}+q^{2n}\cdot{m+1\choose 1}_{q^4} = (q^5+q^9+\cdots+q^{4m+1})+(q^{2n}+q^{2n+4}+\cdots+q^{2n+4m})\]
 and in particular we have $H_{2n}^{BM}(\ol{O}_2)=\bb{C}$. Recall from Example~\ref{ex:p=1} that $H_{2n}^{BM}(\ol{O}_1)=\bb{C}$, hence \eqref{eq:les-BM-hom} gives an exact sequence
 \[ \cdots \lra \bb{C} \overset{d_{2n}}{\lra} \bb{C} \lra H^{2d_{O_2}-2n}(O_2) \lra \cdots\]
 Using the fact that $d_{O_2}=2n-1$, we get $2d_{O_2}-2n=2n-2$, which combined with the vanishing \eqref{eq:H4m=0} shows that $d_{2n}$ is an isomorphism.
\end{example}

One can view \eqref{eq:HOp-leq-hBM} as a way to (collectively) bound from below the Borel--Moore homology of the orbit closures. For an upper bound, we study the \defi{\v Cech--de Rham spectral sequence} (using the terminology in~\cite{RWZ})
\begin{equation}\label{eq:CdR-sp-seq}
E_2^{ij}=H^i_{dR}(H^j_{\ol{O}_p}(\mc{O}_X)) \Longrightarrow H_{2d_X-i-j}^{BM}(\ol{O}_p),
\end{equation}
which follows by combining \cite{har-pol}*{Proposition~4.2} with the identification in \cite{har-pol}*{Theorem~3.1(7)} between Borel--Moore and de Rham homology. In \eqref{eq:CdR-sp-seq}, the groups $H^j_{\ol{O}_p}(\mc{O}_X)$ denote the local cohomology modules of the structure sheaf $\mc{O}_X$ with support in $\ol{O}_p$, which are regular holonomic $\mc{D}_X$-modules whose structure has been thoroughly analyzed in recent years \cites{raicu-weyman,raicu-weyman-loccoh,lor-rai,perlman}. For a $\mc{D}_X$-module~$M$, we denote by $H^i_{dR}(M)$ the cohomology groups of the (algebraic) de Rham complex
\begin{equation}\label{eq:drcomp}
DR(M): \qquad 0 \lra M \lra \Omega^1_X \oo_{\mc{O}_X} M \lra \cdots \lra \Omega^{d_X}_X \oo_{\mc{O}_X} M \lra 0,
\end{equation}
where $\Omega^i_X$ is the module of $i$-differential forms. The formation of de Rham cohomology $H^i_{dR}(M)$ agrees with the $\D$-module-theoretic derived integration (pushforward) $H^{i-d_X}(\pi_+ (M))$, where $\pi : X \to \{pt\}$ is the map to a point. It follows from \cite[Theorem 3.2.3]{htt} that if $M$ is holonomic then each $H^i_{dR}(M)$ is finite-dimensional, and this applies in particular to the groups $E_2^{ij}$ in \eqref{eq:CdR-sp-seq}. With the usual convention, we write $h^i_{dR}(M)$ for the vector space dimension of $H^i_{dR}(M)$. Note that although the Borel--Moore homology groups of $\ol{O}_p$ are intrinsic invariants (they to not depend on the embedding as a subvariety in $X$), the terms $E_2^{ij}$ in \eqref{eq:CdR-sp-seq} do a priori depend on both $\ol{O}_p$ and $X$. Quite remarkably, after an appropriate reindexing, they do provide intrinsic invariants of $\ol{O}_p$. More precisely, the \defi{\v Cech--de Rham numbers} (see \cite{RWZ}*{Section~2})
\begin{equation}\label{eq:def-CDR-numbers} 
\rho_{i,j}(\ol{O}_p) = h^{d_X-i}_{dR}(H^{d_X-j}_{\ol{O}_p}(\mc{O}_X))
\end{equation}
only depend on the variety $\ol{O}_p$ and not on the choice of the ambient affine space $X$: this was first proved by Switala over complete local rings \cite{switala-derham}*{Proposition~2.17}, and the version we use comes from \cite{bridgland}*{Theorem~1.1} (see also \cite[Theorem 6.2]{har-pol}).

Notice that the only non-vanishing \v Cech-de Rham number for $X$ is
\begin{equation}\label{eq:CdR-for-X}
\rho_{d_X,d_X} = h^0_{dR}(H^0_X(\mc{O}_X)) = 1,
\end{equation}
and in particular \eqref{eq:CdR-sp-seq} degenerates when $\ol{O}_p=X$, giving \eqref{eq:BM-X}. Our focus will therefore be on orbit closures $\ol{O}_p\subsetneq X$, where we have the following.

\begin{theorem}\label{thm:main}
The spectral sequence \eqref{eq:CdR-sp-seq} degenerates on the $E_2$ page for all the orbit closures $\ol{O}_p\subsetneq X$. Moreover, the bivariate generating functions for the \v Cech--de Rham numbers are given as follows.
\begin{itemize}
\item[(a)]
If $X=\bb{C}^{m\times n}$ and $m\geq n$, then
\[ \sum_{i,j\geq 0} \rho_{i,j}(\ol{O}_p)\cdot q^i \cdot w^j = \sum_{s=0}^p (qw)^{s(m+n-s)} \cdot {n \choose s}_{q^{-2}} \cdot w^{(p-s)(p-s+2)} \cdot {n-1-s \choose p-s}_{w^2}.\]
\item[(b)]
If $X=\bigwedge^2 \bb{C}^n$, $m=\lfloor n/2 \rfloor$, and if we let $\epsilon = n-2m$, then
\[ \sum_{i,j\geq 0} \rho_{i,j}(\ol{O}_p)\cdot q^i \cdot w^j  =\sum_{s=0}^p (qw)^{s(2n-1-2s)} \cdot {m \choose s}_{q^{-4}} \cdot w^{(p-s)(2p-2s+3)} \cdot {m-1-s \choose p-s}_{w^4}.\]
\item[(c)]
If $X=\Sym^2 \C^n$, $m=\lfloor n/2 \rfloor$, and if we take $\epsilon_p$ as in \eqref{eq:def-eps-s}, then 
\[ \sum_{i,j\geq 0} \rho_{i,j}(\ol{O}_p)\cdot q^i \cdot w^j  = 
 \sum_{\substack{s=0 \\ s 
\equiv p \!\!\!\! \pmod{2}}}^p \! (qw)^{\frac{s(2n+1-s)}{2}} \cdot {m+\epsilon_p \choose 
\lfloor \frac{s}{2} \rfloor}_{q^{-4}}
\cdot w^{\frac{(p-s)(p-s+3)}{2}} \cdot \binom{\lfloor\frac{n-s-1}{2}\rfloor}{\frac{p-s}{2}}_{w^4}.
\]
\end{itemize}
\end{theorem}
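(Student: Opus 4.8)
The plan is to pass through the Riemann--Hilbert correspondence, compute the $E_2$-page of \eqref{eq:CdR-sp-seq} from the $\mc{D}_X$-module structure of the local cohomology modules together with intersection cohomology of the orbit closures, and deduce the degeneration from a dimension count against Theorem~\ref{thm:main2}. First, \eqref{eq:CdR-sp-seq} is the second hypercohomology spectral sequence of the complex $R\Gamma_{\ol{O}_p}(\mc{O}_X)$ of regular holonomic $\mc{D}_X$-modules, whose cohomology modules are the $H^j_{\ol{O}_p}(\mc{O}_X)$ and whose total de Rham cohomology computes $H^{BM}_{2d_X-\bullet}(\ol{O}_p)$. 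For a convergent spectral sequence of finite-dimensional spaces one has $\dim E_2\ge \dim E_\infty=\sum_k h^{BM}_k(\ol{O}_p)$, with equality if and only if the sequence degenerates at $E_2$. Thus, once the numbers $\rho_{i,j}(\ol{O}_p)=\dim E_2^{d_X-i,d_X-j}$ are computed, setting $w=q$ in the resulting bivariate sum and comparing with Theorem~\ref{thm:main2} gives $\sum_{i,j}\rho_{i,j}(\ol{O}_p)=\sum_k h^{BM}_k(\ol{O}_p)$, hence the degeneration; so the whole theorem reduces to computing $h^i_{dR}(H^j_{\ol{O}_p}(\mc{O}_X))$ for all $i,j$.

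The category of $\GL$-equivariant coherent $\mc{D}_X$-modules has finitely many simple objects $D_\beta$, each attached to an orbit $O_\beta$ together with an irreducible equivariant local system on it; under Riemann--Hilbert $DR(D_\beta)$ is, up to a shift by $\codim\ol{O}_\beta$, the intersection complex $\operatorname{IC}_{\ol{O}_\beta}$ of that local system, so $h^i_{dR}(D_\beta)$ is a shift of the dimension of the intersection cohomology $\operatorname{IH}^\bullet(\ol{O}_\beta)$ with local system coefficients. Each $\ol{O}_\beta$ is an affine cone admitting a small resolution whose total space is a vector bundle over an appropriate (in cases (b) and (c), isotropic) Grassmannian $\mathcal{G}_\beta$, so $\operatorname{IH}^\bullet(\ol{O}_\beta)\cong H^\bullet(\mathcal{G}_\beta)$, whose Poincar\'e polynomial is the relevant $q$-binomial coefficient: this produces the factors ${n\choose s}_{q^{\pm2}}$, ${m\choose s}_{q^{\pm4}}$ and ${m+\epsilon_p\choose\lfloor s/2\rfloor}_{q^{-4}}$, while the shift $\codim\ol{O}_\beta$ accounts for the leading monomials $(qw)^{\dim\ol{O}_\beta}$. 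In cases (a) and (b) all equivariant local systems on the orbits are trivial, so only the simples $D_s$ with constant coefficients appear; in case (c) one must additionally track the rank-one local systems coming from the two connected components of the complex orthogonal group, compute their (frequently vanishing) twisted intersection cohomology, and this accounts for the second family of simple modules, the restriction $s\equiv p\pmod 2$, and the correction $\epsilon_p$.

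The composition factors of each $H^j_{\ol{O}_p}(\mc{O}_X)$, with multiplicities, are known from \cite{raicu-weyman}, \cite{raicu-weyman-loccoh}, \cite{perlman}; recording the homological degree $j$ equips each simple $D_\beta$ with a graded multiplicity whose Poincar\'e polynomial is precisely the remaining, $w$-dependent factor in the asserted formulas (for instance $w^{(p-s)(p-s+2)}{n-1-s\choose p-s}_{w^2}$ in case (a)). To conclude one must pass from the composition series to de Rham cohomology, that is, show that $h^\bullet_{dR}$ is additive along these (in general non-split) composition series --- equivalently, that every connecting homomorphism in the attached long exact sequences of de Rham cohomology vanishes. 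Granting this, $h^i_{dR}(H^j_{\ol{O}_p}(\mc{O}_X))=\sum_\beta[H^j_{\ol{O}_p}(\mc{O}_X):D_\beta]\cdot h^i_{dR}(D_\beta)$; combining the $q$- and $w$-factors above and simplifying with \eqref{eq:qbin-pos} and standard $q$-binomial (Gauss--Cauchy type) identities yields the stated bivariate generating function, and by the first paragraph also the degeneration.

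The technical heart is this last additivity statement: de Rham cohomology is not exact, and the local cohomology modules are far from semisimple, so the vanishing of the connecting maps is a genuine phenomenon. I would prove it Hodge-theoretically: \eqref{eq:CdR-sp-seq} is a spectral sequence of mixed Hodge structures, the $H^j_{\ol{O}_p}(\mc{O}_X)$ underlie mixed Hodge modules whose weight filtrations have semisimple graded pieces --- computed in \cite{perlman2} in case (a), and obtainable by the same circle of ideas in cases (b) and (c) --- and the connecting maps, being strict morphisms of mixed Hodge structures between constituents of incompatible weight, are forced to vanish; alternatively one works directly in the equivariant $\mc{D}_X$-module category with the explicit extension data. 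This step, together with the careful bookkeeping of local systems, isotropic Grassmannians, and parity in the symmetric case (c), is where I expect the main difficulty to lie.
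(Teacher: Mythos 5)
There is a genuine gap, in fact two. First, your reduction is circular relative to what is actually available: you propose to verify degeneration by specializing your computed $E_2$-page to $w=q$ and "comparing with Theorem~\ref{thm:main2}," but in the paper Theorem~\ref{thm:main2} is a \emph{consequence} of Theorem~\ref{thm:main}; there is no independent computation of $\sum_k h_k^{BM}(\ol{O}_p)$ to compare against. The paper's independent topological input is the singular cohomology of the open orbits $O_p$, computed via Cartan--Borel (Theorem~\ref{thm:coho-orbit}), which enters through the long exact sequence \eqref{eq:les-BM-hom} to give $b^{tot}(O_p)\le b^{BM}_{tot}(\ol{O}_p)+b^{BM}_{tot}(\ol{O}_{p-1})$; your outline never uses $H^*(O_p)$ at all, so the dimension count you need has no source.

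Second, the step you correctly identify as the technical heart --- additivity of $h^\bullet_{dR}$ along the (non-split) composition series of $H^j_{\ol{O}_p}(\mc{O}_X)$ --- is not established by your proposed argument. The Hodge-theoretic route requires the weight filtration on these local cohomology modules, which is known only in case (a) (\cite{perlman2}); for skew-symmetric and symmetric matrices it is open, as the paper itself notes. Moreover, even where weights are known, the constituents are \emph{not} of incompatible weight: Example~\ref{ex:theweightisnotenough} exhibits, already for $m=n=4$, $p=2$, a potential map between nonzero pieces of the \emph{same} weight, which is only known to vanish a posteriori; so "strictness forces vanishing" does not close the argument, and the alternative of "explicit extension data" is not carried out. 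The paper avoids proving additivity directly: it uses subadditivity of de Rham cohomology to get $\rho^{tot}(\ol{O}_p)\le N_p$ from the composition series and \cite{euler}, then proves the combinatorial identity $b^{tot}(O_p)=N_p+N_{p-1}$ (Lemma~\ref{lem:btot-Op-Np-generic} and its analogues, valid when $n-p$ is even in the symmetric case), and the resulting squeeze
\begin{equation*}
b^{tot}(O_p)\;\le\; b^{BM}_{tot}(\ol{O}_p)+b^{BM}_{tot}(\ol{O}_{p-1})\;\le\;\rho^{tot}(\ol{O}_p)+\rho^{tot}(\ol{O}_{p-1})\;\le\; N_p+N_{p-1}
\end{equation*}
forces equality everywhere, yielding simultaneously the degeneration, the vanishing of the $d_i$, and the additivity you are missing (the symmetric case with $n-p$ odd is then covered because degeneration for $\ol{O}_p$ and $\ol{O}_{p-1}$ follows from the identity at every other $p$, with the semisimplicity result of \cite{lor-wal} handling equality in the de Rham bound there). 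Without either an independent computation of Borel--Moore homology or a proof of the additivity, your argument does not go through; supplying the Cartan--Borel computation of $H^*(O_p)$ and running the squeeze is the missing idea.
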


Notice that the degeneration of the spectral sequence \eqref{eq:CdR-sp-seq} is equivalent to the fact that the Euler--Poincar\'e polynomials in Theorem~\ref{thm:main2} are obtained from the generating functions in Theorem~\ref{thm:main} via the specialization~$w=q$. The expressions for the generating functions of \v Cech--de Rham numbers in Theorem~\ref{thm:main} illustrate the vanishing
\begin{equation}\label{eq:rho-vanishing}
\rho_{i,j}(\ol{O}_p) = 0\text{ for }i>j,
\end{equation}
which is established in general in \cite{RWZ}*{Proposition~2.1}. The inspiration for the study of \v Cech--de Rham numbers comes from the work of Lyubeznik \cite{lyubeznik}, where he defines using local cohomology groups a set of local invariants which are now usually referred to as \defi{Lyubeznik numbers}. There are many parallels between \v Cech--de Rham and Lyubeznik numbers, including the vanishing \eqref{eq:rho-vanishing}, and some are explored in \cite{RWZ}. In \cite{lor-rai} and \cite{perlman} the Lyubeznik numbers are computed for the determinantal varieties $\ol{O}_p$ in the spaces of general and skew-symmetric matrices, respectively, but they remain unknown in the case of symmetric matrices (see also the discussion in Sections~\ref{subsec:Lyub-comparison}, \ref{subsec:Lyub-comparison-skew}, and  \ref{subsec:Lyub-comparison-sym}).

\smallskip

As $O_p$ (and its closure) is a complex algebraic variety, the groups $H^i(O_p)$ and $H_i^{BM}(\ol{O}_p)$ are naturally endowed with mixed Hodge structures, by the work of Deligne (e.g. see \cite[Corollary 14.9]{mhs}). In general, a mixed Hodge structure $M$ carries an (increasing) weight $W_\bullet M$ and a (decreasing) Hodge $F^\bullet M$ filtration. The dimensions of the associated graded pieces are encoded by the \defi{Hodge numbers}
\[h^{p,q}(M) =\dim_{\C}{\op{Gr}}_{F}^{p}\,{\op{Gr}}_{p+q}^{W} \, M.\]

We say that the Hodge numbers of $M$ are \defi{concentrated on the diagonal} if $h^{p,q}(M)= 0$ whenever $p\neq q$. Note that in this case the weight filtration on $M$ determines all of its Hodge numbers, as for all $p$ we have $h^{p,p}(M) = \dim_\C \op{Gr}_{2p}^{W} M$, and further the vanishing $\op{Gr}_{2p+1}^{W} M=0$ must hold.

On the other hand, it follows from the work of Saito \cite{saito} that the local cohomology modules $H^i_{\ol{O}_p}(\mc{O}_X)$ naturally carry the structure of mixed Hodge modules. This has been studied in detail recently for the case (a) of general matrices by Perlman \cite{perlman2}. Based on his work, we compute the Hodge numbers of the singular cohomology of $O_p$ and Borel--Moore homology of $\ol{O}_p$ using the degeneration of the mixed Hodge module variant of the spectral sequence (\ref{eq:CdR-sp-seq}), together with Theorem \ref{thm:di=0} (a).

\begin{theorem}\label{thm:main-weight}
Let $X=\C^{m\times n}$ with $m\geq n$. The following bivariate generating functions record the weight filtrations on the mixed Hodge structures on $H_i^{BM}(\ol{O}_p)$ and $H^i(O_p)$, respectively:
\[\sum_{i,j\geq 0} \dim_{\C} \, \op{Gr}^W_j H^{BM}_i(\ol{O}_p)\cdot q^i \cdot w^j = \sum_{s=0}^p w^{p-s}\cdot (qw^{-1})^{2sm + (p-s)(p-s+2)}  \cdot {n \choose s}_{(qw^{-1})^{2}} \cdot {n-1-s \choose p-s}_{(qw^{-1})^2},\]

 \[\sum_{i,j\geq 0} \dim_{\C} \, \op{Gr}^W_j H^i(O_p)\cdot q^i \cdot w^j \, =\, {n \choose p}_{(qw)^2} \cdot \prod_{s=0}^{p-1} (1+q^{2m-2s-1} \cdot w^{2m-2s}).\]
Moreover, all of the corresponding Hodge numbers are concentrated on the diagonal.
\end{theorem}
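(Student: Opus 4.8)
We outline the argument. The plan is to upgrade the \v{C}ech--de Rham spectral sequence \eqref{eq:CdR-sp-seq} to a spectral sequence of mixed Hodge structures, to run it using Perlman's description of the mixed Hodge module structure on the local cohomology modules, and finally to pass from the orbit closures to the orbits themselves using Theorem~\ref{thm:di=0}(a).

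First I would observe that, by Saito's theory \cite{saito}, the complex $\mathbf{R}\Gamma_{\ol{O}_p}(\mc{O}_X)$ underlies a complex $\mathbf{R}\Gamma_{\ol{O}_p}(\mathbb{Q}_X^H[d_X]) \in D^b\,\op{MHM}(X)$ whose $j$-th cohomology object (for the natural $t$-structure) is a mixed Hodge module with underlying $\mc{D}_X$-module $H^j_{\ol{O}_p}(\mc{O}_X)$, up to the usual normalization. Pushing this complex forward along $\pi\colon X\to\{pt\}$ yields a complex of mixed Hodge structures whose cohomology recovers, via the identification between Borel--Moore and de Rham homology used in \eqref{eq:CdR-sp-seq} (see \cite{har-pol}), the groups $H_\bullet^{BM}(\ol{O}_p)$ together with their mixed Hodge structures. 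The hypercohomology spectral sequence attached to the canonical filtration on this complex is precisely \eqref{eq:CdR-sp-seq}, now in the category of mixed Hodge structures, with $E_2^{ij} = H^i_{dR}(H^j_{\ol{O}_p}(\mc{O}_X))$ carrying the mixed Hodge structure obtained by pushing the mixed Hodge module $H^j_{\ol{O}_p}(\mc{O}_X)$ to a point.

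Since all differentials of this spectral sequence are morphisms of mixed Hodge structures, and since by Theorem~\ref{thm:main} it degenerates at $E_2$ as a spectral sequence of vector spaces, it degenerates at $E_2$ as a spectral sequence of mixed Hodge structures. Because morphisms of mixed Hodge structures are strict for the weight filtration, the resulting filtration on $H_k^{BM}(\ol{O}_p)$ is by sub-mixed-Hodge-structures, and taking associated graded gives
\[ \op{Gr}^W_{\ell}\, H_k^{BM}(\ol{O}_p) \;=\; \bigoplus_{i+j=k} \op{Gr}^W_{\ell}\, H^{d_X-i}_{dR}(H^{d_X-j}_{\ol{O}_p}(\mc{O}_X)), \]
with the reindexing of \eqref{eq:def-CDR-numbers}. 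It then remains to compute the weight-graded de Rham cohomology of each local cohomology module, and here I would invoke Perlman \cite{perlman2}: the weight filtration on $H^j_{\ol{O}_p}(\mc{O}_X)$ has associated graded a direct sum of Tate twists of the simple pure Hodge modules $\op{IC}_{\ol{O}_s}$, with explicitly known multiplicities and weights. The de Rham cohomology of $\op{IC}_{\ol{O}_s}$ is the intersection cohomology $\op{IH}^\bullet(\ol{O}_s)$ of the determinantal variety $\ol{O}_s$, which is pure and Hodge--Tate with Poincar\'e polynomial a classical Gaussian binomial \cites{zelevinsky,pragacz}. Comparing the resulting dimension counts with the \v{C}ech--de Rham numbers of Theorem~\ref{thm:main} shows that no cancellation occurs (equivalently, the weight spectral sequence for $\pi_+$ of these modules also degenerates), and a $q$-binomial manipulation rearranges the sum $\sum_s (\text{Perlman data})\cdot \op{IH}^\bullet(\ol{O}_s)$ into the first generating function of the statement. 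For the orbits $O_p$ I would then use Theorem~\ref{thm:di=0}(a): the exact sequence \eqref{eq:les-BM-hom}, which is the Borel--Moore long exact sequence of the pair $(\ol{O}_p,\ol{O}_{p-1})$ with open stratum $O_p$, breaks into short exact sequences $0\to H_i^{BM}(\ol{O}_{p-1})\to H_i^{BM}(\ol{O}_p)\to H_i^{BM}(O_p)\to 0$ of mixed Hodge structures, so $\op{Gr}^W_\ell H_i^{BM}(O_p) = \op{Gr}^W_\ell H_i^{BM}(\ol{O}_p) - \op{Gr}^W_\ell H_i^{BM}(\ol{O}_{p-1})$; combining this with the isomorphism of mixed Hodge structures $H^i(O_p)\cong H_{2d_{O_p}-i}^{BM}(O_p)(-d_{O_p})$ coming from Poincar\'e duality on the smooth variety $O_p$, and subtracting the two generating functions from the previous step (after reindexing degrees and applying the weight shift of the Tate twist), yields the displayed product formula for $H^\bullet(O_p)$.

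Finally, the assertion that all Hodge numbers are concentrated on the diagonal is immediate from the construction: each $E_2$-term is a sum of Tate twists of pieces of $\op{IH}^\bullet(\ol{O}_s)$, each of which is Hodge--Tate, so the $E_2$-terms are Hodge--Tate, hence so are $H_\bullet^{BM}(\ol{O}_p)$ and therefore $H^\bullet(O_p)$. I expect the main obstacle to lie entirely in the third step: reconciling Perlman's weight-and-multiplicity data for $H^j_{\ol{O}_p}(\mc{O}_X)$ and the intersection-cohomology Poincar\'e polynomials of the $\ol{O}_s$ with the closed $q$-binomial form is a weight-refined version of the combinatorial identity already underlying Theorem~\ref{thm:main}, and that is where essentially all of the work sits. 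A secondary but genuine source of care is keeping the many normalizations consistent (Borel--Moore versus singular cohomology, the shifts entering $\mathbf{R}\Gamma$ and $\pi_+$, and the Tate twist in Poincar\'e duality) so that the weight variable $w$ in the statement acquires precisely the listed exponents.
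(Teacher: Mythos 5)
Your overall route coincides with the paper's: upgrade the \v Cech--de Rham spectral sequence to mixed Hodge structures (this is Proposition~\ref{prop:mhs}), use the vector-space degeneration from Theorem~\ref{thm:main-generic}(c) together with strictness to get degeneration as MHS, and feed in Perlman's weight data on the composition factors $D_s$ of $H^j_{\ol{O}_p}(\mc{O}_X)$, the absence of cancellation (which is exactly the equality in Theorem~\ref{thm:main-generic}(a)), and the purity and Hodge--Tate property of $IH^\bullet(\ol{O}_s)$ (proved in the paper via Zelevinsky's small resolution, Lemma~\ref{lem:interhom}). This yields the first generating function and the diagonal concentration essentially as in Theorem~\ref{thm:derham-local-mhs} and its use in the paper.

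The genuine gap is in your passage from $\ol{O}_p$ to $O_p$. Theorem~\ref{thm:di=0}(a) asserts that the maps $d_i\colon H_i^{BM}(\ol{O}_{p-1})\to H_i^{BM}(\ol{O}_p)$ in \eqref{eq:les-BM-hom} vanish, so these maps cannot be the injections of your claimed short exact sequences $0\to H_i^{BM}(\ol{O}_{p-1})\to H_i^{BM}(\ol{O}_p)\to H_i^{BM}(O_p)\to 0$; the vanishing splits the long exact sequence the other way, giving $0\to H_i^{BM}(\ol{O}_p)\to H^{2d_{O_p}-i}(O_p)(d_{O_p})\to H_{i-1}^{BM}(\ol{O}_{p-1})\to 0$ as mixed Hodge structures. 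Hence the weight-graded pieces of $H^\bullet(O_p)$ are the \emph{sum} of the contributions of $\ol{O}_p$ and of $\ol{O}_{p-1}$ shifted by one homological degree, not the difference you wrote: your subtraction formula already fails numerically, e.g.\ for $p=1$ it predicts total Betti number $2n-2$ for $O_1$ instead of the correct $2n$ from Theorem~\ref{thm:coho-orbit}(a). Once the sequences are corrected, the remaining content is the $q$-binomial manipulation (a Pascal-type identity together with the Gaussian binomial theorem) that turns the resulting sum into the closed product formula; this is where the paper's actual computation lies, rather than in the first formula as your proposal suggests.
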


These formulas yield Hodge-theoretic refinements to the ones in Theorem \ref{thm:main2} (a) and Theorem \ref{thm:HP-orbits} (a), respectively, which are recovered by evaluating $w\mapsto 1$. While this method of finding Hodge numbers works in principle also in the case of skew-symmetric and symmetric matrices, its implementation is contingent upon the determination of the weight filtration on the respective local cohomology modules, analogous to \cite{perlman2}. 

\medskip

\noindent{\bf Proof strategy.} We conclude this introduction with a summary of the strategy employed to prove the results presented here, the details of which are going to be explained in the rest of the paper.
\begin{enumerate}
    \item We describe the singular cohomology groups of the orbits $O_p$ using methods that go back to the classical work of Cartan and Borel, and obtain the formulas in Theorem~\ref{thm:HP-orbits}. This in particular gives an explicit formula for the total Betti numbers
    \begin{equation}\label{eq:btot-Op}
        b^{tot}(O_p) = \sum_{i\geq 0} h^i(O_p).
    \end{equation}

    \item Considering the total (Borel--Moore) Betti numbers,
    \begin{equation}\label{eq:btot-BM-Op-bar}
        b_{tot}^{BM}(\ol{O}_p) = \sum_{i\geq 0} h_i^{BM}(\ol{O}_{p}),
    \end{equation}
    we conclude using \eqref{eq:HOp-leq-hBM} that
    \begin{equation}\label{eq:btotOp-leq-btotBM}
        b^{tot}(O_p) \leq b_{tot}^{BM}(\ol{O}_p) + b_{tot}^{BM}(\ol{O}_{p-1}),
    \end{equation}
    with equality if and only if \eqref{eq:HOp-leq-hBM} is an equality for all $i$, which in turn is equivalent to the fact that the maps $d_i$ in the long exact sequence \eqref{eq:les-BM-hom} are zero for all $i$.
    
    \item  If we define the total \v Cech-de Rham numbers by
    \[\rho^{tot}(\ol{O}_p) = \sum_{i,j} \rho_{i,j}(\ol{O}_p) = \sum_{i,j}h^i_{dR}(H^j_{\ol{O}_p}(\mc{O}_X))\]
    then it follows from the spectral sequence \eqref{eq:CdR-sp-seq} that
    \begin{equation}\label{eq:btotBM-leq-btotdR}
        b_{tot}^{BM}(\ol{O}_p) \leq \rho^{tot}(\ol{O}_p)
    \end{equation}
    with equality if and only if the spectral sequence degenerates at the $E_2$ page.
    
    \item For each of the local cohomology modules $H^j_{\ol{O}_p}(\mc{O}_X)$, a composition series in the category of (equivariant) $\mc{D}_X$-modules is described in \cites{raicu-weyman,raicu-weyman-loccoh}, and for each of the simple composition factors, the corresponding de Rham cohomology groups are calculated in \cite{euler}. This provides an upper bound
    \[ \rho^{tot}(\ol{O}_p) \leq N_p\text{ for all }p,\]
    for certain explicit constants $N_p$, with equality if and only if the de Rham cohomology of each $H^j_{\ol{O}_p}(\mc{O}_X)$ is equal to the sum of the de Rham cohomology groups of its composition factors.
    
    \item We show that if $n-p$ is even then we have
    \begin{equation}\label{eq:btotOp=NpNpm1}
        b^{tot}(O_p) = N_p + N_{p-1},
    \end{equation}
    which implies that we must have equality throughout the chain of inequalities
    \[
    b^{tot}(O_p) \overset{\eqref{eq:btotOp-leq-btotBM}}{\leq} b_{tot}^{BM}(\ol{O}_p) + b_{tot}^{BM}(\ol{O}_{p-1})
    \overset{\eqref{eq:btotBM-leq-btotdR}}{\leq} \rho^{tot}(\ol{O}_p) + \rho^{tot}(\ol{O}_{p-1}) \leq N_p + N_{p-1}.
    \]
    In particular, we obtain the degeneration of the spectral sequence \eqref{eq:CdR-sp-seq} for all $p$, and get that the de Rham cohomology of local cohomology groups is the direct sum of the cohomologies of the composition factors, which is used to prove Theorem~\ref{thm:main}, and by specializing $w=q$, to prove Theorem~\ref{thm:main2}. Moreover, we get that \eqref{eq:HOp-leq-hBM} is an equality whenever $n-p$ is even, and in fact for all $p$ if $X=\bb{C}^{m\times n}$ or $X=\bw^2\bb{C}^n$, proving Theorem~\ref{thm:di=0}.
\end{enumerate}

\medskip

\noindent{\bf Organization.}  In Section~\ref{sec:prelim} we review some basic notation and techniques used to describe our computations, including aspects of de Rham cohomology, mixed Hodge structures, equivariant $\D$-modules, and representation theory of the general linear group. In Section \ref{sec:cohorb} we compute the singular cohomology groups of the orbits $O_p$. We then proceed to considering in more detail steps (2)--(5) of the strategy outlined above: for general matrices this is done in Section~\ref{sec:matrices}, for skew-symmetric matrices in Section~\ref{sec:skew}, and for symmetric matrices in Section~\ref{sec:DeRham-symmetric}. The results on mixed Hodge structures for the case of general matrices are proved in Section \ref{sec:mhs-det}. Finally, in Section \ref{sec:spec2} we discuss the degeneration of another spectral sequence, that is closely related to (\ref{eq:CdR-sp-seq}).

\section{Preliminaries}\label{sec:prelim}

Throughout this section $X$ is an irreducible smooth complex affine variety. We freely identify $\mc{O}_X$-modules with their global sections. We always work with left $\D$-modules.

\subsection{De Rham cohomology}\label{subsec:derham}
The (analytic) de Rham complex for $\D$-modules plays a fundamental role in the Riemann-Hilbert correspondence (for example, see \cite[Theorem 7.2.5]{htt}). In the special case when $M=\mc{O}_X$ is the structure sheaf, the celebrated comparison theorem of Grothendieck \cite{grothendieck} implies that the space $H^i_{dR}(\mc{O}_X)$ agrees with the (singular) cohomology group $H^i(X,\C)$. More generally, for an irreducible closed subvariety $Y\subset X$, the local cohomology group $H^{\codim_X Y}_Y(\mc{O}_X)$ has a unique simple submodule $\mc{L}(Y,X)$ (called the Brylinski-Kashiwara module \cite[Section 8]{bry-kash}) whose associated de Rham complex is the (middle perversity) intersection cohomology sheaf of $Y$. Hence, the de Rham cohomology groups of $\mc{L}(Y,X)$ agree with the intersection cohomology groups of $Y$ (for example, this follows from \cite[Theorem 7.1.1]{htt}).

In contrast with de Rham cohomology (see discussion after (\ref{eq:drcomp})), the Lyubeznik numbers mentioned in the Introduction can be understood as the (derived) restriction to the origin of the local cohomology modules. But pushforward of a module $M$ from an \defi{affine space} to the origin is the same as the restriction to the origin of its Fourier transform $\mc{F}(M)$ (see \cite[Proposition 3.2.6]{htt}):
\begin{equation}\label{eq:pushpull}
H^{k}(\pi_+ (M)) \cong H^{k}(Li^*\mc{F}(M)),
\end{equation}
where $\pi : X \to \{0\}$ is the projection and $i : \{0\} \to X$ the inclusion. While the latter uses only the $S=\C[x_1,\dots,x_{d_X}]$-module structure of $M$, the former uses only its $\C[\pd_1,\dots , \pd_{d_X}]$-structure, as can be seen also from the differentials in the de Rham complex
\begin{equation}\label{eq:diff-derham}
d^i: M\oo \Omega^i_X \to M\oo \Omega^{i+1}, \quad d^i(m \, dx_{j_1}\wedge\dots \wedge dx_{j_i})=\sum_{s=1}^{d_X} \pd_s (m) \, dx_s \wedge dx_{j_1}\wedge\dots \wedge dx_{j_i}.
\end{equation}
Hence, in a sense we should expect our calculations regarding the \v Cech--de Rham numbers to reflect features dual to those encoded by the Lyubeznik numbers. We explain in detail why this is indeed the case for our spaces of matrices in Sections \ref{subsec:Lyub-comparison}, \ref{subsec:Lyub-comparison-skew}, and  \ref{subsec:Lyub-comparison-sym}.

\subsection{Mixed hodge structure on de Rham cohomology}\label{sec:mhs}

As mentioned, de Rham cohomology can be interpreted as (derived) pushforward to a point. Thus, if $M$ is a mixed Hodge module, as developed by Saito \cite{saito}, then $H^i_{dR}(M)$ naturally carries a mixed Hodge structure for any $i$.

If $M$ is a mixed Hodge module, we denote by $M(k)$ its $k$th Tate twist, that shifts weights by $-2k$.

We denote by $\mc{O}_X^H$ the constant (trivial) mixed Hodge module on $X$, for which the graded pieces of the weight filtration give the $\D_X$-modules $\op{Gr}^W_{d_X} \mc{O}_X^H = \mc{O}_X$, and $\op{Gr}^W_k \mc{O}_X^H = 0, k \neq d_X$.

Let $Z\subsetneq X$ be a closed subvariety, $U=X\setminus Z$ the complement, and $\iota: U \to X$ the open embedding. Since 
\begin{equation}\label{eq:locpush}
H^1_Z(\mc{O}_X)\, \cong \, \iota_* \mc{O}_U /\mc{O}_X, \quad \mbox{and} \quad H^i_Z(\mc{O}_X) \, \cong \, \mathbf{R}^{i-1}\iota_*(\mc{O}_U), \quad \mbox{for all } i\geq 2,
\end{equation}
the local cohomology modules $H^j_Z(\mc{O}_X^H)$ naturally carry mixed Hodge module structures for all $j$ (cf. \cite{saito}).

In conclusion, de Rham cohomology of local cohomology $H^i_{dR}(H^j_Z(\mc{O}_X^H))$ acquires also a mixed Hodge structure for all $i,j$. Furthermore, Borel--Moore homology $H_i^{BM}(Z)$ carries mixed Hodge structure as well, for all $i$ \cite[Corollary 14.9]{mhs}. The following relates these mixed Hodge structures through the spectral sequence (\ref{eq:CdR-sp-seq}).

\begin{proposition}\label{prop:mhs}
Let $X=\C^d$ and $Z\subsetneq X$ a closed subvariety. The \v Cech--de Rham spectral sequence
\[H^i_{dR}(H^j_{Z}(\mc{O}_X^H)) \, \Longrightarrow \,\, H_{2d-i-j}^{BM}(Z)(-d)\]
is a spectral sequence of mixed Hodge modules.
\end{proposition}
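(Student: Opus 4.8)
\emph{Proof plan.} The plan is to recognize the \v Cech--de Rham spectral sequence \eqref{eq:CdR-sp-seq} as the hypercohomology spectral sequence obtained by pushing a single object of the derived category of mixed Hodge modules forward to a point. Write $\iota_Z\colon Z\hra X$ for the closed embedding, $\iota\colon U=X\setminus Z\hra X$ for the open complement, and $a\colon X\to\{pt\}$ for the structure morphism. In Saito's formalism \cite{saito}, the functors $\iota_{Z*},\iota_Z^!,\iota^*,\mathbf{R}\iota_*$ and $a_*$ lift the corresponding operations on (complexes of) regular holonomic $\D$-modules, and combining \eqref{eq:locpush} with the localization triangle
\[ \iota_{Z*}\iota_Z^!\,\mc{O}_X^H \;\lra\; \mc{O}_X^H \;\lra\; \mathbf{R}\iota_*\iota^*\mc{O}_X^H \;\overset{+1}{\lra} \]
in $D^b(\op{MHM}(X))$ shows that $\mathbf{R}\Gamma^H_Z(\mc{O}_X^H):=\iota_{Z*}\iota_Z^!\,\mc{O}_X^H$ is a lift of the complex $\mathbf{R}\Gamma_Z(\mc{O}_X)$ whose cohomology objects $\mc{H}^j\bigl(\mathbf{R}\Gamma^H_Z(\mc{O}_X^H)\bigr)$ are precisely the mixed Hodge modules $H^j_Z(\mc{O}_X^H)$ appearing in the statement.

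The next step is to apply $a_*\colon D^b(\op{MHM}(X))\to D^b(\op{MHM}(\{pt\}))$, where $\op{MHM}(\{pt\})$ is the abelian category of graded-polarizable mixed Hodge structures. For any triangulated functor between triangulated categories equipped with $t$-structures, the filtration of a bounded object by its truncations produces a convergent hypercohomology spectral sequence all of whose pages and differentials live in the heart of the target; applied to $a_*$ and $\mathbf{R}\Gamma^H_Z(\mc{O}_X^H)$, this yields a spectral sequence of mixed Hodge structures with $E_2$-term $\mc{H}^i\!\bigl(a_*\,H^j_Z(\mc{O}_X^H)\bigr)$ abutting to $\mc{H}^{i+j}\!\bigl(a_*\,\mathbf{R}\Gamma^H_Z(\mc{O}_X^H)\bigr)$. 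Since the functor forgetting the mixed Hodge structure to $\bb{C}$-vector spaces is exact and faithful, to see that this is, up to a Tate twist, the spectral sequence \eqref{eq:CdR-sp-seq}, it is enough to identify the underlying vector spaces on the $E_2$-page and on the abutment. Under the de Rham (Riemann--Hilbert) comparison, $a_*$ for mixed Hodge modules corresponds to the $\D$-module pushforward, so --- matching cohomological gradings according to the convention $H^i_{dR}(M)=H^{i-d}(a_+M)$ recalled after \eqref{eq:drcomp} --- the underlying space of the $E_2$-term becomes $H^i_{dR}(H^j_Z(\mc{O}_X))$, and that of the abutment becomes $H^{i+j}_{dR}(\mathbf{R}\Gamma_Z(\mc{O}_X))$, which by the Hartshorne--Polini identification used for \eqref{eq:CdR-sp-seq} is $H^{BM}_{2d-i-j}(Z)$.

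It remains to pin down the Tate twist and to check that the mixed Hodge structures thus obtained are the expected ones. The twist is produced by the identity $a^!\,\bb{Q}^H_{\{pt\}}\cong\mc{O}_X^H(d)[d]$ (valid since $a$ is smooth of relative dimension $d$): it gives $a_*\,\mathbf{R}\Gamma^H_Z(\mc{O}_X^H)\cong(a_Z)_*(a_Z)^!\,\bb{Q}^H_{\{pt\}}(-d)[-d]$ with $a_Z\colon Z\to\{pt\}$, and $(a_Z)_*(a_Z)^!\,\bb{Q}^H_{\{pt\}}$ computes, by Saito's theory, the Borel--Moore homology of $Z$ together with its canonical (Deligne) mixed Hodge structure, so the abutment acquires the mixed Hodge structure of $H^{BM}_{2d-i-j}(Z)(-d)$, as claimed; on the $E_2$-page the mixed Hodge structure is by construction the one on $H^i_{dR}(H^j_Z(\mc{O}_X^H))$ discussed in Section~\ref{sec:mhs}. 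I expect the main obstacle to be precisely this last compatibility bookkeeping: one must track the cohomological shifts and Tate twists carefully enough to be sure that the grading and weight conventions coming out of $a_*$ agree both with those of the de Rham cohomology of the local cohomology mixed Hodge modules and with the normalization used in the Hartshorne--Polini isomorphism behind \eqref{eq:CdR-sp-seq}. Once these conventions are fixed, the assertion that the whole spectral sequence lifts to $\op{MHM}(\{pt\})$ is formal, being built entirely out of Saito's six-functor formalism.
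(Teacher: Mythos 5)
Your proposal is correct, but it takes a genuinely different route from the paper. The paper never invokes the exceptional inverse image $\iota_Z^!$ or the dualizing complex: it works entirely with the open complement $U=X\setminus Z$, taking the Leray-type spectral sequence of mixed Hodge structures \eqref{eq:specu} for the factorization $\pi_U=\pi_X\circ\iota$, identifying $H^j_Z(\mc{O}_X^H)$ with $\mathbf{R}^{j-1}\iota_*\mc{O}_U^H$ for $j\geq 2$ via \eqref{eq:locpush} (with the edge cases $j=0,1$ handled by the short exact sequence $0\to\mc{O}_X^H\to\iota_*\mc{O}_U^H\to H^1_Z(\mc{O}_X^H)\to 0$), and then converting the abutment $H^{i+j}(U)$ into $H^{BM}_{2d-i-j}(Z)(-d)$ using Poincar\'e duality \eqref{eq:poinbm} on the smooth variety $U$ together with the Borel--Moore long exact sequence \eqref{eq:bmzu}. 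Your argument instead realizes the local cohomology complex intrinsically as $\iota_{Z*}\iota_Z^!\mc{O}_X^H$ via the localization triangle, applies the hypercohomology spectral sequence of $a_*$ in $D^b(\op{MHM})$, and identifies the abutment directly through $a^!\bb{Q}^H_{\{pt\}}\cong\mc{O}_X^H(d)[d]$ and the fact that $(a_Z)_*(a_Z)^!\bb{Q}^H_{\{pt\}}$ computes Borel--Moore homology; your shift and Tate-twist bookkeeping is consistent with the paper's normalization (with $\mc{O}_X^H$ pure of weight $d$ and $H^i_{dR}(M)=\mc{H}^{i-d}(a_*M)$, the abutment in total degree $i+j-d$ is indeed $H^{BM}_{2d-i-j}(Z)(-d)$). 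What your approach buys is a cleaner, more conceptual construction that produces the $Z$-indexed spectral sequence at once, with no reindexing through $U$; what it costs is heavier input from Saito's six-functor formalism, in particular $\iota_Z^!$ and the compatibility of the resulting mixed Hodge structure on $\mc{H}^{-k}((a_Z)_*(a_Z)^!\bb{Q}^H)$ with Deligne's on $H^{BM}_k(Z)$, whereas the paper extracts the abutment's Hodge structure from Deligne's on $H^*(U)$ using only the smoothness of $U$. Finally, your remark that matching the underlying $E_2$-terms and abutment suffices is the right level of precision here: the paper's applications only use dimension counts to transfer degeneration, and the paper's own proof operates at the same level of identification.
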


\begin{proof}
We reinterpret the spectral sequence using the identifications in (\ref{eq:locpush}) as follows. Let $\pi_U: U\to \{pt\}$ and $\pi_X: X \to \{pt\}$ denote maps to a point. The (higher) pushforward of $\mc{O}_U^H$ via $\pi_U$ yields cohomology of $U$, and factoring this through $\pi_U = \pi_X \circ \iota$ yields the following spectral sequence of mixed Hodge modules (cf. \cite[Section 14.1.3]{mhs})
\begin{equation}\label{eq:specu}
H^i_{dR}(\mathbf{R}^j \iota_* \mc{O}_U^H) \, \Longrightarrow \, H^{i+j}(U).
\end{equation}
Since $U$ is smooth, we have as mixed Hodge structures (see \cite[Corollary 6.26]{mhs})
\begin{equation}\label{eq:poinbm}
H^k(U) \, \cong \, H^{BM}_{2d-k}(U)(-d).
\end{equation}
By the long exact sequence in Borel--Moore homology corresponding to $\iota: U \to X$ (analogous to (\ref{eq:les-BM-hom})) and (\ref{eq:BM-X}), we obtain
\begin{equation}\label{eq:bmzu}
    H^{BM}_i(Z) \cong H_{i+1}^{BM}(U), \mbox{ for } i\leq 2d-2, \quad H^{BM}_{2d}(U)=\C.
\end{equation}
Note that $H_{dR}^0(\mc{O}_X)=\C$ and $H_{dR}^i(\mc{O}_X)=0$ when $i>0$. Further, from (\ref{eq:specu}) we get $H^0_{dR}(\iota_* \mc{O}_U)=\C$. Applying de Rham cohomology to the exact sequence of mixed Hodge modules
\[0 \to \mc{O}^H_X \to \iota_* \mc{O}_U^H \to H_Z^1(\mc{O}^H_X) \to 0,\] 
we obtain
\[H^i_{dR}(H_Z^1(\mc{O}_X^H)) \cong H^i_{dR}(\iota_* \mc{O}_U^H), \mbox{ for } i\geq 1, \quad H^0_{dR}(H_Z^1(\mc{O}_X^H)) = 0.\] 
Using this together with (\ref{eq:locpush}), (\ref{eq:poinbm}), (\ref{eq:bmzu}), we obtain the desired spectral sequence from the one in (\ref{eq:specu}).
\end{proof}

\subsection{Equivariant $\D$-modules}\label{sec:equivd}

Here we provide some background on equivariant $\D$-modules. For more details, see \cite{lor-wal}.

Let a connected algebraic group $G$ act on $X$. A (possibly infinite-dimensional) vector space $V$ is a rational $G-$module, if $V$ is equipped with a linear action of $G$, such that every $v\in V$ is contained in some finite-dimensional $G$-stable subspace $W\subset V$ with the map $G \to \GL(W)$ being a morphism of algebraic varieties.

We call $M$ a (strongly) $G$-\defi{equivariant} $\D$-module, if we have a $\D_{G\times X}$-isomorphism
\[
\tau\colon p^*M \rightarrow m^*M,
\]
where $p$ and $m$ are the projection and multiplication maps
\[
p\colon G\times X\to X,\qquad\qquad m\colon G\times X\to X
\]
respectively, and $\tau$ satisfies the usual compatibility conditions on $G\times G\times X$ (see \cite[Definition 11.5.2]{htt}).

Let $\lie$ be the Lie algebra of $G$. Differentiating the action of $G$ on $X$ yields a map $\lie \to \D_X$. Equivariance of a $\D$-module $M$ amounts to $M$ having a rational $G$-module structure such that differentiating the action of $G$ coincides with the action of $\lie$ induced from $\lie \to \D_X$.

We denote by $\op{mod}(\D_X)$ the category of coherent $\D_X$-modules, and its subcategory of coherent equivariant $\D$-modules by $\op{mod}_G(\D_X)$ which is abelian and stable under taking subquotients within $\op{mod}(\D_X)$.


For an equivariant $\D$-module $M$ and a (locally) closed $G$-stable subset $Y\subset X$, all local cohomology modules $H^i_Y(M)$ are equivariant.


%

\subsection{Representation theory of the general linear group}\label{sec:repgl}

We recall some facts on the representation theory of $\GL_n(\C)$. We write $\bb{Z}^n_{dom}$ for the set of \defi{dominant weights} in $\bb{Z}^n$, i.e. tuples $\ll=(\ll_1,\cdots,\ll_n)\in\bb{Z}^n$ with $\ll_1\geq\ll_2\geq\cdots\geq\ll_n$. When each $\ll_i\geq 0$ we identify $\ll$ with a \defi{partition} with (at most) $n$ parts, and write $\ll\in\bb{N}^n_{dom}$. For a partition, we write $\lambda\vdash k$ when  $|\ll|:=\ll_1+\cdots+\ll_n=k$, in which case we can associate its corresponding \defi{Young diagram} with $k$ boxes that consists of $\lambda_i$ boxes in the $i$th row. The \defi{Durfee size} of $\ll$ is the largest $i$ with the property $\ll_i\geq i$.
We write $\lambda'$ for the \defi{conjugate} partition of $\lambda$, where $\ll'_i$ counts the number of parts $\ll_j$ with $\ll_j\geq i$. We partially order $\bb{Z}^n_{dom}$ (and $\bb{N}^n_{dom}$) by declaring $\ll\geq \mu$ if $\ll_i\geq\mu_i$ for all $i=1,\cdots,n$. 
If $a\geq 0$ then we write $a\times b$ or $(b^a)$ for the sequence $(b,b,\cdots,b)$ where $b$ is repeated $a$ times.
Given a weight $\ll\in\bb{Z}^n$ we write for its dual
\[ \ll^{\vee} = (-\ll_n,-\ll_{n-1},\cdots,-\ll_1).\]

If $V$ is a vector space with $\dim(V)=n$ and $\ll\in\bb{Z}^n_{dom}$ we write $\SS_{\ll}V$ for the corresponding irreducible representation of $\GL(V)$ (or \defi{Schur functor}). Our conventions are such that if $\ll=(k,0,\cdots,0)$ then $\SS_{\ll}V = \Sym^k V$, and if $\ll=(1^r)$ then $\bb{S}_{\ll}V=\bw^r V$.

%

For $a\geq b\geq 0$ we define the \defi{Gaussian (or $q$-)binomial coefficient} ${a\choose b}_q$ to be the polynomial in $\bb{Z}[q]$ defined by
\[{a\choose b}_q = \frac{(1-q^a)\cdot(1-q^{a-1})\cdots (1-q^{a-b+1})}{(1-q^b)\cdot(1-q^{b-1})\cdots (1-q)}.\]
One significance of the $q$-binomial coefficients is that ${a\choose b}_{q^2}$ describes the Poincar\'e polynomial of the Grassmannian $\op{Grass}(b,a)$ of $b$-dimensional subspaces of $\bb{C}^a$. As such, the coefficient of $q^j$ in ${a\choose b}_q$ computes the number of Schubert classes of (co)dimension $j$, or equivalently the number of partitions $\ll$ of size $j$ contained inside the rectangular partition $(a-b)\times b$. We get
\begin{equation}\label{eq:qbin-genfun}
{a\choose b}_q \, = \, \sum_{\ll\leq (b^{a-b})} q^{|\ll|}.
\end{equation}

\section{Singular cohomology of matrix orbits}
\label{sec:cohorb}

In this section, we compute the singular cohomology rings of the orbits $O_p$ of general, symmetric and skew-symmetric matrices. Throughout, we work with singular cohomology over complex coefficients. The computation of cohomology of homogeneous spaces is a well-studied problem in topology that generated (e.g., see \cites{cartan, borel, baum, may}) and continues to generate (e.g. \cite{franz}) a lot of interest. 

In order to determine the cohomology of the matrix orbits $O_p$, we use the classical method of H. Cartan 
\cite{cartan}. Let $K$ be a compact connected Lie group, and $L\subset K$ a closed connected Lie group. We have an induced map $\rho: \, H^*(BK) \to H^*(BL)$ between the cohomology rings of their classifying spaces. The following isomorphism of algebras reduces the problem at hand to an algebraic one (cf. \cite{cartan})
\begin{equation}\label{eq:classifying}
H^*(K/L) \, \cong \, \op{Tor}_{H^*(BG)}(\C, H^*(BL) ).
\end{equation}
We recall Cartan's result in a form that is most convenient for our calculations (see \cite[Theorem 8]{terzic}).

Let $T_L\subset T$ be an inclusion of corresponding maximal tori, and consider the complexification of their Lie algebras $\mf{t}_L \subset \mf{t}$. Denote the Weyl groups by $W(L)$ and $W$, which act naturally on the polynomial rings $\bb{C}[\mf{t}_L]$ and $\bb{C}[\mf{t}]$, respectively. We think of these rings having coordinate functions in degree two. The map $\rho$ takes the explicit form
\begin{equation}\label{eq:rho}
 \rho: \bb{C}[\mf{t}]^W \to \bb{C}[\mf{t}_L]^{W(L)}.
 \end{equation}
 
 Let $n=\op{rank} K$ and $r = \op{rank} L$. By a well-known theorem of Hopf (see \cite[Theorem 6.26]{mimu-toda}) the cohomology of $K$ is an exterior algebra
\[H^*(K) \cong \bw(z_1,\dots z_n),\]
where the generators $z_i$ have odd degrees. Our computations are based on the following version of (\ref{eq:classifying}).

\begin{theorem}\label{thm:cartan}
Let $f_1,\dots, f_n$ be homogeneous generators of the algebra $\bb{C}[\mf{t}]^W$ with $\deg f_i = \deg z_i+1$. If $\rho(f_{r+1}), \dots, \rho(f_{n})$ belong to the ideal $(\rho(f_1),\dots, \rho(f_r))$, then we have an isomorphism of graded algebras
\[ H^*(K/L) \cong \left(\bb{C}[\mf{t}_L]^{W(L)}/(\rho(f_1),\dots, \rho(f_r)) \right)\oo \bw (z_{r+1},\dots, z_n).\]
In particular, the Hilbert--Poincar\'e polynomial of $H^*(K/L)$ is given by 
\[\dfrac{(1-q^{\deg f_1})\cdots (1-q^{\deg f_r})}{(1-q^{d_1})\cdots (1-q^{d_r})} \cdot (1+q^{\deg f_{r+1}-1}) \cdots (1+q^{\deg f_n-1}),\]
where $d_1,\dots, d_r$ are the degrees of the fundamental invariants in the polynomial ring $\bb{C}[\mf{t}_L]^{W(L)}$.
\end{theorem}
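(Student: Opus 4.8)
The plan is to derive this from Cartan's isomorphism \eqref{eq:classifying} by making the Tor computation explicit under the stated hypothesis. Recall that $H^*(BK) = \bb{C}[\mf{t}]^W$ and $H^*(BL) = \bb{C}[\mf{t}_L]^{W(L)}$, and that both are polynomial rings by Chevalley's theorem: $H^*(BK) = \bb{C}[f_1,\dots,f_n]$ with $\deg f_i$ the fundamental degrees of $W$, and $H^*(BL) = \bb{C}[g_1,\dots,g_r]$ where $r = \rk L$ and $\deg g_j = d_j$. The map in \eqref{eq:classifying} is the pullback $\rho\colon \bb{C}[\mf{t}]^W \to \bb{C}[\mf{t}_L]^{W(L)}$, and we must compute $\op{Tor}^{\bb{C}[f_1,\dots,f_n]}_\bullet(\bb{C}, \bb{C}[\mf{t}_L]^{W(L)})$ where $\bb{C}[\mf{t}_L]^{W(L)}$ is viewed as a module over $\bb{C}[\mf{t}]^W$ via $\rho$.

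First I would resolve $\bb{C}$ over $\bb{C}[\mf{t}]^W = \bb{C}[f_1,\dots,f_n]$ by the Koszul complex on the regular sequence $f_1,\dots,f_n$. Tensoring with $\bb{C}[\mf{t}_L]^{W(L)}$ yields the Koszul complex on the elements $\rho(f_1),\dots,\rho(f_n)$ inside $\bb{C}[\mf{t}_L]^{W(L)}$, whose homology is the desired Tor. The key structural input is the hypothesis that $\rho(f_{r+1}),\dots,\rho(f_n) \in (\rho(f_1),\dots,\rho(f_r))$. The next step is to observe that $\rho(f_1),\dots,\rho(f_r)$ form a regular sequence in $\bb{C}[\mf{t}_L]^{W(L)}$ — this holds because this polynomial ring has Krull dimension $r$ and the quotient $\bb{C}[\mf{t}_L]^{W(L)}/(\rho(f_1),\dots,\rho(f_r))$ is finite-dimensional over $\bb{C}$ (which one sees from the Poincaré series, or from the fact that $K/L$ being a finite-dimensional manifold forces $H^*(K/L)$ to be finite-dimensional, combined with a dimension count). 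Consequently the Koszul complex on $\rho(f_1),\dots,\rho(f_r)$ is a resolution, contributing only $H_0 = \bb{C}[\mf{t}_L]^{W(L)}/(\rho(f_1),\dots,\rho(f_r))$, while the remaining variables $f_{r+1},\dots,f_n$ — whose images lie in the ideal already generated — contribute exterior generators $z_{r+1},\dots,z_n$ in degree $\deg f_i - 1$. Splitting the Koszul complex as a tensor product over the two groups of variables and taking homology gives exactly the claimed graded algebra isomorphism; one checks the degree bookkeeping using $\deg z_i = \deg f_i - 1$ and the convention that $\bb{C}[\mf{t}]$ sits in cohomological degree two, so a polynomial generator of degree $\deg f_i$ (in the grading where coordinates have degree $2$) matches. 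The Hilbert--Poincaré polynomial formula then follows immediately: the quotient of polynomial rings contributes $\prod_{j=1}^r (1-q^{\deg f_j})/\prod_{j=1}^r(1-q^{d_j})$ and the exterior algebra contributes $\prod_{i=r+1}^n (1+q^{\deg f_i - 1})$.

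**The main obstacle** will be verifying that $\rho(f_1),\dots,\rho(f_r)$ is genuinely a regular sequence (equivalently, a homogeneous system of parameters) in $\bb{C}[\mf{t}_L]^{W(L)}$; without this the Koszul complex has higher homology and the simple tensor-product decomposition collapses. This is where the hypothesis is really used: if $\rho(f_{r+1}),\dots,\rho(f_n)$ were \emph{not} in the ideal $(\rho(f_1),\dots,\rho(f_r))$ one would need more generators to cut down to dimension zero, and the formula would fail. I would argue regularity either via a Hilbert-series computation — showing the quotient is Artinian — or by invoking that $\rho(f_1),\dots,\rho(f_n)$ together generate an ideal of finite colength (since $H^*(BK) \to H^*(BL)$ has the property that $H^*(BL)$ is a finite module over the image, as both $K$ and $L$ are compact with $BK, BL$ having polynomial cohomology and the fiber $K/L$ is finite-dimensional), and then showing the hypothesis lets us discard the redundant generators. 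Once regularity is in hand, the rest is the standard homological algebra of Koszul complexes and presents no difficulty; a clean way to organize it is to first prove the module-level statement $\op{Tor}_i = \bigwedge^i(\bb{C}^{n-r}) \otimes \big(\bb{C}[\mf{t}_L]^{W(L)}/(\rho(f_1),\dots,\rho(f_r))\big)$ and then upgrade to an algebra isomorphism by identifying the multiplicative structure on Tor (induced from the algebra structure on $H^*(K/L)$ via \eqref{eq:classifying}) with the evident one on the tensor product.
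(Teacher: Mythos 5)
Your proposal is correct, and it is essentially the argument behind the paper's treatment: the paper does not prove Theorem~\ref{thm:cartan} but quotes it from Cartan/Terzi\'c, and the standard proof of that cited result is exactly what you describe — compute $\Tor_{H^*(BK)}(\C,H^*(BL))$ from \eqref{eq:classifying} via the Koszul resolution, and use the hypothesis to see that $(\rho(f_1),\dots,\rho(f_r))=(\rho(f_1),\dots,\rho(f_n))$ has finite colength, so $\rho(f_1),\dots,\rho(f_r)$ is a homogeneous system of parameters, hence a regular sequence, in the $r$-dimensional polynomial ring $\C[\mf{t}_L]^{W(L)}$. The only step phrased too loosely is ``splitting the Koszul complex over the two groups of variables'': the factor on $\rho(f_{r+1}),\dots,\rho(f_n)$ has nonzero differential, so one should first write $\rho(f_j)=\sum_{i\le r}a_{ji}\rho(f_i)$ and replace the Koszul generators $e_j$ by $e_j-\sum_{i\le r}a_{ji}e_i$ for $j>r$, which makes the last $n-r$ generators cycles and exhibits the Koszul DG algebra as the tensor product of the Koszul complex on the regular sequence with an exterior algebra with zero differential; then K\"unneth, together with the multiplicativity of the Koszul resolution, gives both the module and the algebra statements, as your concluding reorganization anticipates.
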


Recall the facts about the cohomology of the Grassmannian $\op{Grass}(p,n)$ described in Section \ref{sec:repgl} (also, see (\ref{eq:grasspres}) in the proof below for an explicit presentation). We now proceed with determining the cohomology of the orbits. While for our subsequent applications we only use the Hilbert--Poincar\'e polynomials, for completeness we outline the argument yielding the explicit ring structure. In fact, for the symmetric case (c) this approach is necessary, since the generic stabilizers of $O_p$ ($p>0$) are disconnected. We note that in case (a), a description for $H^*(O_p)$ has been also obtained recently in \cite{zach}*{Proposition~3.6}. For the standard notions and identities involving symmetric functions, we refer the reader to \cite{mac}.

\begin{theorem}\label{thm:coho-orbit}
We have the following isomorphisms of graded algebras, and respective Hilbert--Poincar\'e polynomials (with $\deg z_i = 2i-1$):
\begin{itemize}
\item[(a)] When $X=\C^{m\times n}$:
\[H^*(O_p) \cong H^*(\op{Grass}(p,n)) \oo \bw (z_{m-p+1},\dots, z_{m}), \]
\[{n \choose p}_{q^2} \cdot (1+q^{2m-2p+1})\cdot (1+q^{2m -2p+3}) \cdots (1+q^{2m-1}).\]
In particular, we have that
\[b^{tot}(O_p) = {n\choose p}\cdot 2^p.\]
\item[(b)] When $X=\bw^2 \C^n$, with $m=\lfloor n/2\rfloor$, and $\epsilon = n-2m$, then
\[H^*(O_p) \cong \C[h_1,\dots,h_p]/(h_{m-p+1}, \dots, h_{m}) \oo \bw (z_{n+\epsilon-2p+1},z_{n+\epsilon-2p+3}, \dots, z_{n+\epsilon -1}),\]
\[{m \choose p}_{q^4} \cdot (1+q^{2(n+\epsilon)-4p+1})\cdot (1+q^{2(n+\epsilon)-4p+5}) \cdots (1+q^{2(n+\epsilon)-3}).\]
Here $h_i$ stands for the $i$th complete homogeneous symmetric polynomial in $p$ variables and $\deg h_i = 4i$. In particular, we have that
\[b^{tot}(O_p) = {m\choose p}\cdot 2^p.\]

\item[(c)] Suppose that $X= \Sym^2 \C^n$, let $m=\lfloor n/2\rfloor$, and let $\epsilon = n-2m$. If $p=2r$ then
\[H^*(O_p) \cong \C[h_1,\dots,h_r]/(h_{m-r+1}, \dots, h_{m}) \oo \bw (z_{n+\epsilon-p+1},z_{n+\epsilon-p+3}, \dots, z_{n+\epsilon -1}),\]
\[{m \choose r}_{q^4} \cdot (1+q^{2(n+\epsilon)-4r+1})\cdot (1+q^{2(n+\epsilon)-4r+5}) \cdots (1+q^{2(n+\epsilon)-3}),\]
and in particular we have that
\[b^{tot}(O_p) = {m\choose r} \cdot 2^r.\]
If $p=2r+1$ then 
\[H^*(O_p) \cong \C[h_1,\dots,h_r]/(h_{m-r+\epsilon}, \dots, h_{m-1+\epsilon}) \oo \bw (z_{2m-2r+1},z_{2m-2r+3}, \dots, z_{2m-1}, z_{n}),\]
\[{m-1+\epsilon \choose r}_{q^4} \cdot [(1+q^{4m-4r+1})\cdot (1+q^{4m -4r+5}) \cdots (1+q^{4m-3})] \cdot (1+q^{2n-1}),\]
and in particular we have that
\[b^{tot}(O_p) = {m\choose r} \cdot 2^{r+1}.\]
Here  $h_i$ stands for the $i$th complete homogeneous symmetric polynomial in $r$ variables and $\deg h_i = 4i$. 
\end{itemize}
\end{theorem}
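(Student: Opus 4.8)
The plan is to realize each orbit $O_p$ as a homogeneous space $K/L$ for a suitable compact Lie group and closed subgroup, and then apply Cartan's Theorem~\ref{thm:cartan}. First I would identify $O_p$ up to homotopy with an orbit of the maximal compact subgroup acting on $X$; concretely, in case (a) the group $\GL_m(\C)\times\GL_n(\C)$ acts on $\C^{m\times n}$ with $O_p$ a single orbit, so deformation-retracting to the maximal compact $U(m)\times U(n)$ gives $O_p\simeq (U(m)\times U(n))/L_p$, where $L_p$ is the stabilizer of a fixed rank-$p$ matrix (say the partial identity). A direct computation identifies $L_p$: the stabilizer consists of block matrices preserving the rank-$p$ coordinate subspace together with a diagonal $U(p)$ identification, so $L_p\cong U(n-p)\times (U(m-p)\times U(p))$ up to the appropriate diagonal embedding. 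Hence $\op{rank}K = m+n$ and $\op{rank}L_p = n$, so there are $m$ exterior generators $z_i$ surviving. For cases (b) and (c) one does the analogous bookkeeping with $\GL_n(\C)$ acting on $\bw^2\C^n$ (resp.\ $\Sym^2\C^n$) by congruence $A\mapsto gAg^{t}$: the stabilizer of a maximal-rank standard form is an orthogonal or symplectic group, and of a rank-$2p$ (resp.\ rank-$p$) form is a product of such with a general linear factor, which again after passing to maximal compacts yields the ranks recorded in the statement. In case (c) with $p$ odd the stabilizer is \emph{disconnected} (it involves $O(\cdot)$ rather than $SO(\cdot)$), which is exactly why one needs the full ring computation rather than just a fibration argument — this is flagged in the text.

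Second, I would compute the map $\rho:\C[\mf{t}]^W\to\C[\mf{t}_L]^{W(L)}$ of \eqref{eq:rho} explicitly. For $K=U(m)\times U(n)$, $\C[\mf{t}]^W$ is a polynomial ring on the elementary symmetric functions in two sets of variables $x_1,\dots,x_m$ and $y_1,\dots,y_n$ (degrees $2,4,\dots,2m$ and $2,4,\dots,2n$), while $\C[\mf{t}_L]^{W(L)}$ for $L_p$ is generated by symmetric functions in the variables split as: the first $p$ of each block get \emph{identified} under the diagonal $U(p)$, the remaining $m-p$ of the $x$'s and $n-p$ of the $y$'s stay free. So $\rho$ sends the $x$-generators to symmetric functions in $\{u_1,\dots,u_p\}\cup\{x_{p+1},\dots,x_m\}$ and the $y$-generators to symmetric functions in $\{u_1,\dots,u_p\}\cup\{y_{p+1},\dots,y_n\}$. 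The key algebraic point is then to check the hypothesis of Theorem~\ref{thm:cartan}: that the images $\rho(f_{r+1}),\dots,\rho(f_n)$ lie in the ideal generated by $\rho(f_1),\dots,\rho(f_r)$. For the general-matrix case one chooses the $r=n$ relations to be the $y$-block invariants together with enough $x$-block invariants, and observes that the resulting quotient ring $\C[\mf{t}_L]^{W(L)}/(\rho(f_1),\dots,\rho(f_n))$ is precisely the standard presentation of $H^*(\op{Grass}(p,n))$ — i.e.\ symmetric functions in $u_1,\dots,u_p$ modulo the relations $h_{n-p+1}=\cdots=h_n=0$ coming from forcing the complementary Chern classes to vanish; see the presentation \eqref{eq:grasspres}. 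The remaining $m$ generators $f$ then restrict to elements already in this ideal, yielding the claimed tensor decomposition $H^*(\op{Grass}(p,n))\oo\bw(z_{m-p+1},\dots,z_m)$, and the Hilbert--Poincaré polynomial follows from the product formula in Theorem~\ref{thm:cartan} by plugging in $\deg f_i$ and the fundamental degrees $d_i$. For (b) and (c) the same template applies with $q^2$ replaced by $q^4$ (since the relevant Weyl-group invariants of type $C$/$D$ live in degrees $4,8,\dots$), and with the extra $(1+q^{2n-1})$ factor in the odd symmetric case accounting for the component group / the extra exterior generator coming from the disconnectedness.

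Third, once the ring structure is in hand, the three total Betti number statements $b^{tot}(O_p)$ are immediate specializations at $q=1$: each truncated $q$-binomial becomes an ordinary binomial and each $(1+q^{\text{odd}})$ factor contributes a $2$, giving $\binom{n}{p}2^p$, $\binom{m}{p}2^p$, $\binom{m}{r}2^r$, and $\binom{m}{r}2^{r+1}$ respectively.

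\medskip

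The main obstacle I anticipate is the congruence-action cases (b) and (c), and in particular the disconnected stabilizer when $p$ is odd in the symmetric case. Verifying the ideal-membership hypothesis of Theorem~\ref{thm:cartan} requires a careful identification of the restriction of type-$C$ or type-$D$ invariants (sums of squares, Pfaffian-type invariants) to the torus of a product subgroup, and the parity of $n$ ($\epsilon = n-2m$) enters the degree bookkeeping in a way that must be tracked precisely to land on the exact generator degrees $n+\epsilon-2p+1,\dots,n+\epsilon-1$ etc.\ in the statement. For the disconnected case one must supplement Cartan's theorem with an analysis of the action of the component group $\pi_0(L_p)\cong\Z/2$ on the ``naive'' answer for the identity component, and check that the invariants under this action are exactly the claimed algebra — equivalently, that the extra class in degree $2n-1$ is the one that survives. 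The general-matrix case (a), by contrast, should be essentially a textbook application (and indeed it matches \cite{zach}*{Proposition~3.6}), so the real work is in adapting the method to the orthogonal and symplectic stabilizers.
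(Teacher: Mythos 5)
Your overall strategy coincides with the paper's: identify $O_p$ up to homotopy with a compact homogeneous space $K/L'$ (via Mostow), compute the restriction map $\rho$ on Weyl-group invariants, verify the ideal-membership hypothesis of Theorem~\ref{thm:cartan}, and deal with disconnected stabilizers by taking invariants of the component group. There is, however, a genuine error in your plan for case (c): you assert that the stabilizer is disconnected only when $p$ is odd. In fact for every $p\geq 1$ the compact stabilizer is $O(p,\bb{R})\times U(n-p)$, hence disconnected, and the parity of $p$ only governs how the component group acts on the Cartan-theorem answer for the identity component $SO(p,\bb{R})\times U(n-p)$: for $p$ odd the action is trivial (so nothing changes, and in particular the degree $2n-1$ class survives), whereas for $p=2r$ even the nonidentity component negates the Pfaffian-type generator $q=x_1\cdots x_r$ of $\bb{C}[\mf{t}_L]^{W(L)}$, and one must pass to the $\bb{Z}_2$-invariant subalgebra, which replaces $q$ (degree $2r$) by a generator of degree $4r$. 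If you omit this step for even $p$, as your plan does, you end up computing $H^*\bigl(U(n)/(SO(p,\bb{R})\times U(n-p))\bigr)$, which is strictly larger than $H^*(O_p)$ and does not match the stated Poincar\'e polynomial. So the component-group analysis is required for both parities, and the nontrivial invariant-taking occurs precisely in the even case, not the odd one.

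Separately, your rank bookkeeping in case (a) is off, though it is self-correcting since you quote the correct final answer: $\op{rank}L_p=p+(m-p)+(n-p)=m+n-p$, not $n$, so the number of surviving exterior generators is $p$ (namely $z_{m-p+1},\dots,z_m$), not $m$; correspondingly the ideal in Theorem~\ref{thm:cartan} is generated by the $m+n-p$ elements $\rho(x_1),\dots,\rho(x_{m-p}),\rho(y_1),\dots,\rho(y_n)$, and the membership check concerns the remaining $p$ elements $\rho(x_{m-p+1}),\dots,\rho(x_m)$ (this is exactly how the paper argues, reducing the quotient to the presentation \eqref{eq:grasspres} of $H^*(\op{Grass}(p,n))$). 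As written these counts contradict the tensor decomposition you state, so they need to be fixed, but they reflect slips in arithmetic rather than a flaw in the method.
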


\begin{proof}
We consider first part (a).  It is easy to see that we have
\[O_p \cong (\GL_m(\bb{C}) \times \GL_n(\bb{C})) / H,\]
where $H$ denotes the stabilizer of $\begin{bmatrix} I_p & 0 \\ 0 & 0 \end{bmatrix}$, equal to the subgroup of pairs of matrices of the form 
\[\left(\begin{bmatrix} A & B \\ 0 & C \end{bmatrix},\begin{bmatrix} A & 0\\ D & E \end{bmatrix}\right), \;\mbox{ with } A\in \GL_p(\bb{C}), B \in \bb{C}^{p\times (m-p)}, C\in \GL_{m-p}(\bb{C}), D \in \bb{C}^{(n-p) \times p}, E\in \GL_{n-p}(\bb{C}). \]
Since the product of unitary groups $U(m) \times U(n)$ is the maximal compact subgroup of $\GL_m(\bb{C}) \times \GL_n(\bb{C})$, we have by \cite[Theorem 3.1]{mostow} that $O_p$ has the same homotopy type as $K/L$, where
\[K =U(m) \times U(n), \, \mbox{ and } \, L= U(p)\times U(m-p) \times U(n-p).\]
The rings of invariants from (\ref{eq:rho}) are polynomial rings, generated by elementary symmetric polynomials:
\[\bb{C}[\mf{t}]^W  = \bb{C}[x_1,\dots,x_m, y_1,\dots, y_n], \quad \mbox{ and } \quad \bb{C}[\mf{t}_L]^{W(L)} = \bb{C}[a_1,\dots,a_p, b_1, \dots , b_{m-p}, c_1, \dots, c_{n-p}],\]
where $\deg x_i = \deg y_i = \deg a_i = \deg b_i = \deg c_i = 2i$ (below we allow $i=0$). The map $\rho$ from (\ref{eq:rho}) is given by (compare \cite[Theorem 5.8]{mimu-toda})
\[ \rho(x_k) = \sum_{i+j=k} a_i b_j, \qquad \rho(y_k)=\sum_{i+j=k} a_i c_j. \]
Let $I$ denote the ideal generated by all the elements $\rho(x_i)$ ($1\leq i \leq m$) and $\rho(y_j)$ ($1\leq j\leq n$). Using successively that $\rho(x_k)-\rho(y_k) \in I$ for $k=1,\dots, m-p$, we see that $b_k-c_k \in I$ ($1\leq k \leq n-p$) and $b_j \in I$ ($n-p+1\leq j \leq m-p$). Therefore, we have 
\[I=\big(\rho(x_1),\dots,\rho(x_{m-p}), \rho(y_1),\dots, \rho(y_{n})\big).\]
Hence, by Theorem \ref{thm:cartan} we obtain 
\[H^*(K/L) \cong  R \oo \bw (z_{m-p+1},\dots, z_m),\]
with 
\begin{equation}\label{eq:grasspres}
R=\bb{C}[\mf{t}_L]^{W(L)}/I\cong \bb{C}[a_1,\dots,a_p, c_1, \dots , c_{n-p}]/(\rho(y_1),\dots, \rho(y_n)),
\end{equation}
which is a well-known presentation of $H^*(\op{Grass}(p,n))$ (see \cite[Theorem 6.9]{mimu-toda}).

Now we turn to part (b). By working with the representative 
$$\begin{bmatrix} 0 & I_p & 0\\ -I_p & 0 & 0 \\ 0 & 0 & 0 \end{bmatrix} \in O_p,$$ 
we see as above by \cite[Theorem 3.1]{mostow} that $O_p$ has the same homotopy type as $K/L$, with $K=U(n),$ and $L=\op{Sp}(p) \times U(n-2p)$, where $\op{Sp}(p)=\op{Sp}(2p,\C) \, \cap \, U(2p)$ is the compact symplectic Lie group.

We let the Cartan subalgebra $\mf{t}$ be the set of diagonal matrices 
$\op{diag}(a_1,\dots,a_p, a_{p+1},\dots, a_{2p}, b_1,\dots, b_{n-2p})$ 
(where $a_i, b_i \in \bb{C}$), while the Cartan subalgebra $\mf{t}_L \subset \mf{t}$ to be $\op{diag}(a_1,\dots,a_p, -a_1,\dots -a_p, b_1,\dots, b_{n-2p})$. The corresponding Weyl groups are $W=S_n$ and $W_L=(S_p\ltimes \bb{Z}_2^p) \times S_{n-2p}$, acting in the obvious way -- the symmetric group by permutations, and $\bb{Z}_2 = \bb{Z}/2\bb{Z}$ by sign changes. 
Let $x_i, y_i$ be the coordinate functions corresponding to $a_i, b_i$, respectively, and for $k \in \bb{Z}_{>0}$ consider the power sum polynomials 
\[p_k = \sum_{i=1}^{2p} x_i^k + \sum_{j=1}^{n-2p} y_j^k, \qquad q_{k} = \sum_{i=1}^p x_i^k, \qquad r_{k}= \sum_{i=1}^{n-2p} y_i^k.\]
The respective rings of invariants are polynomial rings generated by
\[\bb{C}[\mf{t}]^W  = \bb{C}[p_1,\dots, p_n], \quad \mbox{ and } \quad \bb{C}[\mf{t}_L]^{W(L)} = \bb{C}[q_2, q_4, \dots, q_{2p}, r_{1}, r_{2},\dots, r_{n-2p}].\]
The map $\rho$ from (\ref{eq:rho}) is given by (where $1\leq k\leq n$)
\[ \rho(p_k) = 2q_k + r_k, \mbox{\, for } k \mbox{ even, \quad and \quad} \rho(p_k)=r_k, \mbox{ for } k \mbox{ odd}.\]
Since $\bb{C}[y_1,\dots, y_{n-2p}]^{S_{n-2p}} = \bb{C}[r_1,\dots,r_{n-2p}]$, we see that $\rho(p_k) \in (r_1,r_3,\dots, r_{n-2p-1+\epsilon})$ for all odd $k$. By Theorem \ref{thm:cartan}, we obtain
\[H^*(O_p) \cong \left( \bb{C}[q_2,\dots, q_{2p}, r_1, \dots, r_{n-2p}]/I\right) \oo \bigwedge (z_{n+\epsilon-2p+1},z_{n+\epsilon-2p+3},\dots, z_{n+\epsilon-1}),\]
where $I=(r_1,r_3,\dots, r_{n-2p-1+\epsilon}, 2q_2 + r_2, 2q_4 +r_4, \dots, 2q_{n-\epsilon}+r_{n-\epsilon})$ and $\deg z_i = 2i-1$. For $k\in \bb{N}$, let $e_k$ (resp. $h_k$) denote the $k$th elementary (resp. complete) symmetric polynomial in the variables $y_1,\dots,y_{n-2k}$ (resp. in $x_1^2,\dots, x_p^2$), so that if $k>n-2p$ then $e_k = 0$. We claim that for all $0\leq k \leq m$ we have
\begin{equation}\label{eq:elemcomp}
e_{2k} - h_k \in I, \quad \mbox{ and } \quad e_{2k+1} \in I.
\end{equation}
The latter part follows readily by induction using the Girard--Newton identities and the fact that $r_i \in I$ when $1\leq i\leq n-2p$ is odd.

Now we prove that $e_{2k} - h_k \in I$, again by induction, the case $k=0$ being trivial. We have the following equalities modulo $I$, again using the Girard--Newton identities and that $e_{2i+1} \in I$:
\[2k \cdot e_{2k} \equiv \sum_{i=1}^{2k} (-1)^{i-1} e_{2k-i} \cdot r_i \equiv  \sum_{i=1}^k - h_{k-i} \cdot (-2 q_{2i}) \equiv 2k \cdot h_k.\]
This proves the first claim in (\ref{eq:elemcomp}) as well, which now implies part (b) since
\[\bb{C}[q_2,\dots, q_{2p}, r_1, \dots, r_{n-2p}]/I = \bb{C}[h_1,\dots, h_p, e_1, \dots, e_{n-2p}]/I\cong \bb{C}[h_1,\dots, h_p]/(h_{m-p+1},\dots, h_m).\]

Now consider part (c). By choosing the representative $$\begin{bmatrix} I_{p} & 0 \\ 0 & 0 \end{bmatrix} \in O_p,$$ 
we see as before that that $O_p$ has the same homotopy type as $K/L'$, with
\[K=U(n), \, \mbox{ and } \, L'=O(p,\bb{R}) \times U(n-p).\]
We first use Theorem \ref{thm:cartan} in order to compute the cohomology ring of $K/L$, where $L=L^{\prime 0} =SO(p,\bb{R}) \times U(n-p)$ is the connected component of $L'$ containing the identity. 

Assume first that $p$ is even. For $a,a'\in \bb{C}$, denote by $R(a,a')$ the $2\times 2$ matrix $\,1/2 \cdot \begin{bmatrix} (a+a') & a-a' \\ a'-a & (a+a')\end{bmatrix}$. 
Let $\mf{t}$ be the Cartan subalgebra $\op{diag}(R(a_1,a_{r+1}), R(a_2,a_{r+2}), \dots, R(a_r,a_{2r}), b_1,\dots, b_{n-p})$ formed of block diagonal matrices, where $a_i,b_i \in \bb{C}$. The Weyl group $W=S_n$ acts by permuting the entries $a_1,\dots, a_{p}, b_1,\dots, b_{n-p}$ in the usual way. We choose $\mf{t}_L \subset \mf{t}$ to be $\op{diag}(R(a_1,-a_1), R(a_2,-a_2), \dots, R(a_r, -a_{r}), b_1,\dots, b_{n-p})$. Here the first factor of the Weyl group $W_L = (S_r\ltimes \bb{Z}_2^{r-1}) \times S_{n-p}$
acts on $a_1,\dots, a_r$ by permutations and an even number of sign changes. Let $x_i, y_i$ be the coordinate functions corresponding to $a_i, b_i$, respectively, and for $k \in \bb{Z}_{>0}$ consider polynomials 
\[p_k = \sum_{i=1}^{p} x_i^k + \sum_{j=1}^{n-p} y_j^k, \qquad q_{k} = \sum_{i=1}^r x_i^k,  \qquad q=\prod_{i=1}^{r} x_i, \qquad r_{k}= \sum_{i=1}^{n-p} y_i^k.\]
The respective rings of invariants are polynomial rings generated by
\[\bb{C}[\mf{t}]^W  = \bb{C}[p_1,\dots, p_n], \quad \mbox{ and } \quad \bb{C}[\mf{t}_L]^{W(L)} = \bb{C}[q_2, q_4, \dots, q_{p-2}, q, r_{1}, r_{2},\dots, r_{n-p}].\]
The map $\rho$ from (\ref{eq:rho}) is given by (where $1\leq k\leq n$)
\[ \rho(p_k) = 2q_k + r_k, \mbox{\, for } k \mbox{ even, \quad and \quad} \rho(p_k)=r_k, \mbox{ for } k \mbox{ odd}.\]
As in case (b), we obtain
\[H^*(K/L) \cong \left( \bb{C}[q_2,\dots, q_{p-2},q, r_1, \dots, r_{n-p}]/I\right) \oo \bigwedge (z_{n+\epsilon-p+1},z_{n+\epsilon-p+3},\dots, z_{n+\epsilon-1}),\]
where $I=(r_1,r_3,\dots, r_{n-p-1+\epsilon}, 2q_2 + r_2, 2q_4 +r_4, \dots, 2q_{n-\epsilon}+r_{n-\epsilon})$ and $\deg z_i = 2i-1$. Now the action of $-1\in\bb{Z}_2 \cong O(p,\bb{R})/SO(p,\bb{R})$ leaves $q_{2k}$ (and
$z_i,r_j$) invariant, but sends $q$ to $-q$. Hence, we have
\[H^*(O_p) \cong H^*(K/L)^{\bb{Z}_2} \cong \left( \bb{C}[q_2,\dots, q_{p-2},q_{p}, r_1, \dots, r_{n-p}]/I'\right) \oo \bigwedge (z_{n+\epsilon-p+1},z_{n+\epsilon-p+3},\dots, z_{n+\epsilon-1}),\]
with $I'$ having the same generators as those given for $I$. The rest of the proof follows as (\ref{eq:elemcomp}) in case (b).

Lastly, we consider case (c) with $p$ odd. We use similar notation as in the even case. Choose $\mf{t}$ to be $\op{diag}(R(a_1,a_{r+1}), R(a_2,a_{r+2}), \dots, R(a_r,a_{2r}), b_0, b_1,\dots, b_{n-p})$. Then $W=S_n$ acts by permuting the entries $a_1,\dots, a_{2r},b_0, b_1,\dots, b_{n-p}$. Choose $\mf{t}_L \subset \mf{t}$ to be $\op{diag}(R(a_1,-a_1), R(a_2,-a_2), \dots, R(a_r, -a_{r}), 0, b_1,\dots, b_{n-p})$. The first factor of $W_L = (S_r\ltimes \bb{Z}_2^{r}) \times S_{n-p}$
acts on $a_1,\dots, a_r$ by permutations and sign changes. Consider 
\[p_k = \sum_{i=1}^{2r} x_i^k + \sum_{j=0}^{n-p} y_j^k, \qquad q_{k} = \sum_{i=1}^r x_i^k,  \qquad r_{k}= \sum_{i=1}^{n-p} y_i^k.\]
The rings of invariants are
\[\bb{C}[\mf{t}]^W  = \bb{C}[p_1,\dots, p_n], \quad \mbox{ and } \quad \bb{C}[\mf{t}_L]^{W(L)} = \bb{C}[q_2, q_4, \dots, q_{2r}, r_{1}, r_{2},\dots, r_{n-p}].\]
The map $\rho$ from (\ref{eq:rho}) is given by (where $1\leq k\leq n$)
\[ \rho(p_k) = 2q_k + r_k, \mbox{\, for } k \mbox{ even, \quad and \quad} \rho(p_k)=r_k, \mbox{ for } k \mbox{ odd}.\]
As in case (b), we obtain
\[H^*(K/L) \cong \left( \bb{C}[q_2,\dots, q_{2r}, r_1, \dots, r_{n-p}]/I\right) \oo (z_{2m-2r+1},z_{2m-2r+3}, \dots, z_{2m-1}, z_{n}),\]
where $I=(r_1,r_3,\dots, r_{n-p-\epsilon}, 2q_2 + r_2, 2q_4 +r_4, \dots, 2q_{n-2+\epsilon}+r_{n-2+\epsilon})$ and $\deg z_i = 2i-1$ (here we used also the fact that $\rho(p_n)\in I$ since $y_0 \mapsto 0$). Now the action of $-1\in\bb{Z}_2 \cong O(p,\bb{R})/SO(p,\bb{R})$ leaves all $q_{2k}, z_i, r_j$ invariant. Thus, $H^*(O_p) \cong H^*(K/L)^{\bb{Z}_2} = H^*(K/L)$, and the rest of the proof follows again as in case (b).
\end{proof}

\section{The case of $m\times n$ matrices} \label{sec:matrices}

In this section we let $X=\bb{C}^{m\times n}$ denote the space of $m\times n$ complex matrices, endowed with the natural action of $G=\GL_m\times\GL_n$ via row and column operations.  The coordinate ring $S$ of $X$ can be identified with the polynomial ring $S = \bb{C}[x_{ij}]$, where $1\leq i\leq m$ and $1\leq j\leq n$. We assume that $m\geq n$, so the orbits of this action are the sets $O_p$ consisting of matrices of rank $p$, for $p=0,\cdots,n$, and their closures are given by
\[ \ol{O}_p = \bigcup_{i=0}^p O_i.\] 
The goal of this section is to prove the following result, which combined with \eqref{eq:qbin-pos} implies part a) of Theorems~\ref{thm:main2},~\ref{thm:di=0}, and~\ref{thm:main}.

\begin{theorem}\label{thm:main-generic}
Suppose that $0\leq p<n\leq m$.
\begin{itemize} 
\item[(a)] The generating function for de Rham cohomology of local cohomology modules is
\[ \sum_{i,j\geq 0} h^i_{dR}(H^j_{\ol{O}_p}(S))\cdot q^i \cdot w^j = \sum_{s=0}^p q^{(m-s)\cdot(n-s)} \cdot {n \choose s}_{q^2} \cdot w^{(n-p)^2+(n-s)\cdot(m-n)} \cdot {n-1-s \choose p-s}_{w^2}.\]

\item[(b)] The Hilbert--Poincar\'e polynomial for the Borel--Moore homology of the orbit closures is given by
\[\sum_{i\geq 0} h_i^{BM}(\ol{O}_p) \cdot q^i \, = \, \sum_{s=0}^p q^{2sm + (p-s)(p-s+2)} \cdot {n \choose s}_{q^{2}} \cdot {n-1-s \choose p-s}_{q^{2}}.\]

\item[(c)] The \v Cech--de Rham spectral sequence \eqref{eq:CdR-sp-seq} degenerates at the $E_2$ page, and the maps $d_i$ in \eqref{eq:les-BM-hom} vanish.
\end{itemize}
\end{theorem}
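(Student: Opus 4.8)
The plan is to obtain all three statements at once via the squeezing argument of steps~(1)--(5) in the Introduction, specialized to $X=\bb{C}^{m\times n}$. The geometric input is Theorem~\ref{thm:coho-orbit}(a), which gives $b^{tot}(O_p)=\binom{n}{p}\cdot 2^p$ for $0\le p\le n$. The algebraic input is the structure of the local cohomology modules: since every orbit of $\GL_m\times\GL_n$ on $X$ has connected isotropy group (as the computation in the proof of Theorem~\ref{thm:coho-orbit}(a) shows), the simple $G$-equivariant $\D_X$-modules are exactly the intersection cohomology modules $D_s=\mc{L}(\ol{O}_s,X)$, $0\le s\le n$, and one records the classes $[H^j_{\ol{O}_p}(S)]=\sum_{s=0}^p m^j_{s,p}\cdot[D_s]$ in the Grothendieck group, with the multiplicities $m^j_{s,p}$ made explicit in \cites{raicu-weyman,raicu-weyman-loccoh}. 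I package these as $P_{s,p}(w):=\sum_j m^j_{s,p}\,w^j=w^{(n-p)^2+(n-s)(m-n)}\cdot\binom{n-1-s}{p-s}_{w^2}$, and I recall from \cite{euler} the de Rham Poincar\'e polynomial of the simple factors, $\iota_s(q):=\sum_i h^i_{dR}(D_s)\,q^i=q^{(m-s)(n-s)}\cdot\binom{n}{s}_{q^2}$ (its coefficients are the intersection cohomology Betti numbers of $\ol{O}_s$, placed in the cohomological degrees $\codim_X\ol{O}_s+2\bb{Z}_{\ge 0}$). Write $F_p(q,w):=\sum_{s=0}^p\iota_s(q)\,P_{s,p}(w)$ and $N_p:=F_p(1,1)=\sum_{s=0}^p\binom{n}{s}\binom{n-1-s}{p-s}$.

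Since the de Rham functor carries a short exact sequence of $\D_X$-modules to a short exact sequence of complexes, climbing a composition series of $H^j_{\ol{O}_p}(S)$ shows that $\sum_i h^i_{dR}(H^j_{\ol{O}_p}(S))\,q^i$ is bounded coefficientwise by $\sum_s m^j_{s,p}\,\iota_s(q)$, with equality (in every $q$-degree) if and only if all the connecting homomorphisms in the associated long exact sequences vanish. Summing over $j$ gives $\rho^{tot}(\ol{O}_p)\le N_p$, and I then run the chain
\[b^{tot}(O_p)\ \overset{\eqref{eq:btotOp-leq-btotBM}}{\le}\ b_{tot}^{BM}(\ol{O}_p)+b_{tot}^{BM}(\ol{O}_{p-1})\ \overset{\eqref{eq:btotBM-leq-btotdR}}{\le}\ \rho^{tot}(\ol{O}_p)+\rho^{tot}(\ol{O}_{p-1})\ \le\ N_p+N_{p-1}.\]
The key is the combinatorial identity $N_p+N_{p-1}=\binom{n}{p}\cdot 2^p$: by Pascal's rule $\binom{n-1-s}{p-s}+\binom{n-1-s}{p-1-s}=\binom{n-s}{p-s}$, then $\binom{n}{s}\binom{n-s}{p-s}=\binom{n}{p}\binom{p}{s}$, and summing over $s$ gives $\binom{n}{p}2^p=b^{tot}(O_p)$. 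As the two ends of the chain coincide and each of $b_{tot}^{BM}(\ol{O}_q)\le\rho^{tot}(\ol{O}_q)\le N_q$ holds index by index, equality of the sums forces equality at both $q=p-1$ and $q=p$. Because $n-p$ and $n-(p+1)$ have opposite parities, for any $0\le p<n$ there is $q\in\{p,p+1\}$ with $n-q$ even (the case $q=n$ being the trivial degeneration \eqref{eq:CdR-for-X}), so the identity is available at some such $q$; since $p\in\{q-1,q\}$, we get $b_{tot}^{BM}(\ol{O}_p)=\rho^{tot}(\ol{O}_p)=N_p$ for every $p$.

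From here the three parts follow. The equality $\rho^{tot}(\ol{O}_p)=N_p$ says every connecting homomorphism above vanishes, hence $\sum_{i,j}h^i_{dR}(H^j_{\ol{O}_p}(S))\,q^iw^j=F_p(q,w)$, which is (a). The equality $b_{tot}^{BM}(\ol{O}_p)=\rho^{tot}(\ol{O}_p)$ is exactly the degeneration of \eqref{eq:CdR-sp-seq} at $E_2$; together with the equality in \eqref{eq:btotOp-leq-btotBM}, i.e.\ sharpness of \eqref{eq:HOp-leq-hBM} for all $i$, it forces the maps $d_i$ of \eqref{eq:les-BM-hom} to vanish, giving (c). For (b), degeneration gives $H^{BM}_k(\ol{O}_p)=\bigoplus_{i+j=2mn-k}H^i_{dR}(H^j_{\ol{O}_p}(S))$, so the Borel--Moore Poincar\'e polynomial equals $q^{2mn}\cdot F_p(q^{-1},q^{-1})$, which \eqref{eq:qbin-pos} rewrites as the stated $\sum_s q^{2sm+(p-s)(p-s+2)}\binom{n}{s}_{q^2}\binom{n-1-s}{p-s}_{q^2}$.

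The delicate point is not any individual step -- each link of the chain is a formal subadditivity -- but the conspiracy that makes the chain rigid: one needs the composition multiplicities of \cites{raicu-weyman,raicu-weyman-loccoh} and the de Rham Betti numbers of \cite{euler} in a form that plugs cleanly into the $q$-binomial identities, and then the identity $N_p+N_{p-1}=\binom{n}{p}2^p$ -- whose proof is one line, but whose mere availability pins everything down -- does the rest. The remaining care is bidegree bookkeeping: matching the local cohomology degree $j$ carrying each $D_s$ (the $w$-exponent $(n-p)^2+(n-s)(m-n)$, which for $s=p$ equals $\codim_X\ol{O}_p$ as it must, since $D_p\subset H^{\codim_X\ol{O}_p}_{\ol{O}_p}(S)$) against the cohomological grading of $\iota_s$, and keeping the reindexing in \eqref{eq:def-CDR-numbers} straight when translating among $h^i_{dR}(H^j_{\ol{O}_p}(S))$, the \v Cech--de Rham numbers $\rho_{i,j}(\ol{O}_p)$, and $H^{BM}_i(\ol{O}_p)$.
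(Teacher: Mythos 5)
Your proposal is correct and follows essentially the same route as the paper: the subadditivity bound $\rho^{tot}(\ol{O}_p)\le N_p$ coming from the composition series and the de Rham cohomology of the simples, the identity $N_p+N_{p-1}=\binom{n}{p}2^p=b^{tot}(O_p)$ (the paper's Lemma~\ref{lem:btot-Op-Np-generic}), and the resulting rigidity of the chain of inequalities, followed by the same $w=q$, $q\mapsto q^{-1}$ specialization via \eqref{eq:qbin-pos} for part (b). The only deviation is your parity detour ("choose $q\in\{p,p+1\}$ with $n-q$ even"), which is imported from the Introduction's outline for the symmetric case and is redundant here, since your own one-line identity holds for every $p<n$; it is harmless and does not affect the argument.
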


The restriction $p<n$ in Theorem~\ref{thm:main-generic} is made in order to avoid the trivial case $p=n$ when $\ol{O}_n=X$ (see \eqref{eq:BM-X} and \eqref{eq:CdR-for-X}). To prove Theorem~\ref{thm:main-generic} we follow closely the outline described in the Introduction, and explain the details in Section~\ref{subsec:proof-generic}. We then consider in Section~\ref{subsec:Lyub-comparison} some further consequences of Theorem~\ref{thm:main-generic} and discuss the relationship with Lyubeznik numbers.

\subsection{The proof of Theorem~\ref{thm:main-generic}}\label{subsec:proof-generic}

The simple objects in $\opmod_{G}(\D_X)$ are $D_0,\cdots,D_n$, where $D_p=\mc{L}(\ol{O}_p, X)$ denotes the intersection homology $\D_X$-module corresponding to the trivial local system on the orbit $O_p$ (see \cite{raicu-dmods}*{Theorem~2.9}). By \cite{euler}*{Theorem 4.1} (see also \cite{zelevinsky}*{Section~3.3}), the generating function for the de Rham cohomology of the simples $D_s$ is given by

\begin{equation}\label{eq:deRham-Dp}
\sum_{i\geq 0} h^i_{dR}(D_s) \cdot q^i =  {n \choose s}_{q^2} \cdot q^{(m-s)\cdot(n-s)}.
\end{equation}
Moreover, by \cite{raicu-weyman-loccoh}*{(1.3)}, we have for $p<n$ the formal identity
\begin{equation}\label{eq:compos-loccoh-generic}
\sum_{j\geq 0} [H^j_{\ol{O}_p}(S)]\cdot w^j = \sum_{s=0}^p [D_s]\cdot w^{(n-p)^2+(n-s)\cdot(m-n)} \cdot {n-1-s \choose p-s}_{w^2}.
\end{equation}
describing the simple $\mc{D}_X$-composition factors of the local cohomology modules $H^j_{\ol{O}_p}(S)$. Combining \eqref{eq:deRham-Dp} with \eqref{eq:compos-loccoh-generic}, and using the fact that de Rham cohomology is subadditive in short exact sequences, we obtain the inequality
\begin{equation}\label{eq:ineq-gen-fun-DR-loccoh-generic} \sum_{i,j\geq 0} h^i_{dR}(H^j_{\ol{O}_p}(S))\cdot q^i \cdot w^j \leq \sum_{s=0}^p q^{(m-s)\cdot(n-s)} \cdot {n \choose s}_{q^2} \cdot w^{(n-p)^2+(n-s)\cdot(m-n)} \cdot {n-1-s \choose p-s}_{w^2}.
\end{equation}

\begin{remark}\label{rem:semisimple}
 In the case when $m>n$, the category $\opmod_{G}(\D_X)$ is semi-simple by \cite[Theorem 5.4]{lor-wal}, hence \eqref{eq:compos-loccoh-generic} encodes a direct sum decomposition of local cohomology modules into a sum of simples. Taking de Rham cohomology is therefore additive, and we get that \eqref{eq:ineq-gen-fun-DR-loccoh-generic} is an equality. This argument however fails in the case $m=n$ when the groups $H^j_{\ol{O}_p}(S)$ are no longer direct sums of simple modules (see Section~\ref{subsec:Lyub-comparison} below).
\end{remark}

We define $N_p$, $p<n$, to be the specialization of the right side of \eqref{eq:ineq-gen-fun-DR-loccoh-generic} to $q=w=1$, namely
\begin{equation}\label{eq:Np-generic}
    N_p = \sum_{s=0}^p {n\choose s}\cdot {n-1-s\choose p-s},
\end{equation}
and observe that specializing the left side of \eqref{eq:ineq-gen-fun-DR-loccoh-generic} to $q=w=1$ we get
\[ \rho^{tot}(\ol{O}_p) \leq N_p.\]

\begin{lemma}\label{lem:btot-Op-Np-generic}
 We have for $p<n$ that \eqref{eq:btotOp=NpNpm1} holds.
\end{lemma}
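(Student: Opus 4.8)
The plan is to deduce Lemma~\ref{lem:btot-Op-Np-generic} directly from Theorem~\ref{thm:coho-orbit}(a), which supplies the closed formula $b^{tot}(O_p)={n\choose p}\cdot 2^p$, together with the definition \eqref{eq:Np-generic} of $N_p$ and the convention $N_{-1}=0$. Under these substitutions the claimed identity \eqref{eq:btotOp=NpNpm1} becomes
\[ {n\choose p}\cdot 2^p \;=\; \sum_{s=0}^p {n\choose s}\cdot {n-1-s\choose p-s} \;+\; \sum_{s=0}^{p-1} {n\choose s}\cdot {n-1-s\choose p-1-s}, \]
a purely combinatorial statement that I would verify by a short binomial computation; the case $p=0$ is the trivial equality $1=1$.

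The computation itself proceeds in three moves. First, extend the second sum to $0\le s\le p$ — the new term $s=p$ equals ${n\choose p}\cdot{n-1-p\choose -1}=0$ — and combine the two sums termwise to get $\sum_{s=0}^p {n\choose s}\big({n-1-s\choose p-s}+{n-1-s\choose p-1-s}\big)$. Second, apply Pascal's rule to collapse the bracket to ${n-s\choose p-s}$, leaving $\sum_{s=0}^p {n\choose s}{n-s\choose p-s}$. Third, use the subset-of-a-subset identity ${n\choose s}{n-s\choose p-s}={n\choose p}{p\choose s}$ (both sides equal $n!/(s!\,(p-s)!\,(n-p)!)$) to pull out the constant ${n\choose p}$ and finish with $\sum_{s=0}^p{p\choose s}=2^p$. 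Equivalently, after the first two moves one recognizes the sum as counting flags $S\subseteq T\subseteq[n]$ with $|T|=p$, which is visibly ${n\choose p}\cdot 2^p$.

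I do not expect any genuine obstacle here: this is essentially a two-line identity, and the only points that need a word of care are the boundary conventions for the truncated sums and the edge case $p=0$. Note that the hypothesis $p<n$ plays no role in the combinatorial identity — it holds for all $0\le p\le n$ with the standard conventions — and is carried along here only because the surrounding discussion concerns the proper orbit closures $\ol{O}_p\subsetneq X$.
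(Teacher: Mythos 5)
Your proposal is correct and matches the paper's own argument: the paper likewise combines $N_p+N_{p-1}$ termwise, applies Pascal's rule to get $\sum_s {n\choose s}{n-s\choose p-s}$, rewrites this as ${n\choose p}\sum_s{p\choose s}={n\choose p}\cdot 2^p$, and concludes via Theorem~\ref{thm:coho-orbit}(a). The only differences are cosmetic (your explicit handling of the boundary term and the flag-counting interpretation).
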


\begin{proof}
 Since $p<n$, we have that 
 \[
 \begin{aligned}
 N_p+N_{p-1} &= \sum_{s=0}^p {n\choose s}\cdot {n-1-s\choose p-s} + \sum_{s=0}^{p-1} {n\choose s}\cdot {n-1-s \choose p-1-s} \\
 &=\sum_{s=0}^p {n\choose s}\cdot {n-s\choose p-s} = \sum_{s=0}^p {n\choose p}\cdot{p\choose s} = {n\choose p}\cdot 2^p.
 \end{aligned}\]
 The desired conclusion now follows from Theorem~\ref{thm:coho-orbit}(a).
\end{proof}

As explained in the Introduction, the equality \eqref{eq:btotOp=NpNpm1} implies the degeneration of the spectral sequence \eqref{eq:CdR-sp-seq} (for both $\ol{O}_p$ and $\ol{O}_{p-1}$), and the vanishing of the maps $d_i$ in \eqref{eq:les-BM-hom}, hence Theorem~\ref{thm:main-generic}(c) holds. Moreover, \eqref{eq:btotOp=NpNpm1} also implies that \eqref{eq:ineq-gen-fun-DR-loccoh-generic} is an equality, proving Theorem~\ref{thm:main-generic}(a). The degeneration of \eqref{eq:CdR-sp-seq}, together with the fact that $d_X=mn$, implies that
\[\sum_{k\geq 0} h_k^{BM}(\ol{O}_p) \cdot q^{2mn-k} = \sum_{i,j\geq 0} h^i_{dR}(H^j_{\ol{O}_p}(S))\cdot q^{i+j}\]
is obtained by specializing the equality in part (a) to $w=q$. Making the change of variable $q\to q^{-1}$ and multiplying by $q^{2mn}$ we get
\[\sum_{k\geq 0} h_k^{BM}(\ol{O}_p) \cdot q^{k} = \sum_{s=0}^p q^{2mn-(m-s)\cdot(n-s)-(n-p)^2-(n-s)\cdot(m-n)} \cdot {n \choose s}_{q^{-2}} \cdot {n-1-s \choose p-s}_{q^{-2}},\]
and Theorem~\ref{thm:main-generic}(b) now follows using the identity (\ref{eq:qbin-pos}).

\subsection{Comparison with Lyubeznik numbers}\label{subsec:Lyub-comparison}
As explained in Remark \ref{rem:semisimple}, when $m\neq n$, the category $\op{mod}_G(\D_X)$ is semisimple, yielding to a simpler argument for obtaining the \v Cech--de Rham numbers. Since $\mc{F}(D_p)\cong D_{n-p}$ (e.g. see \cite[Remark 1.5]{raicu-dmods}), by (\ref{eq:pushpull}) the \v Cech--de Rham numbers are determined completely by the Lyubeznik numbers, and vice-versa (up to relabeling). 

From now on we assume that $m=n$, when the situation is more interesting since $\op{mod}_G(\D_X)$ is no longer semisimple. Nevertheless, when $p<n$ the $\D$-module $H^j_{\ol{O}_p}(S)$ can be written as a direct sum of the indecomposable $\D$-modules $Q_0, Q_1, \dots, Q_p$ \cite[Theorem 1.6]{lor-rai}, with
\[Q_n= S_{\det}, \qquad Q_p = \dfrac{S_{\det}}{\langle \det^{p-n+1} \rangle_{\D}} \quad (0\leq p \leq n-1),\]
where $S_{\det}$ denotes the localization of $S$ at the determinant, and $\langle \det^{p-n+1} \rangle_{\D}$ is the $\D$-submodule generated by ${\det}^{p-n+1}$. Note that $Q_0=D_0$ and for $1 \leq p \leq n$, we have the short exact sequences (cf. \cite{lor-rai})
\begin{equation}\label{eq:ses-Qp}
 0 \lra D_p \lra Q_p \lra Q_{p-1} \lra 0.
\end{equation}
The short exact sequence (\ref{eq:ses-Qp}) is not split in the category of $\D$-modules, but it is split in the category of rational $G$-representations. We obtain a decomposition of $Q_p$ as a $G$-representation
\[ Q_p = \bigoplus_{s=0}^p Q_p^s,\]
where $Q_p^s \simeq D_s$. As a rational $G$-representation, the decomposition of $D_p$ is given in \cite[Theorem~6.1]{raicu-weyman}, \cite[Main Theorem(1)]{raicu-weyman-loccoh}, or \cite[Theorem~5.1]{raicu-survey}. We fix our conventions as follows. Let $V_1,V_2$ be vector spaces, $\dim(V_i)=n$, let $S = \Sym(V_1\oo V_2)$, and identify $X=\Spec(S) = V_1^{\vee} \oo V_2^{\vee}\cong \bb{C}^{m\times n}$ with the action of the group $G= \GL(V_1) \times \GL(V_2)$ as before. Then
\begin{equation}\label{eq:decomp-Dp}
 D_p = \bigoplus_{\ll \in W(p)}\bb{S}_{\ll} V_1 \oo \bb{S}_{\ll} V_2,
\end{equation}
where 
\[ W(p) \, = \, \{\ll\in\bb{Z}^n_{\dom} : \ll_p\geq p-n\geq \ll_{p+1}\}.\]

\begin{lemma}\label{lem:Qps}
 If $\pd\in V_1^{\vee}\oo V_2^{\vee}$ is a derivation and $z\in Q_p^s$ then $\pd(z)\in Q_p^s$. 
\end{lemma}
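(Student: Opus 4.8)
The first step is to observe that the decomposition $Q_p=\bigoplus_{s=0}^p Q_p^s$ is canonical, so that the statement is well-posed. By \eqref{eq:decomp-Dp} we have $D_s=\bigoplus_{\ll\in W(s)}\SS_\ll V_1\oo\SS_\ll V_2$, and the index sets $W(s)$, $0\le s\le p$, are pairwise disjoint: if $\ll\in W(s)\cap W(s')$ with $s<s'$ then $\ll_{s'}\ge s'-n>s-n\ge\ll_{s+1}\ge\ll_{s'}$, a contradiction. Hence $Q_p$ is multiplicity-free as a rational $G$-module, each $Q_p^s$ is precisely the sum of the isotypic components of $Q_p$ whose types are indexed by $W(s)$, and every $G$-submodule of $Q_p$ is uniquely determined by the set of isotypes it meets (equivalently, by its $G$-character). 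With this in hand, the plan is to prove the two inclusions
\[\pd\cdot Q_p^s\subseteq Q_p^{s-1}\oplus Q_p^s\qquad\text{and}\qquad\pd\cdot Q_p^s\subseteq\textstyle\bigoplus_{j=s}^p Q_p^j\]
(with the convention $Q_p^{-1}=0$), and then to intersect them to obtain $\pd\cdot Q_p^s\subseteq Q_p^s$, which is the claim.

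For the first inclusion I would argue with representation theory. The action map $(V_1^\vee\oo V_2^\vee)\oo Q_p\to Q_p$, $\pd\oo z\mapsto\pd(z)$, is $G$-equivariant. Since $V_1^\vee\cong\SS_{(0^{n-1},-1)}V_1$, the Pieri rule gives $\SS_\ll V_1\oo V_1^\vee=\bigoplus\SS_{\ll-e_i}V_1$, the sum over those $i$ for which $\ll-e_i$ is dominant (here $e_i$ is the $i$th standard basis vector of $\bb{Z}^n$), and similarly for $V_2$. Because every irreducible constituent of $Q_p$ has the shape $\SS_\tau V_1\oo\SS_\tau V_2$ with the \emph{same} partition on both factors, it follows that $\pd$ sends the isotypic component of $Q_p$ of type $\ll$ into $\bigoplus_i(Q_p)_{\ll-e_i}$, the sum over $i$ with $\ll-e_i$ dominant. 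So the first inclusion reduces to the combinatorial statement: if $\ll\in W(s)$ and $\ll-e_i$ is dominant, then $\ll-e_i\in W(s)\cup W(s-1)$. I would prove this by a short case analysis on the position of $i$ relative to $s$; the only delicate case is $i=s$ with $\ll_s$ equal to its minimal admissible value $s-n$, where one checks directly that $\ll-e_s$ lands in $W(s-1)$.

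For the second inclusion I would iterate the short exact sequences \eqref{eq:ses-Qp}: composing the surjections $Q_p\onto Q_{p-1}\onto\cdots\onto Q_{s-1}$ yields a surjection of $\D_X$-modules whose kernel $N$ has composition factors $D_p,D_{p-1},\dots,D_s$. Being a $G$-submodule of the multiplicity-free module $Q_p$, $N$ is determined by its character, so $N=\bigoplus_{j=s}^p Q_p^j$; and being a $\D_X$-submodule it is stable under $\pd$, so $Q_p^s\subseteq N$ gives $\pd\cdot Q_p^s\subseteq N$. Intersecting the two inclusions gives $\pd\cdot Q_p^s\subseteq(Q_p^{s-1}\oplus Q_p^s)\cap\bigl(\bigoplus_{j=s}^p Q_p^j\bigr)=Q_p^s$, as desired. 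The main obstacle is the combinatorial claim in the middle paragraph: it is elementary, but the boundary case shows that $\ll-e_s\in W(s-1)$ can genuinely occur, so the combination with the module-theoretic second inclusion (which forbids $\pd$ from lowering $s$) is essential — neither ingredient suffices alone. Everything else is formal.
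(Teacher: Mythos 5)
Your proposal is correct and follows essentially the same route as the paper's proof: Pieri's rule restricts the constituents of $\pd(z)$ to types $\ll-e_i$, the case analysis on $i$ versus $s$ (with the boundary case $\ll_s=s-n$) is exactly the paper's, and your second inclusion is the paper's observation that $z\in\ker(Q_p\onto Q_{s-1})$, a $\D$-submodule, forces $t\geq s$. The only differences are organizational — you package the argument as two inclusions intersected rather than deriving a contradiction in the boundary case, and you make explicit the multiplicity-freeness that the paper leaves implicit.
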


\begin{proof}
 Without loss of generality, we may assume that $z$ belongs to an isotypic component $\bb{S}_{\ll}V_1 \oo \bb{S}_{\ll}V_2$, where $\ll\in W(s)$. Since $\pd\in V_1^{\vee}\oo V_2^{\vee}$, it follows from Pieri's rule \cite[Corollary 2.3.5]{weyman} and the fact that $Q_p$ is closed under the action of $\pd$, that
 \[ \pd(z) = \sum_{\nu} z_{\nu},\]
 where for each $\nu$ we have that $z_{\nu}\in \bb{S}_{\nu}V_1 \oo \bb{S}_{\nu}V_2$ for $\nu\in W(t)$ with $t\leq p$, and $\nu$ is obtained from $\ll$ by removing one box, i.e. there exists an index $1\leq r\leq n$ such that
 \[ \nu_i = \ll_i\mbox{ for }i\neq r,\mbox{ and } \nu_r=\ll_r-1.\]
 Since $z\in\ker(Q_p \onto Q_{s-1})$ it follows that $s\leq t\leq p$. There are two possibilities:
 \begin{itemize}
  \item $r\neq s$. Then $\nu_s=\ll_s\geq s-n$, and $\nu_{s+1}\leq\ll_{s+1}\leq s-n$. So $\nu\in W(s)$, which means $z_{\nu}\in Q_p^s$.
  \item $r=s$. If $\ll_s>s-n$ then $\nu_s\geq s-n$ and $z_{\nu}\in Q_p^s$. If $\ll_s=s-n$ then $\nu_s=s-n-1$, so $\nu\not\in W(s)$. It follows that $\nu\in W(t)$ for some $t>s$. However, this implies that
  \[ t-n\leq \nu_t \leq \nu_s = s-n-1,\]
  which yields $t < s$, a contradiction. \qedhere
 \end{itemize}
\end{proof}

In other words, the result above shows that the non-split exact sequence of $\D$-modules (\ref{eq:ses-Qp}) splits as $\C[\partial_{ij}]$-modules. Since the differentials (\ref{eq:diff-derham}) in the de Rham complex use only the $\C[\partial_{ij}]$-module structure of a $\D$-module, the following is an immediate consequence.

\begin{corollary}\label{cor:drQp}
We have a decomposition of complexes 
\[DR(Q_p) = \bigoplus_{s=0}^p DR(D_s).\]
In particular, 
 \[H^i_{dR}(Q_p) = \bigoplus_{s=0}^p H^i_{dR}(D_s).\]
\end{corollary}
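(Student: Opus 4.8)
The plan is to exploit the remark made just before the statement: the de Rham complex of a $\D_X$-module records only its structure as a module over the polynomial ring $\C[\partial_{ij}]$ of constant-coefficient differential operators. Concretely, trivializing $\Omega^\bullet_X$ by the basis of the $dx_{ij}$ identifies $DR(M)$, up to the standard reindexing $\Omega^i_X\cong\mc O_X^{\binom{d_X}{i}}$, with the Koszul cochain complex of the commuting family $\{\partial_{ij}\}_{i,j}$ acting on $M$ — this is exactly what the formula for $d^i$ in \eqref{eq:diff-derham} says. Consequently $M\mapsto DR(M)$ is a well-defined additive functor from $\C[\partial_{ij}]$-modules to complexes of $\C$-vector spaces, and in particular it carries a direct sum of $\C[\partial_{ij}]$-modules to the corresponding direct sum of complexes.

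First I would record that, by Lemma~\ref{lem:Qps}, each subspace $Q_p^s\subset Q_p$ is stable under every $\partial_{ij}$, so the $G$-representation decomposition $Q_p=\bigoplus_{s=0}^p Q_p^s$ is in fact a decomposition of $\C[\partial_{ij}]$-modules; equivalently, the short exact sequence \eqref{eq:ses-Qp} splits in the category of $\C[\partial_{ij}]$-modules. Indeed the submodule $D_p=Q_p^p\subset Q_p$ is $\C[\partial_{ij}]$-stable and so is its complement $\bigoplus_{s<p}Q_p^s$, and the latter maps isomorphically onto $Q_{p-1}$ via the $\D_X$-linear — hence $\C[\partial_{ij}]$-linear — surjection in \eqref{eq:ses-Qp} (whose kernel is precisely $Q_p^p$), which provides the desired $\C[\partial_{ij}]$-linear section.

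Combining the two steps, additivity of $DR$ gives a decomposition of complexes $DR(Q_p)=DR(D_p)\oplus DR(Q_{p-1})$; since $Q_0=D_0$, an immediate induction on $p$ yields $DR(Q_p)=\bigoplus_{s=0}^p DR(D_s)$. Taking cohomology in each degree then produces $H^i_{dR}(Q_p)=\bigoplus_{s=0}^p H^i_{dR}(D_s)$, which is the assertion.

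I do not expect a real obstacle here — the corollary is genuinely an ``immediate consequence'' of Lemma~\ref{lem:Qps} — and the only points deserving care are the two bookkeeping items isolated above: making precise that $DR$ is an additive functor on $\C[\partial_{ij}]$-modules (transparent from \eqref{eq:diff-derham}), and checking that the splitting supplied by Lemma~\ref{lem:Qps} is a splitting of $\C[\partial_{ij}]$-modules rather than merely of $G$-representations, so that the identification $Q_p^s\cong D_s$ used in the induction holds already at the level of $\C[\partial_{ij}]$-modules.
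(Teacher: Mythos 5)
Your argument is correct and is essentially the paper's own: Lemma~\ref{lem:Qps} shows the sequence \eqref{eq:ses-Qp} splits over $\C[\partial_{ij}]$, and since the differentials \eqref{eq:diff-derham} use only the $\C[\partial_{ij}]$-structure, the decomposition of complexes follows. Your extra bookkeeping (the induction on $p$ identifying $Q_p^s\cong D_s$ as $\C[\partial_{ij}]$-modules, and the functoriality/additivity of $DR$ on $\C[\partial_{ij}]$-modules) simply makes explicit what the paper treats as an ``immediate consequence,'' so there is nothing to correct.
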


\begin{remark}\label{rem:topo}
In the case $p=n$ the de Rham cohomology of $Q_n=S_{\det}$ coincides with the singular cohomology of the 
complement $O_n$ of the hypersurface $\det=0$, and that of $D_s$ yields intersection cohomology. Reinterpreting the result above in topological terms yields the following formula (here $c_s=\codim_{X} O_s$):
\[H^i(O_n,\C) = \sum_{s=0}^n IH^{i-c_s}(\ol{O}_s), \quad \mbox{ for all } i\geq 0.\]
\end{remark}

\begin{remark}\label{rem:dual} 
We end this subsection by concluding that the \v Cech--de Rham numbers only depend on the class of the local cohomology modules in the Grothendieck group of $\op{mod}_G(\D_X)$, whose description is uniform for the square and non-square cases. This is in contrast to the case of Lyubeznik numbers, where the formulas in the square case are different from the ones in the non-square case (see \cite[Theorems~1.3 and~1.5]{lor-rai}). The explanation in the case of Lyubeznik numbers comes from the fact that the sequence (\ref{eq:ses-Qp}) is not split in the category of $S$-modules. However, the sequence is split in the category of $\C[\pd_{ij}]$-modules, which is why the results on de Rham cohomology are uniform.
\end{remark}

\subsection{Mixed Hodge structure on cohomology and Borel--Moore homology}\label{sec:mhs-det}

In this section we compute the Hodge numbers of $H^i(O_p)$ and $H_i^{BM}(\ol{O}_p)$. This is based on the knowledge of the weight filtration on $H^i_{\ol{O}_p}(\mc{O}^H_X)$ by \cite[Theorem 1.1]{perlman2}, and the degeneration of the spectral sequence in Proposition \ref{prop:mhs} by Theorem \ref{thm:main-generic} (c). We first record the following result on intersection cohomology.

\begin{lemma}\label{lem:interhom}
For all $i,p \geq 0$, we have an isomorphism as mixed Hodge structures
\[IH^i(\ol{O}_p) \cong H^i(\op{Grass}(p,n)).\]
In particular, $IH^i(\ol{O}_p)$ has a pure Hodge structure of weight $i$, and its Hodge numbers are concentrated on the diagonal.
\end{lemma}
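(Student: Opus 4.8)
The plan is to identify the intersection cohomology $\D_X$-module $D_p = \mc{L}(\ol{O}_p, X)$ with a mixed Hodge module and then transport the classical computation of $IH^\bullet(\ol{O}_p)$ to a computation of the underlying Hodge structures. First I would recall that, as a mixed Hodge module, $D_p$ is the pure Hodge module $IC^H_{\ol{O}_p}$ extending the constant (trivial) variation of Hodge structure on the smooth locus $O_p$, with a suitable shift so that its underlying perverse sheaf is the intersection complex; its de Rham cohomology $H^i_{dR}(D_p)$ is then, by the compatibility of de Rham cohomology with pushforward to a point (the discussion after \eqref{eq:drcomp} and Section~\ref{sec:mhs}), canonically a mixed Hodge structure computing $IH^i(\ol{O}_p)$. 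So the content of the lemma is really the assertion that this Hodge structure is pure of weight $i$ and Tate.

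Next I would prove purity and the Tate property by reduction to a known smooth model. The standard device is a resolution of singularities of $\ol{O}_p$ by a Grassmannian bundle: set $\widetilde{O}_p = \{(L, A) : L \in \op{Grass}(p, \C^n),\ \op{im}(A^{T}) \subseteq L\} \subset \op{Grass}(p,\C^n) \times X$, which is a vector bundle over $\op{Grass}(p,\C^n)$ (hence smooth) mapping properly and birationally onto $\ol{O}_p$, and in fact semismall (indeed small) for this action. By the decomposition theorem for mixed Hodge modules (Saito), the pushforward of the constant Hodge module on $\widetilde{O}_p$ decomposes, and because the map is small the intersection cohomology Hodge module $IC^H_{\ol{O}_p}$ is a direct summand of $\pi_* \C^H_{\widetilde{O}_p}$. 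Taking de Rham cohomology (pushforward to a point), $IH^i(\ol{O}_p)$ becomes a sub-mixed-Hodge-structure of $H^i(\widetilde{O}_p)$. Since $\widetilde{O}_p$ is a vector bundle over $\op{Grass}(p,\C^n)$, homotopy invariance gives $H^i(\widetilde{O}_p) \cong H^i(\op{Grass}(p,\C^n))$ as mixed Hodge structures, and the latter is pure of weight $i$ with Hodge numbers on the diagonal (its cohomology is spanned by algebraic Schubert cycles, so it is of Tate type $(i/2,i/2)$). A subquotient of a pure Tate Hodge structure is again pure and Tate, giving the claim up to this inclusion.

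To upgrade the inclusion to an isomorphism, I would invoke the dimension count already available in the paper: the ranks $h^i_{dR}(D_s)$ are given by \eqref{eq:deRham-Dp}, and for $s = p$ this reads $\sum_i h^i_{dR}(D_p)\, q^i = {n\choose p}_{q^2}$ (the $p = s$ case makes the factor $q^{(m-p)(n-p)} = 1$), which is exactly the Poincaré polynomial of $\op{Grass}(p,n)$ recorded in Section~\ref{sec:repgl}. Hence the graded dimensions of $IH^\bullet(\ol{O}_p)$ and $H^\bullet(\op{Grass}(p,n))$ agree degree by degree, so the injection of mixed Hodge structures constructed above is an isomorphism. (Alternatively, since both sides are pure Tate, one only needs the numerical coincidence to conclude, which is cleaner.) The ``in particular'' clauses are then immediate: purity of weight $i$ and diagonal concentration are inherited from $H^i(\op{Grass}(p,n))$.

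The main obstacle I anticipate is purely expository rather than mathematical: making precise the identification of $D_p$ with the pure Hodge module $IC^H_{\ol{O}_p}$ and the bookkeeping of weight and Tate twist shifts through the pushforward $\pi_+$ to a point, so that $H^i_{dR}(D_p)$ is unambiguously $IH^i(\ol{O}_p)$ with the correct (weight $i$) normalization. Once those conventions are pinned down, the proof is short. A mild technical point to verify carefully is the smallness (or at least semismallness) of the resolution $\widetilde{O}_p \to \ol{O}_p$; this is standard for determinantal varieties but should be stated, perhaps with a reference, rather than rederived. If one prefers to avoid the decomposition theorem entirely, an equally valid route is to combine the known purity of $IH^\bullet$ of (the cone over) a smooth projective variety — here $\ol{O}_p$ is the total space of a cone-like construction over $\op{Grass}(p,n)$ via the Segre-type embedding — with Deligne's splitting and weight arguments; but the resolution argument above is the most self-contained.
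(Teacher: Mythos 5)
Your proposal is correct and follows essentially the same route as the paper: resolve $\ol{O}_p$ by the total space of a vector bundle over $\op{Grass}(p,n)$, use smallness of this resolution to identify $IH^\bullet(\ol{O}_p)$ with the cohomology of the resolution as mixed Hodge structures, and deduce purity and diagonal Hodge numbers from the Grassmannian. Two small remarks: the dimension-count step is superfluous, since smallness already forces the pushforward of the constant Hodge module to \emph{be} $IC^H_{\ol{O}_p}$ (not merely contain it as a summand), so you get the isomorphism directly; and your parenthetical claim that the factor $q^{(m-p)(n-p)}$ in \eqref{eq:deRham-Dp} equals $1$ when $s=p$ is not right --- that factor records the codimension shift $IH^i(\ol{O}_p)\cong H^{i+c_p}_{dR}(D_p)$, and the Poincar\'e polynomial of $IH^\bullet(\ol{O}_p)$ equals ${n\choose p}_{q^2}$ only after accounting for this shift, which is in any case the statement you need.
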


\begin{proof}
By \cite[Section 3.3]{zelevinsky}, there is a small resolution of singularities $Z \to \ol{O}_p$, such that $Z$ is the total space of a vector bundle over $\op{Grass}(p,n)$. This implies that we have an isomorphism of mixed Hodge structures $ IH^i(\ol{O}_p) \cong H^i(Z)$, for every $i\geq 0$ (see \cite[Proposition 8.2.30]{htt}). Since the Serre spectral sequence corresponding to the fibration $\pi: Z \to \op{Grass}(p,n)$ degenerates, the pullback via $\pi$ induces isomorphisms $H^i(Z) \cong H^i(\op{Grass}(r,n))$ of mixed Hodge structures, for all $i$, thus proving the first claim. As $\op{Grass}(p,n)$ is a smooth projective variety, this shows that $IH^i(\ol{O}_p)$ has a pure Hodge structure of weight $i$. The claim regarding the Hodge numbers follows from \cite[Example 19.1.11]{fulton}.
\end{proof}

Next, we record the Hodge numbers on de Rham cohomology of local cohomology.

\begin{theorem}\label{thm:derham-local-mhs}
The following trivariate generating function records the weight filtration on the mixed Hodge structure of $H^i_{dR} (H^j_{\ol{O}_p}(\mc{O}_X^H))$:
\[ \sum_{i,j,t\geq 0} \!\! \dim_{\C} \op{Gr}_k^{W} \! H^i_{dR}(H^j_{\ol{O}_p}(\mc{O}_X^H))\cdot q^i w^j t^k = \sum_{s=0}^p t^{p-s} \cdot (qt)^{(m-s)\cdot(n-s)} \cdot {n \choose s}_{\!(qt)^2} \!\!\!\!\!\! \cdot (wt)^{(n-p)^2+(n-s)\cdot(m-n)} \cdot {n-1-s \choose p-s}_{\!(wt)^2}\!\!\!\!.\]
Moreover, the Hodge numbers of $H^i_{dR} (H^j_{\ol{O}_p}(\mc{O}_X^H))$ are concentrated on the diagonal for all $i,j\geq 0$.
\end{theorem}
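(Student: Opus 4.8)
The plan is to upgrade the dimension count of Theorem~\ref{thm:main-generic}(a) to a statement about weights, using Perlman's determination of the weight filtration on local cohomology together with the fact that the de Rham cohomology of each simple factor $D_s$ is the pure cohomology of a Grassmannian. By \cite[Theorem~1.1]{perlman2}, the weight filtration $W_\bullet$ on the mixed Hodge module $M:=H^j_{\ol{O}_p}(\mc{O}_X^H)$ has semisimple graded pieces, each $\op{Gr}^W_k M$ being a direct sum of Tate twists $D_s^H(c)$ of the intersection cohomology Hodge modules $D_s^H$, with multiplicities and twists (read off from $s$, $j$, $k$) refining the $\D$-module identity \eqref{eq:compos-loccoh-generic}. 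On the other hand, since $D_s=\mc{L}(\ol{O}_s,X)$, its de Rham cohomology $H^\bullet_{dR}(D_s^H)$ agrees, up to a degree shift by $c_s=\codim_X O_s$ and a normalization Tate twist, with the intersection cohomology of $\ol{O}_s$; by Lemma~\ref{lem:interhom} (using Zelevinsky's small resolution $Z\to\ol{O}_s$ from \cite[Section~3.3]{zelevinsky}, with $Z\to\op{Grass}(s,n)$ a vector bundle) this is $H^\bullet(\op{Grass}(s,n))$, a \emph{pure} Hodge structure with Hodge numbers concentrated on the diagonal, whose Poincar\'e polynomial is \eqref{eq:deRham-Dp}. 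Tate twists preserve purity and diagonality, so each summand $H^\bullet_{dR}(D_s^H(c))$ of $H^\bullet_{dR}(\op{Gr}^W_k M)$ has these properties.

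The key step is to show that the weight filtration on $H^i_{dR}(M)$ is the one induced by $W_\bullet M$. By Theorem~\ref{thm:main-generic}(a), the de Rham cohomology of $M$ has the same total dimension as that of the direct sum of its composition factors, i.e. $\sum_i h^i_{dR}(M)=\sum_i h^i_{dR}(\op{Gr}^W M)$ for every $j$. Applying $H^\bullet_{dR}$ to the short exact sequences $0\to W_{k-1}M\to W_kM\to\op{Gr}^W_kM\to 0$, the long exact sequences give $\sum_i h^i_{dR}(W_kM)\le\sum_i h^i_{dR}(W_{k-1}M)+\sum_i h^i_{dR}(\op{Gr}^W_kM)$, with equality if and only if every connecting map $H^i_{dR}(\op{Gr}^W_kM)\to H^{i+1}_{dR}(W_{k-1}M)$ vanishes; telescoping these inequalities and invoking the equality above forces all connecting maps to vanish, so we obtain short exact sequences of mixed Hodge structures $0\to H^i_{dR}(W_{k-1}M)\to H^i_{dR}(W_kM)\to H^i_{dR}(\op{Gr}^W_kM)\to 0$ for all $i,k$. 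Since $\op{Gr}^W_\bullet$ is exact on mixed Hodge structures and each $H^i_{dR}(\op{Gr}^W_kM)$ is a direct sum of pure Hodge structures, we conclude
\[\op{Gr}^W_a H^i_{dR}(M)\;=\;\bigoplus_{k}\,\op{Gr}^W_a\,H^i_{dR}(\op{Gr}^W_kM),\]
a direct sum of Tate twists of cohomology groups of Grassmannians; in particular the Hodge numbers of $H^i_{dR}(M)$ are concentrated on the diagonal.

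It then remains to assemble the trivariate generating function by substituting \eqref{eq:deRham-Dp} into Perlman's weighted refinement of \eqref{eq:compos-loccoh-generic}: the variable $t$ records the weight — a de Rham class sits in weight equal to its cohomological degree plus twice its Tate twist, which is why $q$ and $w$ enter bundled as $qt$ and $wt$, with the residual shift $t^{p-s}$ coming from \cite{perlman2} — and the specialization $t\mapsto 1$ must recover Theorem~\ref{thm:main-generic}(a), which serves as a consistency check. I expect the main obstacle to be precisely this weight and Tate-twist bookkeeping: reconciling Saito's normalization with the conventions of \cite{perlman2} so that the weights assigned there, combined with the purity statement for $H^\bullet_{dR}(D_s^H)$, produce exactly the stated formula. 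A secondary point needing care is the identification $H^\bullet_{dR}(D_s^H)\cong H^\bullet(\op{Grass}(s,n))$ \emph{as mixed Hodge structures} (not merely as graded vector spaces), for which one uses that $D_s^H$ is the direct image of the constant Hodge module under a proper, indeed small, resolution and that the projection $Z\to\op{Grass}(s,n)$, being a vector bundle, induces an isomorphism on cohomology.
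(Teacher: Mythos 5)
Your proposal is correct and follows essentially the same route as the paper: decompose $H^j_{\ol{O}_p}(\mc{O}_X^H)$ into its composition factors $D_s^H$ with the Tate twists from \cite{perlman2}, identify $H^\bullet_{dR}(D_s^H)$ with the pure, diagonally-concentrated cohomology of a Grassmannian via Lemma~\ref{lem:interhom}, and use the dimension equality of Theorem~\ref{thm:main-generic}(a) to force additivity of Hodge numbers before assembling the generating function. The only difference is one of explicitness: you spell out the vanishing of the connecting maps in the long exact sequences, while the paper invokes this implicitly and instead carries out the Tate-twist bookkeeping (each $D_s$ contributes in weight $p-s+i+j$) that you leave as the remaining check.
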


\begin{proof}
We write $D_p^H$ for the pure Hodge module of weight $d_{O_p}$ corresponding to the intersection cohomology sheaf of $\ol{O}_p$, isomorphic to $\mc{L}(\ol{O}_p, X) = D_p$ as $\D$-modules. By the discussion in Sections \ref{subsec:derham} and \ref{sec:mhs}, we have isomorphisms of mixed Hodge structures 
\begin{equation}\label{eq:drint}
H^i_{dR}(D_p^H) \cong IH^{i-c_p}(\ol{O}_p).    
\end{equation}

As seen in Section \ref{subsec:proof-generic}, the de Rham cohomology of $H^j_{\ol{O}_p}(S)$ is equal to the direct sum of the de Rham cohomology of its $\D$-module composition factors. By \cite[Theorem 3.1]{perlman2}, each of these factors $D_s$ carries the mixed Hodge module structure $D_s^H(d_{O_s}+s-mn-p-j)$, as a factor of $H^j_{\ol{O}_p}(\mc{O}_X^H)$. Due to the additivity of Hodge numbers, by (\ref{eq:drint}) each factor $D_s$ thus contributes with the Hodge numbers of $IH^{i-c_s}(\ol{O}_p)((d_{O_S}+s-mn-p-j)/2)$. In particular, by Lemma \ref{lem:interhom} all of these are concentrated on the diagonal, and the contribution of the de Rham cohomology of a factor $D_s$ to $H^i_{dR}(H^j_{\ol{O}_p}(\mc{O}_X^H))$ can occur only in weight $p-s+i+j$. Taking these into account, the combination of (\ref{eq:deRham-Dp}) and (\ref{eq:compos-loccoh-generic}) readily gives the desired formula.
\end{proof}

\noindent\textit{Proof of Theorem \ref{thm:main-weight}}.
By Theorem \ref{thm:main-generic} (c), we know that the spectral sequence of mixed Hodge modules in Proposition \ref{prop:mhs} degenerates at the $E_2$ page. As we did at the end of Section \ref{subsec:proof-generic}, we readily recover the first formula in Theorem \ref{thm:main-weight} from Theorem \ref{thm:derham-local-mhs} by: specializing $w 
\mapsto q$, making a change of variable $q\mapsto q^{-1}$, multiplying by $q^{2mn}$, putting $t\mapsto w$, using (\ref{eq:qbin-pos}), and taking into account the Tate twist. The claim on the Hodge numbers also follows readily.

Now we show the second formula in Theorem \ref{thm:main-weight}. By Theorem \ref{thm:main-generic} (c) and \cite[Corollary 2.26]{mhs}, for all $i$ we have an exact sequence of mixed Hodge structures
\[0 \to H_{i}^{BM}(\ol{O}_p) \to H^{2d_{O_p}-i}(O_p)(d_{O_p}) \to H_{i-1}^{BM}(\ol{O}_{p-1}) \to 0.\]
From the first part, it follows readily that the Hodge numbers of $H^i(O_p)$ are also concentrated on the diagonal, for all $i$. Using the first formula in Theorem \ref{thm:main-weight}, we have (putting $t=qw^{-1}$)
\[\dim_{\C} \op{Gr}^W_j H^{2d_{O_p}-i}(O_p)(d_{O_p}) \cdot q^i w^j = \sum_{s=0}^p w^{p-s} t^{2sm+(p-s)(p-s+2)} {n \choose s}_{t^{2}} {n-1-s \choose p-s}_{t^2} + \]
\[+\sum_{s=0}^{p-1} q \cdot w^{p-1-s} t^{2sm+(p-s-1)(p-s+1)} {n \choose s}_{t^{2}} {n-1-s \choose p-1-s}_{t^2} =\]
\[= \sum_{s=0}^p w^{p-s} \cdot t^{2sm+(p-s)^2} {n \choose s}_{t^{2}} \cdot \left[t^{2(p-s)}{n-1-s \choose p-s}_{t^2} +  {n-1-s \choose p-1-s}_{t^2}  \right].\]
Using the following identities 
\[{a \choose b}_{q}=q^{b}\cdot {a-1 \choose b}_{q}+{a-1 \choose b-1}_{q}, \mbox{ and } \quad {a \choose b}_{q} {a-b \choose c-b}_{q}= {a \choose c}_{q} {c \choose b}_{q},\]
we obtain (after putting $s\mapsto p-s$)
\[\dim_{\C} \op{Gr}^W_j H^{2d_{O_p}-i}(O_p)(d_{O_p}) \cdot q^i w^j = {n \choose p}_{t^{2}}\cdot \sum_{s=0}^p w^{s} \cdot t^{s^2+2(p-s)m} \cdot {p \choose s}_{t^{2}}.\]
Using the Gaussian binomial theorem
\[\prod _{k=0}^{n-1}(1+a^{k}b)=\sum _{k=0}^{n}a^{k(k-1)/2}{n \choose k}_{a}\cdot b^{k},\]
we obtain by putting $a=t^2$ and $b=w \cdot t^{1-2m}$
\[\dim_{\C} \op{Gr}^W_j H^{2d_{O_p}-i}(O_p)(d_{O_p}) \cdot q^i w^j = t^{2pm} \cdot {n \choose p}_{t^{2}}\cdot \prod_{s=0}^{p-1}(1+t^{2s+1-2m}w).\]
We replace $q\mapsto q^{-1}$, let $u=qw$, and multiply both sides with $u^{2d_{O_p}}$ (recall $d_{O_p}=p(m+n-p)$), to get
\[\dim_{\C} \op{Gr}^W_j H^{i}(O_p) \cdot q^i w^j = u^{2p(n-p)} \cdot {n \choose p}_{u^{-2}} \cdot \prod_{s=0}^{p-1}(1+u^{2m-2s-1}w),\]
which, after using (\ref{eq:qbin-pos}), yields the result.
\qed

\begin{remark}\label{rem:totaro}
Let $CH_i(\ol{O}_p)$ denote the Chow groups of $\ol{O}_p$. The determinantal varieties $\ol{O}_p$ are known to be spherical, hence, by a result of Totaro \cite[Theorem 3]{totaro}, the natural cycle map
\[CH_i(\ol{O}_p) \oo \C \longrightarrow W_{-2i}H_{2i}^{BM}(\ol{O}_p)\]
is an isomorphism. Therefore, we recover the (rational) Chow groups computed in \cite{pragacz}*{Section~4} from the first formula in Theorem \ref{thm:main-weight}. More precisely, the summand with $s=p$ in the latter yields exactly the lowest piece of the filtration $W_{-2i}H_{2i}^{BM}(\ol{O}_p)$. Based on these circle of ideas, a conceptual reason as to why the dimension of this agrees with that of the intersection cohomology of $\ol{O}_p$ (computed in \cite{zelevinsky}*{Section~3.3}) would go as follows: in the spectral sequence from Proposition \ref{prop:mhs}, the only $\D$-module composition factor contributing to the lowest pieces $W_{-2i}H_{2i}^{BM}(\ol{O}_p)$ is $D_p$ (appearing in $H_{\ol{O}_p}^{j}(\mc{O}_X)$ only when $j=c_p$, in which case it does so once), whose de Rham cohomology in turn yields the intersection cohomology groups of $\ol{O}_p$.
\end{remark}

As mentioned in the Introduction, the degeneration of the \v{C}ech--de Rham spectral sequence is an open problem in general. We end this section by illustrating that even with the prior knowledge of all the terms on its second page, one can not conclude that the spectral sequence degenerates for weight reasons alone.

\begin{example}\label{ex:theweightisnotenough}
Take $m=n=4$ and $p=2$ in Theorem \ref{thm:derham-local-mhs}, and consider for this case the third page of the \v{C}ech--de Rham spectral sequence from Proposition \ref{prop:mhs}. Then we obtain a differential 
\[\C = \op{Gr}_{16}^W H^{12}_{dR}H^4_{\ol{O}_2}(\mc{O}_X^H) \, \longrightarrow \, \op{Gr}_{16}^W H^{9}_{dR}H^6_{\ol{O}_2}(\mc{O}_X^H) = \C.\]
Hence, this map is between non-trivial spaces of the same weight. We know, a posteriori, that this is zero due to Theorem \ref{thm:main-generic} (c).
\end{example}

\section{The case of skew-symmetric matrices}\label{sec:skew}

In this section we let $X=\bw^2\bb{C}^n$ denote the space of $n\times n$ skew-symmetric matrices, endowed with the natural action of $G=\GL_n$. We let $m=\lfloor{n/2}\rfloor$ and denote the $G$-orbits by $O_p$ as before, where now $O_p$ consists of skew-symmetric matrices of rank $2p$, $0\leq p\leq m$. The goal of this section is to prove the following result, which combined with \eqref{eq:qbin-pos} implies part b) of Theorems~\ref{thm:main2},~\ref{thm:di=0}, and~\ref{thm:main} (as before, we disregard the case $p=m$ when $\ol{O}_p=X$).

\begin{theorem}\label{thm:main-skew}
Suppose that $0\leq p<m=\lfloor n/2\rfloor$, and let $\epsilon=n-2m$.
\begin{itemize} 
\item[(a)] The generating function for de Rham cohomology of local cohomology modules is
\[ \sum_{i,j\geq 0} h^i_{dR}(H^j_{\ol{O}_p}(S))\cdot q^i \cdot w^j  =\sum_{s=0}^p q^{{n\choose 2}-s(2n-2s-1)} \cdot {m \choose s}_{q^4} \cdot w^{2(m-p)^2+p-m+2\epsilon(m-s)} \cdot {m-1-s \choose p-s}_{w^4}.\]
\item[(b)] The Hilbert--Poincar\'e polynomial for the Borel--Moore homology of the orbit closures is given by
\[\sum_{i\geq 0} h_i^{BM}(\ol{O}_p) \cdot q^i \, = \, \sum_{s=0}^p q^{2s(n+\epsilon-1)+(p-s)(2p-2s+3)} \cdot {m \choose s}_{q^{4}} \cdot {m-1-s \choose p-s}_{q^{4}}.\]
\item[(c)] The \v Cech--de Rham spectral sequence \eqref{eq:CdR-sp-seq} degenerates at the $E_2$ page, and the maps $d_i$ in \eqref{eq:les-BM-hom} vanish.
\end{itemize}
\end{theorem}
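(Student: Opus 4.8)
## Proof plan for Theorem~\ref{thm:main-skew}

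The plan is to follow the five-step strategy from the Introduction, which for the skew-symmetric case proceeds almost exactly as in Section~\ref{subsec:proof-generic}, so I will concentrate on the inputs that need to be replaced. First I would recall from \cite{raicu-weyman} (or the survey \cite{raicu-survey}) the classification of the simple equivariant $\D_X$-modules: these are $D_0,\dots,D_m$, with $D_p=\mc{L}(\ol{O}_p,X)$ the intersection homology $\D_X$-module of $\ol{O}_p$. The two ingredients I need about them are (i) the generating function for their de Rham cohomology, which should take the shape
\[\sum_{i\geq 0} h^i_{dR}(D_s)\cdot q^i = {m\choose s}_{q^4}\cdot q^{{n\choose 2}-s(2n-2s-1)},\]
as computed in \cite{euler} (this is the exponent matching $\codim_X O_s=\dim X-\dim O_s$ up to the Poincar\'e normalization, and the $q^4$ reflects that the skew Grassmannian strata have real codimension divisible by $4$); and (ii) the composition-series identity in the Grothendieck group of $\opmod_G(\D_X)$, from \cite{raicu-weyman-loccoh}, expressing $\sum_j [H^j_{\ol{O}_p}(S)]\cdot w^j$ as a $w$-weighted sum of the $[D_s]$ with coefficients ${m-1-s\choose p-s}_{w^4}$ and appropriate $w$-shifts. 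Combining (i) and (ii) and using subadditivity of de Rham cohomology on short exact sequences yields the inequality ``$\leq$'' in part~(a), with the right-hand side exactly the claimed generating function.

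Next I would set $N_p$ to be the specialization at $q=w=1$ of this right-hand side, so that $N_p=\sum_{s=0}^p {m\choose s}{m-1-s\choose p-s}$, and $\rho^{tot}(\ol{O}_p)\leq N_p$. The crucial combinatorial step is the analogue of Lemma~\ref{lem:btot-Op-Np-generic}: I would verify, using the Pascal-type recursion ${m-1-s\choose p-s}+{m-1-s\choose p-1-s}={m-s\choose p-s}$ and then the Vandermonde-style collapse $\sum_{s}{m\choose s}{m-s\choose p-s}=\sum_s{m\choose p}{p\choose s}={m\choose p}2^p$, that
\[N_p+N_{p-1}={m\choose p}\cdot 2^p.\]
By Theorem~\ref{thm:coho-orbit}(b) this equals $b^{tot}(O_p)$ exactly (for $p<m$). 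Hence the chain of inequalities
\[b^{tot}(O_p)\leq b_{tot}^{BM}(\ol{O}_p)+b_{tot}^{BM}(\ol{O}_{p-1})\leq \rho^{tot}(\ol{O}_p)+\rho^{tot}(\ol{O}_{p-1})\leq N_p+N_{p-1}\]
is forced to be a chain of equalities for all $p<m$. This simultaneously gives: the degeneration of \eqref{eq:CdR-sp-seq} at $E_2$ and the vanishing of the maps $d_i$ (part~(c)); equality in the part~(a) inequality (part~(a)); and, via the $w=q$ specialization of part~(a) followed by the substitution $q\mapsto q^{-1}$, multiplication by $q^{2d_X}=q^{n(n-1)}$ and an application of \eqref{eq:qbin-pos}, the Borel--Moore formula of part~(b) — one would check that the resulting exponent simplifies to $2s(n+\epsilon-1)+(p-s)(2p-2s+3)$.

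The main obstacle, as in the square-matrix case, is not the formal bookkeeping but pinning down that the two external inputs — the de Rham cohomology $h^i_{dR}(D_s)$ from \cite{euler} and the composition-factor identity from \cite{raicu-weyman-loccoh} — really carry the exponents I have written, including the $\epsilon=n-2m$ corrections that distinguish $n$ even from $n$ odd. Concretely, I expect the delicate point to be reconciling the cohomological-degree normalization used in \cite{euler} (de Rham cohomology of a $\D$-module on a $d_X$-dimensional space) with the topological normalization of the intersection cohomology of $\ol{O}_s$, and keeping track of the shifts $w^{2(m-p)^2+p-m+2\epsilon(m-s)}$ in the local-cohomology composition series; once those exponents are confirmed, parts (a)--(c) follow purely formally from the equality $b^{tot}(O_p)=N_p+N_{p-1}$ as above, with no further case analysis needed (the even/odd distinction having already been absorbed into $\epsilon$, and the case $p=m$ excluded by hypothesis since then $\ol{O}_m=X$).
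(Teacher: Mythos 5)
Your proposal matches the paper's own argument in Section~\ref{sec:skew} essentially step for step: the de Rham generating function for the simples $D_s$ from \cite{euler}, the composition-series identity \eqref{eq:compos-loccoh-skew} from \cite{raicu-weyman-loccoh}, subadditivity giving the inequality in (a), the specialization $N_p=\sum_s{m\choose s}{m-1-s\choose p-s}$ with $N_p+N_{p-1}={m\choose p}2^p$ matched against Theorem~\ref{thm:coho-orbit}(b) to force equality throughout, and finally the $w=q$, $q\mapsto q^{-1}$ specialization with \eqref{eq:qbin-pos} to obtain (b). This is correct and takes the same route as the paper.
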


\subsection{The proof of Theorem~\ref{thm:main-skew}}\label{subsec:proof-skew}

The simple objects in $\opmod_{G}(\D_X)$ are the intersection homology $\D_X$-modules $D_p=\mc{L}(\ol{O}_p, X)$. By \cite{euler}*{Theorem 6.1}, the generating function for the de Rham cohomology of the simples $D_s$ is given by

\begin{equation}\label{eq:deRham-Dp-skew}
\sum_{i\geq 0} h^i_{dR}(D_s) \cdot q^i =  {m \choose s}_{q^4} \cdot q^{{n\choose 2}-s(2n-2s-1)}.
\end{equation}
Moreover, by \cite{raicu-weyman-loccoh}*{(1.4)}, we have for $p<m$ the formal identity
\begin{equation}\label{eq:compos-loccoh-skew}
\sum_{j\geq 0} [H^j_{\ol{O}_p}(S)]\cdot w^j = \sum_{s=0}^p [D_s]\cdot w^{2(m-p)^2+p-m+2\epsilon(m-s)} \cdot {m-1-s \choose p-s}_{w^4}.
\end{equation}
describing the simple $\mc{D}_X$-composition factors of the local cohomology modules $H^j_{\ol{O}_p}(S)$. Combining \eqref{eq:deRham-Dp-skew} with \eqref{eq:compos-loccoh-skew} we obtain the inequality
\begin{equation}\label{eq:ineq-gen-fun-DR-loccoh-skew} 
\sum_{i,j\geq 0} h^i_{dR}(H^j_{\ol{O}_p}(S))\cdot q^i \cdot w^j  \leq\sum_{s=0}^p q^{{n\choose 2}-s(2n-2s-1)} \cdot {m \choose s}_{q^4} \cdot w^{2(m-p)^2+p-m+2\epsilon(m-s)} \cdot {m-1-s \choose p-s}_{w^4}.
\end{equation}
Specializing to $q=w=1$, we obtain
\[\rho^{tot}(\ol{O}_p) \leq N_p := \sum_{s=0}^p {m\choose s}\cdot {m-1-s\choose p-s}.
\]
Using the proof of Lemma~\ref{lem:btot-Op-Np-generic} (with $n$ replaced by $m$, and part (a) of Theorem~\ref{thm:coho-orbit} replaced by part (b)), we get \eqref{eq:btotOp=NpNpm1}, and conclude that \eqref{eq:CdR-sp-seq} degenerates and that the maps $d_i$ in \eqref{eq:les-BM-hom} vanish. Moreover, \eqref{eq:ineq-gen-fun-DR-loccoh-skew} is an equality, and by specializing it to $w=q$ and using the degeneration of \eqref{eq:CdR-sp-seq} and $\dim(X)={n\choose 2}$, we get
\[\sum_{k\geq 0} h_k^{BM}(\ol{O}_p) \cdot q^{n(n-1)-k} = \sum_{i,j\geq 0} h^i_{dR}(H^j_{\ol{O}_p}(S))\cdot q^{i+j}.\]
Making the change of variable $q\to q^{-1}$, multiplying by $q^{n(n-1)}$, and using (\ref{eq:qbin-pos}), we get Theorem~\ref{thm:main-skew}(b).

\subsection{Comparison with Lyubeznik numbers}\label{subsec:Lyub-comparison-skew}

The contrast between the \v Cech-de Rham and Lyubeznik numbers is completely analogous to the discussion in Section \ref{subsec:Lyub-comparison}, and we explain this here briefly. When $n$ is odd, the category $\op{mod}_G(\D_X)$ is semisimple \cite[Theorem 5.7]{lor-wal}, which gives a more direct argument for the inequality in (\ref{eq:ineq-gen-fun-DR-loccoh-skew}) being an equality. Since $\mc{F}(D_p)\cong D_{m-p}$ (e.g. see \cite[Remark 1.5]{raicu-dmods}), the \v Cech--de Rham and Lyubeznik numbers completely determine each other using \eqref{eq:pushpull}. 

We will therefore assume from now on that $n=2m$ is even, when $\op{mod}_G(\D_X)$ is no longer semisimple \cite[Theorem 5.7]{lor-wal}. When $p<m$ the $\D$-module $H^j_{\ol{O}_p}(S)$ can be written as a direct sum of copies of the indecomposable $\D$-modules $Q_0, Q_1, \dots, Q_p$ by \cite{perlman}*{Theorem~1.1}, which are given by
\[Q_m= S_{\Pf}, \qquad Q_p = \dfrac{S_{\Pf}}{\langle \Pf^{2(p-m+1)} \rangle} \quad (0\leq p \leq m-1),\]
where $S_{\Pf}$ denotes the localization of $S$ at the Pfaffian. Note that $Q_0=D_0$ and for $1 \leq p \leq m$, we have the the non-split short exact sequences of $\D$-modules
\begin{equation}\label{eq:ses-Qp-skew}
 0 \lra D_p \lra Q_p \lra Q_{p-1} \lra 0.
\end{equation}
We have a decomposition of $Q_p$ as a $G$-representation
\[ Q_p = \bigoplus_{s=0}^p Q_p^s,\]
where $Q_p^s \simeq D_s$. As a rational $G$-module, the decomposition of $D_p$ is given in \cite[Section 6]{raicu-dmods}. Our conventions are as follows: let $S = \Sym(\bigwedge^2 V)$ with $\dim V=n$, and identify $X=\Spec(S) = \bigwedge^2 V^\vee$ endowed with the action of $G=\GL(V)$. Then
\begin{equation}\label{eq:decomp-Dp-skew}
 D_p = \bigoplus_{\ll \in B(p)}\bb{S}_{\ll} V,
\end{equation}
where 
\begin{equation}\label{eq:def-Bs}
 B(p) \, = \, \{\ll\in\bb{Z}^{n}_{\dom} : \ll_{2p}\geq 2p-n, \, \ll_{2p+1}\leq 2p-n+1, \mbox{ and }\ll_{2i-1}=\ll_{2i} \mbox{ for all } i\}.
\end{equation}

The next two results follow analogously to Lemma \ref{lem:Qps} and Corollary \ref{cor:drQp}.

\begin{lemma}\label{lem:Qps-skew}
 If $\pd\in \bw^2 V^\vee$ is a derivation and $z\in Q_p^s$ then $\pd(z)\in Q_p^s$. 
\end{lemma}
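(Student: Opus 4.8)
The plan is to mimic the proof of Lemma~\ref{lem:Qps} almost verbatim, replacing the indexing set $W(s)$ for the isotypic components of $D_s$ by the set $B(s)$ from \eqref{eq:def-Bs}, and Pieri's rule for $V_1^\vee\oo V_2^\vee$ by Pieri's rule for the single factor $\bw^2 V^\vee$. Concretely: since $Q_p^s\simeq D_s$ as $G$-modules, by \eqref{eq:decomp-Dp-skew} we may assume without loss of generality that $z$ lies in a single isotypic component $\bb{S}_\ll V$ with $\ll\in B(s)$. A derivation $\pd\in\bw^2 V^\vee$ lies in the $\GL(V)$-representation $\bb{S}_{(1,1,0,\dots)}V^\vee$, so by Pieri's rule (see \cite[Corollary 2.3.5]{weyman}) the decomposition of $\pd(z)$ into $G$-isotypic components involves only partitions $\nu$ obtained from $\ll$ by removing a vertical strip of size $2$, i.e. by removing one box from each of two distinct rows $r_1<r_2$, with $\nu_i=\ll_i$ for $i\notin\{r_1,r_2\}$ and $\nu_{r_j}=\ll_{r_j}-1$. (Here I would note that the constraint $\ll_{2i-1}=\ll_{2i}$ defining $B(s)$, together with $Q_p$ being $\D$-stable, forces the relevant $\nu$ to still have equal odd/even parts, so the two removed boxes must come from a pair of rows $\{2i-1,2i\}$ — this is the one place where the skew case genuinely differs from Lemma~\ref{lem:Qps} and must be handled with care.)

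Next, exactly as before, since $z\in\ker(Q_p\onto Q_{s-1})$ the appearing $\nu$ must lie in $B(t)$ for some $s\le t\le p$; the goal is to show $t=s$, equivalently that $z_\nu\in Q_p^s$, for every $\nu$ arising in $\pd(z)$. I would check the conditions $\nu_{2s}\ge 2s-n$ and $\nu_{2s+1}\le 2s-n+1$ from \eqref{eq:def-Bs} according to where the removed pair of rows sits relative to row $2s$: if neither removed row is $2s$ or $2s-1$ then both inequalities are preserved trivially; if the removed pair is exactly $\{2s-1,2s\}$ one uses $\ll_{2s}\ge 2s-n$ together with the structural constraint to see $\nu_{2s}=\ll_{2s}-1\ge 2s-n$ unless equality $\ll_{2s}=2s-n$ held, in which case $\nu_{2s}=2s-n-1$ and one runs the same contradiction argument as in Lemma~\ref{lem:Qps}: then $\nu\in B(t)$ with $t>s$ would give $2t-n\le\nu_{2t}\le\nu_{2s}=2s-n-1$, forcing $t<s$, a contradiction. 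The remaining placements of the removed pair (straddling row $2s$, or both rows $\ge 2s+1$, or both $\le 2s-1$) are dispatched by the same kind of elementary inequality bookkeeping.

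The main obstacle I anticipate is precisely the bookkeeping forced by the rigidity $\ll_{2i-1}=\ll_{2i}$: a naive single-box removal would break this parity constraint, so one must be careful to argue that only those $\nu$ preserving it actually occur (using that $Q_p$ is closed under $\pd$ and lies in the even-odd-equal locus), and then the case analysis on the pair of removed rows has a few more branches than the single-box analysis of Lemma~\ref{lem:Qps}. Once that is set up, everything else is a routine translation. Finally, as with Corollary~\ref{cor:drQp}, this lemma shows that the non-split sequence \eqref{eq:ses-Qp-skew} of $\D$-modules splits as $\C[\pd_{ij}]$-modules, and since the de Rham differentials \eqref{eq:diff-derham} only use the $\C[\pd_{ij}]$-structure, one immediately deduces $DR(Q_p)=\bigoplus_{s=0}^p DR(D_s)$ and hence $H^i_{dR}(Q_p)=\bigoplus_{s=0}^p H^i_{dR}(D_s)$, which is what feeds into the computation of $\rho^{tot}(\ol{O}_p)$.
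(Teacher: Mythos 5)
Your proposal is correct and is essentially the paper's own argument: reduce to an isotypic component $\bb{S}_{\ll}V$ with $\ll\in B(s)$, use Pieri's rule for $\bw^2 V^\vee$ together with the $\D$-stability of $Q_p$ (whose isotypic support satisfies $\nu_{2i-1}=\nu_{2i}$) to see that only removals of two boxes from a paired set of rows $\{2r-1,2r\}$ occur, and then run the same two-case analysis ($r\neq s$ versus $r=s$, with the contradiction $2t-n\leq\nu_{2t}\leq\nu_{2s}=2s-n-1$ in the boundary case). The only cosmetic difference is that your ``straddling'' branch is vacuous once the pairing constraint is in place, since the removed pair $\{2r-1,2r\}$ is either exactly $\{2s-1,2s\}$ or lies entirely on one side of row $2s$.
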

\begin{proof}
We may assume that $z$ belongs to an isotypic component $\bb{S}_{\ll}V$, where $\ll\in B(s)$. Since $\pd\in \bw^2 V^\vee$, it follows from Pieri's rule \cite[Corollary 2.3.5]{weyman} and the fact that $Q_p$ is closed under the action of $\pd$, that
 \[ \pd(z) = \sum_{\nu} z_{\nu},\]
 where for each $\nu$ we have that $z_{\nu}\in \bb{S}_{\nu}V$ for $\nu\in B(t)$ with $t\leq p$, and $\nu$ is obtained from $\ll$ by removing two boxes from the same column, i.e. there exists $r$ with $1\leq r\leq m$ such that
 \[ \nu_{2i-1}=\nu_{2i} = \ll_{2i-1}=\ll_{2i}\mbox{ for }i\neq r,\mbox{ and } \nu_{2r-1}=\nu_{2r}=\ll_{2r-1}-1=\ll_{2r}-1.\]
 Since $z\in\ker(Q_p \onto Q_{s-1})$ it follows that $s\leq t\leq p$. We consider two cases:
 \begin{itemize}
  \item $r\neq s$. Then $\nu_{2s}=\ll_{2s}\geq 2s-n$, and $\nu_{2s+1}\leq\ll_{2s+1}\leq 2s-n+1$. So $\nu\in B(s)$ and $z_{\nu}\in Q_p^s$.
  \item $r=s$. If $\ll_{2s}>2s-n$ then $\nu_{2s}\geq 2s-n$ and $z_{\nu}\in Q_p^s$. If $\ll_{2s}=2s-n$ then $\nu_{2s}=2s-n-1$, so $\nu\not\in B(s)$. Thus, we must have $\nu\in B(t)$ for some $t>s$. However, this implies that
  \[ 2t-n\leq \nu_{2t}\leq \nu_{2s} = 2s-n-1,\]
  which yields $t < s$, a contradiction. \qedhere
 \end{itemize}
\end{proof}

Thus, the non-split exact sequence of $\D$-modules (\ref{eq:ses-Qp-skew}) splits as $\C[\partial_{ij}]$-modules.

\begin{corollary}\label{cor:drQp-skew}
We have a decomposition of complexes 
\[DR(Q_p) = \bigoplus_{s=0}^p DR(D_s).\]
In particular, 
 \[H^i_{dR}(Q_p) = \bigoplus_{s=0}^p H^i_{dR}(D_s).\]
\end{corollary}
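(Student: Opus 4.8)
The plan is to run, for skew-symmetric matrices, the same argument that produced Corollary~\ref{cor:drQp} out of Lemma~\ref{lem:Qps}. The mechanism is that the de Rham complex $DR(M)$ of \eqref{eq:drcomp}, whose differentials \eqref{eq:diff-derham} are built solely from the operators $\pd_{ij}$, depends on $M$ only through its $\C[\pd_{ij}]$-module structure: fixing the coordinate basis $\{dx_J\}$ of the free $\mc{O}_X$-module $\Omega^\bullet_X$ identifies $DR(-)$ with the functor $(-)\oo_\C\bw^\bullet\C^{d_X}$ equipped with the Koszul differential assembled from the commuting endomorphisms $\pd_{ij}$. In particular $DR(-)$ is additive on finite direct sums of $\C[\pd_{ij}]$-modules, and it sends an isomorphism of $\C[\pd_{ij}]$-modules to an isomorphism of complexes.

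Granting this, it is enough to check that $Q_p\cong\bigoplus_{s=0}^p D_s$ as $\C[\pd_{ij}]$-modules, and I would prove this by induction on $p$; the base case $Q_0=D_0$ is trivial. For the inductive step one uses the fact noted immediately after Lemma~\ref{lem:Qps-skew}: because each $G$-summand $Q_p^s$ of the decomposition $Q_p=\bigoplus_{s=0}^p Q_p^s$ is $\pd_{ij}$-stable by Lemma~\ref{lem:Qps-skew}, the (otherwise non-split) exact sequence of $\D$-modules \eqref{eq:ses-Qp-skew} splits in the category of $\C[\pd_{ij}]$-modules. Hence $Q_p\cong D_p\oplus Q_{p-1}$ as $\C[\pd_{ij}]$-modules, and the induction hypothesis $Q_{p-1}\cong\bigoplus_{s=0}^{p-1}D_s$ yields the claim. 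Applying $DR$ to this decomposition and invoking the additivity from the first paragraph gives $DR(Q_p)=\bigoplus_{s=0}^p DR(D_s)$; taking cohomology then produces $H^i_{dR}(Q_p)=\bigoplus_{s=0}^p H^i_{dR}(D_s)$ for every $i$.

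I do not expect a genuine obstacle. The one substantive input --- that the $G$-module splitting of \eqref{eq:ses-Qp-skew} is simultaneously a splitting of $\C[\pd_{ij}]$-modules --- is precisely Lemma~\ref{lem:Qps-skew}, already established above; the remainder is a formal manipulation of the de Rham (Koszul) complex, identical to the passage from Lemma~\ref{lem:Qps} to Corollary~\ref{cor:drQp} in the case of general matrices.
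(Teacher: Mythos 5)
Your proposal is correct and follows essentially the same route as the paper: Lemma~\ref{lem:Qps-skew} shows the $G$-isotypic summands $Q_p^s$ are stable under the derivations, so the non-split sequence \eqref{eq:ses-Qp-skew} splits as $\C[\pd_{ij}]$-modules, and since the de Rham differentials \eqref{eq:diff-derham} use only the $\C[\pd_{ij}]$-structure, the complex (and hence its cohomology) decomposes. Your explicit induction on $p$ merely spells out what the paper treats as immediate, so there is nothing substantive to change.
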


We note that the analogues of Remarks \ref{rem:topo} and \ref{rem:dual} hold in the Pfaffian setting as well. 

\section{The case of symmetric matrices}
\label{sec:DeRham-symmetric}

In this section we let $X=\Sym^2\bb{C}^n$ denote the space of $n\times n$ symmetric matrices, endowed with the natural action of $G=\GL_n$. The orbits of the $G$-action on $X$ are denoted by $O_p$, where $O_p$ consists of symmetric matrices of rank $p$, $0\leq p\leq n$. The goal of this section is to prove the following result, which combined with \eqref{eq:qbin-pos} implies part c) of Theorems~\ref{thm:main2},~\ref{thm:di=0}, and~\ref{thm:main} (as before, we disregard the case $p=n$ when $\ol{O}_p=X$).

\begin{theorem}\label{thm:main-symm}
Suppose that $0\leq p<n$, let $m=\lfloor n/2\rfloor$, and define
\[ \epsilon_p = \begin{cases}
1 & \text{if }p\text{ is even and }n=2m+1\text{ is odd}; \\
0 & \text{otherwise}.
\end{cases}
\]
\begin{itemize} 
\item[(a)] The generating function for de Rham cohomology of local cohomology modules is
\[ \sum_{i,j\geq 0} h^i_{dR}(H^j_{\ol{O}_p}(S))\cdot q^i \cdot w^j  =
 \sum_{\substack{s=0 \\ s 
\equiv p \!\!\!\! \pmod{2}}}^p \! q^{{n-s+1\choose 2}} \cdot {m+\epsilon_p \choose 
\lfloor \frac{s}{2} \rfloor}_{q^4}
\cdot w^{1+\binom{n-s+1}{2}-\binom{p-s+2}{2}} \cdot \binom{\lfloor\frac{n-s-1}{2}\rfloor}{\frac{p-s}{2}}_{w^{-4}}.
\]
\item[(b)] The Hilbert--Poincar\'e polynomial for the Borel--Moore homology of the orbit closures is given by
\[
\sum_{i\geq 0} h_i^{BM}(\ol{O}_p) \cdot q^i \, = \, \sum_{\substack{s=0 \\ s 
\equiv p \!\!\!\! \pmod{2}}}^p \!
q^{2\binom{n+1}{2}+\binom{p-s+2}{2}-2\binom{n-s+1}{2}-1} \cdot {m+\epsilon_p \choose 
\lfloor \frac{s}{2} \rfloor}_{q^{-4}} \cdot \binom{\lfloor\frac{n-s-1}{2}\rfloor}{\frac{p-s}{2}}_{q^{4}}.
\]
\item[(c)] The \v Cech--de Rham spectral sequence \eqref{eq:CdR-sp-seq} degenerates at the $E_2$ page, and the maps $d_i$ in \eqref{eq:les-BM-hom} vanish if $n-p$ is even or if $p=1$.
\end{itemize}
\end{theorem}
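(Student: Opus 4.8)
The plan is to run, for symmetric matrices, exactly the argument used for Theorems~\ref{thm:main-generic} and~\ref{thm:main-skew}. Two inputs are imported from the literature: the de Rham cohomology of the simple equivariant $\mathcal{D}_X$-modules $D_s=\mathcal{L}(\overline{O}_s,X)$, computed in \cite{euler}, and the formal identity from \cite{raicu-weyman-loccoh} expressing the class $[H^j_{\overline{O}_p}(S)]$ in the Grothendieck group of $\operatorname{mod}_G(\mathcal{D}_X)$ as a sum of classes $[D_s]$ with $s\equiv p\pmod 2$ (only such $D_s$ occur as composition factors of $H^j_{\overline{O}_p}(S)$, even though $\operatorname{mod}_G(\mathcal{D}_X)$ contains additional simple objects in the symmetric case). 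Since de Rham cohomology is subadditive along short exact sequences, combining the two inputs gives at once the statement of part~(a) with $\leq$ in place of $=$, and, specializing $q=w=1$, the bound $\rho^{tot}(\overline{O}_p)\leq N_p$ where
\[ N_p=\sum_{\substack{s=0 \\ s\equiv p\,(2)}}^{p}\binom{m+\epsilon_p}{\lfloor s/2\rfloor}\binom{\lfloor(n-s-1)/2\rfloor}{(p-s)/2}. \]

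The crux is the identity $b^{tot}(O_p)=N_p+N_{p-1}$, valid whenever $n-p$ is even, where by convention $N_n=1$ (consistent with $\overline{O}_n=X$, $b_{tot}^{BM}(X)=\rho^{tot}(X)=1$). I would prove it by substituting the value of $b^{tot}(O_p)$ from Theorem~\ref{thm:coho-orbit}(c) and the closed form of $N_p$ above, reducing to a binomial identity; the parities $n$ even and $n$ odd yield two slightly different identities, each settled by Pascal's rule together with the trinomial revision $\binom{a}{b}\binom{a-b}{c-b}=\binom{a}{c}\binom{c}{b}$, in the spirit of Lemma~\ref{lem:btot-Op-Np-generic}. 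The one point requiring care is $\epsilon_p$: when $n$ is odd it enters $N_{2r}$ through a term $\binom{m+1}{\cdot}$ rather than $\binom{m}{\cdot}$, and one must first split $\binom{m+1}{a}=\binom{m}{a}+\binom{m}{a-1}$ before the sums collapse. Granting this identity, the chain
\[ b^{tot}(O_p)\overset{\eqref{eq:btotOp-leq-btotBM}}{\leq}b_{tot}^{BM}(\overline{O}_p)+b_{tot}^{BM}(\overline{O}_{p-1})\overset{\eqref{eq:btotBM-leq-btotdR}}{\leq}\rho^{tot}(\overline{O}_p)+\rho^{tot}(\overline{O}_{p-1})\leq N_p+N_{p-1} \]
is forced to be a chain of equalities for every $p$ with $n-p$ even. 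This yields: degeneration of \eqref{eq:CdR-sp-seq} for both $\overline{O}_p$ and $\overline{O}_{p-1}$; the equalities $\rho^{tot}(\overline{O}_p)=N_p$ and $\rho^{tot}(\overline{O}_{p-1})=N_{p-1}$, hence equality in part~(a) for both; and the vanishing of all $d_i$ at level $p$, i.e.\ part~(c) in the case $n-p$ even.

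Since consecutive integers have opposite parities, every orbit closure $\overline{O}_p$ with $p<n$ is reached by the previous paragraph — directly when $n-p$ is even, and as the $\overline{O}_p$-term of the level-$(p+1)$ chain when $n-p$ is odd (here $p+1\leq n$, the boundary instance $p+1=n$ using $N_n=1$ and the trivial degeneration of \eqref{eq:CdR-sp-seq} for $X$). Hence \eqref{eq:CdR-sp-seq} degenerates for all $\overline{O}_p\subsetneq X$ and part~(a) holds in general. Part~(b) then follows formally from part~(a): set $w=q$, use $d_X=\binom{n+1}{2}$ and the degeneration just proved to read off $\sum_k h_k^{BM}(\overline{O}_p)\,q^{2d_X-k}$, replace $q\mapsto q^{-1}$, multiply by $q^{2d_X}$, and apply \eqref{eq:qbin-pos}, exactly as at the end of Section~\ref{subsec:proof-skew}.

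Part~(c) is now complete except for the vanishing of the $d_i$ when $p=1$ and $n-1$ is odd (the case $n-1$ even being already covered). Here part~(b) specializes to $\sum_i h_i^{BM}(\overline{O}_1)\,q^i=q^{2n}$ (cf.\ Example~\ref{ex:p=1}(c)), so $b_{tot}^{BM}(\overline{O}_1)=b_{tot}^{BM}(\overline{O}_0)=1$, while $b^{tot}(O_1)=2$ by Theorem~\ref{thm:coho-orbit}(c); thus \eqref{eq:btotOp-leq-btotBM} is an equality at level $1$, which is equivalent to $d_i=0$ for all $i$. The step I expect to be the main obstacle is the combinatorial identity $b^{tot}(O_p)=N_p+N_{p-1}$ — not for conceptual reasons, but because of the parity case split, the bookkeeping of $\epsilon_p$, and the need to include the boundary value $p=n$ in order to reach $\overline{O}_{n-1}$.
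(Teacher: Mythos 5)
Your proposal is correct and follows essentially the same route as the paper: the same two inputs (de Rham cohomology of the simples $D_s$ from \cite{euler} and the composition-factor identity \eqref{eq:locDpsym}), the same key identity $b^{tot}(O_p)=N_p+N_{p-1}$ for $n-p$ even including the boundary convention $N_n=1$, and the same squeeze argument propagated to all $\ol{O}_p\subsetneq X$ via the level-$(p+1)$ chain when $n-p$ is odd. The only cosmetic deviation is that you deduce the $p=1$ case of part (c) from the already-established part (b), whereas the paper's Lemma~\ref{lem:btot-Op-Np-symm} gets it directly from $N_1=N_0=1$ and $b^{tot}(O_1)=2$; the two are equivalent.
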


\subsection{The proof of Theorem~\ref{thm:main-symm}}\label{subsec:proof-symm}

For $p$ with $0\leq p \leq n$, we let $D_p=\mc{L}(\ol{O}_p, X)$ denote the intersection homology $\D$-module corresponding to the trivial local system on the orbit $O_p$. Unlike in the case of general and skew-symmetric matrices, $\opmod_{G}(\D_X)$ contains other simple modules (see \cite{raicu-dmods}*{Theorem~2.9}), but they do not contribute to the local cohomology groups $H^j_{\ol{O}_{p}}(S)$. Indeed, by \cite{raicu-weyman-loccoh}*{(1.5)}, the composition series of local cohomology modules is encoded for $p<n$ by 
\begin{equation}\label{eq:locDpsym}
\sum_{j\geq 0} [H^j_{\ol{O}_{p}}(S)] \cdot w^j = \sum_{\substack{s=0 \\ s 
\equiv p \!\!\!\! \pmod{2}}}^p \! [D_s] 
\cdot w^{1+\binom{n-s+1}{2}-\binom{p-s+2}{2}} \cdot \binom{\lfloor\frac{n-s-1}{2}\rfloor}{\frac{p-s}{2}}_{w^{-4}}.
\end{equation}
Moreover, by \cite[Theorem 5.1]{euler} we have
\begin{equation}\label{eq:deRham-Dp-symm}
\sum_{i\geq 0} h^i_{dR}(D_s) \cdot q^i =  {m+\epsilon_s \choose \lfloor \frac{s}{2} \rfloor}_{q^4} \cdot q^{{n-s+1\choose 2}},
\end{equation}
which combined with \eqref{eq:locDpsym} yields (note that $\epsilon_p=\epsilon_s$ when $s\equiv p \pmod{2}$)
\begin{equation}\label{eq:ineq-gen-fun-DR-loccoh-symm} 
 \sum_{i,j\geq 0} h^i_{dR}(H^j_{\ol{O}_p}(S))\cdot q^i \cdot w^j  \leq
 \sum_{\substack{s=0 \\ s 
\equiv p \!\!\!\! \pmod{2}}}^p \! q^{{n-s+1\choose 2}} \cdot {m+\epsilon_p \choose 
\lfloor \frac{s}{2} \rfloor}_{q^4}
\cdot w^{1+\binom{n-s+1}{2}-\binom{p-s+2}{2}} \cdot \binom{\lfloor\frac{n-s-1}{2}\rfloor}{\frac{p-s}{2}}_{w^{-4}}.
\end{equation}
Specializing to $q=w=1$, we obtain
\[\rho^{tot}(\ol{O}_p) \leq N_p := \sum_{\substack{s=0 \\ s 
\equiv p \!\!\!\! \pmod{2}}}^p \! {m+\epsilon_p \choose 
\lfloor \frac{s}{2} \rfloor} \cdot \binom{\lfloor\frac{n-s-1}{2}\rfloor}{\frac{p-s}{2}}.
\]
It will be useful to extend the above formulas to $p=n$, where
\[  N_n := \rho^{tot}(\ol{O}_n) = \rho^{tot}(X) \overset{\eqref{eq:CdR-for-X}}{=} 1.\]

\begin{lemma}\label{lem:btot-Op-Np-symm}
 If $p\leq n$ and $n-p$ is even, or if $p=1$, then \eqref{eq:btotOp=NpNpm1} holds.
\end{lemma}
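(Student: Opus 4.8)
The strategy is the same as for Lemma~\ref{lem:btot-Op-Np-generic}: I would compare the explicit value of $b^{tot}(O_p)$ furnished by Theorem~\ref{thm:coho-orbit}(c) with the combinatorial sum defining $N_p+N_{p-1}$, and reduce the desired equality \eqref{eq:btotOp=NpNpm1} to elementary manipulations of binomial coefficients; the only genuinely new ingredient compared to the general-matrix case is some parity bookkeeping caused by the quantities $\epsilon_p$.

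The first step is to resolve the floor functions in the definition of $N_p$ under the hypothesis that $n-p$ is even. Setting $k=(n-p)/2$ and substituting $s=p-2j$ (so that $0\le j\le\lfloor p/2\rfloor$) one computes $\lfloor s/2\rfloor=\lfloor p/2\rfloor-j$ and $\lfloor(n-s-1)/2\rfloor=k+j-1$, whence
\[N_p=\sum_{j=0}^{\lfloor p/2\rfloor}\binom{m+\epsilon_p}{\lfloor p/2\rfloor-j}\binom{k+j-1}{j},\]
and the same substitution applied to $N_{p-1}$ gives an analogous expression (since $n-(p-1)$ has the opposite parity, the lower index of the second binomial factor becomes $k+j$ instead of $k+j-1$). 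Because $n-p$ is even, $n$ and $p$ have the same parity, so it suffices to handle the two cases ``$n$ even'' and ``$n$ odd''; the remaining case $p=1$, which fails to be subsumed under ``$n-p$ even'' only when $n$ is even, is immediate since then $N_1=N_0=1$ whereas $b^{tot}(O_1)=2$ by Theorem~\ref{thm:coho-orbit}(c). One also checks painlessly that the displayed formula for $N_p$ specializes to $N_n=1$ under the convention $\binom{-1}{0}=1$, so the boundary case $p=n$ causes no trouble.

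When $n$ is even, so $m=n/2$, $p=2r$, and $\epsilon_p=\epsilon_{p-1}=0$, the substitution produces $N_{2r}=\sum_{t=0}^{r}\binom{m}{t}\binom{m-t-1}{r-t}$ and $N_{2r-1}=\sum_{t=0}^{r-1}\binom{m}{t}\binom{m-t-1}{r-t-1}$; adding and applying Pascal's rule $\binom{m-t-1}{r-t}+\binom{m-t-1}{r-t-1}=\binom{m-t}{r-t}$ (with $\binom{a}{-1}=0$ accounting for the missing $t=r$ term) gives $\sum_{t=0}^{r}\binom{m}{t}\binom{m-t}{r-t}$, and then the Chu--Vandermonde identity $\binom{m}{t}\binom{m-t}{r-t}=\binom{m}{r}\binom{r}{t}$ together with $\sum_t\binom{r}{t}=2^r$ yields $N_{2r}+N_{2r-1}=\binom{m}{r}2^r=b^{tot}(O_{2r})$. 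When $n$ is odd, so $m=(n-1)/2$, $p=2r+1$, and $\epsilon_p=0$ but $\epsilon_{p-1}=1$, the substitution produces $N_{2r+1}=\sum_{t=0}^{r}\binom{m}{t}\binom{m-t-1}{r-t}$ and $N_{2r}=\sum_{t=0}^{r}\binom{m+1}{t}\binom{m-t}{r-t}$; I would then expand $\binom{m+1}{t}=\binom{m}{t}+\binom{m}{t-1}$, observe that the $\binom{m}{t}$-part equals $\binom{m}{r}2^r$ directly by Chu--Vandermonde, and that the $\binom{m}{t-1}$-part, after the reindexing $t\mapsto t+1$, merges with $N_{2r+1}$ through the same Pascal-then-Vandermonde step and also equals $\binom{m}{r}2^r$, giving $N_{2r+1}+N_{2r}=\binom{m}{r}2^{r+1}=b^{tot}(O_{2r+1})$.

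The only real obstacle here is bookkeeping rather than anything conceptual: one must keep straight the discrepancy $\epsilon_p\neq\epsilon_{p-1}$ in the odd-$n$ case, handle the degenerate boundary terms ($j=0$, $t=0$, $t=r$) with the standard conventions, and confirm that the summation ranges match up after each Pascal/reindexing move. No input is required beyond Theorem~\ref{thm:coho-orbit}(c) and textbook binomial identities.
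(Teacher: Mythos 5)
Your proposal is correct and follows essentially the same route as the paper: compare $N_p+N_{p-1}$ with $b^{tot}(O_p)$ from Theorem~\ref{thm:coho-orbit}(c), split according to the parity of $n$ (keeping track of $\epsilon_p\neq\epsilon_{p-1}$ when $n$ is odd), and reduce to Pascal plus Chu--Vandermonde, with $p=1$ checked directly. The only cosmetic difference is that the paper treats $p=n$ as a separate case (using $N_n:=1$ and summing $N_{n-1}$ explicitly), whereas you absorb it into the general computation via the convention $\binom{-1}{0}=1$, which you correctly verify is consistent.
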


\begin{proof}
Suppose first that $p=1$, and note that $N_1=N_0=1$. By Theorem~\ref{thm:coho-orbit}(c) we have $b^{tot}(O_1)=2$, hence \eqref{eq:btotOp=NpNpm1} holds. We therefore assume from now on that $n-2p$ is even. 

Suppose first that $p<n$. If $n=2m$ and $p=2r$ are even, then we have (putting $t=\lfloor s/2 \rfloor$)
\begin{equation}\label{eq:NpNp1-even-case}
N_p+N_{p-1} = \sum_{t=0}^r{m\choose t}\cdot{m-t-1\choose r-t} + 
\sum_{t=0}^{r-1}{m\choose t}\cdot{m-t-1\choose r-t-1} = {m\choose r}\cdot 2^r,
\end{equation}
where the last equality follows from the identity in the proof of Lemma~\ref{lem:btot-Op-Np-generic}. The equality \eqref{eq:btotOp=NpNpm1} now follows from Theorem~\ref{thm:coho-orbit}(c).

If $n=2m+1$ and $p=2r+1$ are odd then we have
\[N_p+N_{p-1} = \sum_{t=0}^r{m\choose t}\cdot{m-t-1\choose r-t} + 
\sum_{t=0}^{r}{m+1\choose t}\cdot{m-t\choose r-t}.\]
Using the fact that ${m+1\choose t} = {m\choose t} + {m\choose t-1}$ and the second equality in \eqref{eq:NpNp1-even-case}, we conclude that
\[ N_p+N_{p-1} = {m\choose r}\cdot 2^r + \sum_{t=0}^r{m\choose t}\cdot{m-t\choose r-t} = {m\choose r}\cdot 2^r + \sum_{t=0}^r{m\choose r}\cdot{r\choose t} = {m\choose r}\cdot 2^r + {m\choose r}\cdot 2^r = {m\choose r}\cdot 2^{r+1}.\]
The equality \eqref{eq:btotOp=NpNpm1} follows again from Theorem~\ref{thm:coho-orbit}(c).

Finally, assume that $p=n$, so that $N_p=1$. If $n=2m$ then
\[N_{p-1}=N_{n-1} = \sum_{t=0}^{m-1}{m\choose t}=2^m-1,\]
hence $N_p+N_{p-1}=2^m = b^{tot}(O_n)$. If $n=2m+1$ then
\[N_{p-1}=N_{n-1} = \sum_{t=0}^{m}{m+1\choose t}=2^{m+1}-1,\]
hence $N_p+N_{p-1}=2^{m+1} = b^{tot}(O_n)$, concluding our proof.
\end{proof}

Since the equality \eqref{eq:btotOp=NpNpm1} implies the degeneration of the spectral sequence \eqref{eq:CdR-sp-seq} for both $\ol{O}_p$ and $\ol{O}_{p-1}$, it follows that in order to prove \eqref{eq:CdR-sp-seq} degenerates for all $p$ it suffices to prove that \eqref{eq:btotOp=NpNpm1} holds for every other value of $p$. This is indeed the case by Lemma~\ref{lem:btot-Op-Np-symm}, hence the first part of Theorem~\ref{eq:locDpsym}(c) holds. It also follows from Lemma~\ref{lem:btot-Op-Np-symm} that \eqref{eq:HOp-leq-hBM} is an equality when $p=1$ or $n-p$ is even, hence as explained in the Introduction, the last conclusion of Theorem~\ref{eq:locDpsym}(c) holds. Parts (a) and (b) of Theorem~\ref{eq:locDpsym} now follow from the fact that \eqref{eq:ineq-gen-fun-DR-loccoh-symm} must be an equality, as in the case of general and skew-symmetric matrices.

\subsection{De Rham cohomology for the modules $Q_p$}\label{subsec:Lyub-comparison-sym}
As mentioned in the Introduction, unlike for general and skew-symmetric matrices, the Lyubeznik numbers of $\ol{O}_p$ are unknown in the symmetric case. Furthermore, the explicit $\D$-module decomposition of the local cohomology modules $H_{\ol{O}_p}^i(S)$ is also not known in general. Nevertheless, we mention some partial results to this end, that lead naturally to the consideration of certain $\D$-modules $Q_p$ analogous to the ones considered in Sections \ref{subsec:Lyub-comparison} and \ref{subsec:Lyub-comparison-skew}.

Due to (\ref{eq:locDpsym}) and \cite[Theorem 5.9]{lor-wal}, when $n-p$ is even ($0\leq p <n$), the $\D$-modules $H_{\ol{O}_p}^i(S)$ are semisimple. In particular, this readily proves in this case that equality holds in (\ref{eq:ineq-gen-fun-DR-loccoh-symm}). Additionally, if $n$ is even (so $p$ is also even), then $\mc{F}(D_p) \cong D_{n-p}$ by \cite[Remark 1.5]{raicu-dmods}, hence the \v Cech-de Rham numbers of $\ol{O}_p$ yield the Lyubeznik numbers of $\ol{O}_{n-p}$ by (\ref{eq:pushpull}). On the other hand, if $n$ is odd (so $p$ is also odd) then $\mc{F}(D_p)$ is a simple equivariant $\D$-module corresponding to a non-trivial local system of an orbit \cite[Remark 1.5]{raicu-dmods}, thus we do not obtain Lyubeznik numbers in this way.

From now on we assume that $n-p$ is odd. We write $S$ for the coordinate ring of $X$, and in order to make the formulas below uniform, we set $D_{n+1}:=S$. For $0\leq p \leq n+1$ (with $n-p$ odd) we consider the following indecomposables $Q_p \in \op{mod}_G(\D_X)$ (cf. \cite[Section 5.3]{lor-wal}):
\[Q_{n+1}= S_{\op{sdet}}, \qquad Q_p = \dfrac{S_{\op{sdet}}}{\langle \op{sdet}^{(p-n+1)/2} \rangle} \quad (0\leq p \leq n-1),\]
where $S_{\op{sdet}}$ denotes the localization of $S$ at the symmetric determinant. We have short exact sequences
\begin{equation}\label{eq:ses-Qp-sym}
 0 \lra D_p \lra Q_p \lra Q_{p-2} \lra 0.
\end{equation}
We note that for $p<n$ (with $n-p$ odd) the $\D$-modules $H_{\ol{O}_p}^i(S)$ are not semisimple in general. In fact, by \cite[Lemma 3.11]{lor-wal} (see also \cite[Lemma 2.4]{binary}) and \cite[Theorem 5.9]{lor-wal}, we have $H_{\ol{O}_p}^{\codim_X O_p}(S) \cong Q_p$. Based on empirical evidence, and on the case of general and skew-symmetric matrices, we conjecture that all the $\D$-modules $H_{\ol{O}_p}^i(S)$ are direct sums of the indecomposables $Q_s$, with $s\leq p$ and $s \equiv p \pmod{2}$.

As in Sections \ref{subsec:Lyub-comparison} and \ref{subsec:Lyub-comparison-skew}, we now show that the non-split exact sequence of $\D$-modules (\ref{eq:ses-Qp-sym}) splits in the category of $\C[\partial_{ij}]$-modules, which via de Rham cohomology gives further indication for the validity of the conjecture due to the fact that equality holds in (\ref{eq:ineq-gen-fun-DR-loccoh-symm}).

\smallskip

We write $S = \Sym(\Sym^2 V)$, so that $X=\Spec(S) = \Sym^2 V^\vee$, with $\dim V = n$. We consider the decomposition of the simple $\D$-modules $D_p$ as a direct sum of irreducible $G$-representations, which is given in \cite{raicu-dmods}*{Theorem~4.1}.
For $0\leq p \leq n+1$ (with $n-p$ odd), we have
\begin{equation}\label{eq:symm-Dp-odd}
 D_p = \bigoplus_{\ll \in \mc{C}(p)}\bb{S}_{\ll} V
\end{equation}
where
\[\mc{C}(p) \, = \, \{\ll\in\bb{Z}^{n}_{\dom}:\ll_i\overset{(\opmod\ 2)}{\equiv} 0\rm{ for }i=1,\cdots,n,\ \ll_{p-1}\geq p-n-1 \geq\ll_{p+1}\}.\]

We have a decomposition of $Q_p$ as a $G$-representation
\[ Q_p = \bigoplus_{\substack{s=0 \\ s 
\equiv p \!\!\!\! \pmod{2}}}^p Q_p^s,\]
where $Q_p^s \simeq D_s$. The next two results are the analogues of Lemma \ref{lem:Qps} and Corollary \ref{cor:drQp}.

\begin{lemma}\label{lem:Qps-sym}
 If $\pd\in \Sym^2 V^\vee$ is a derivation and $z\in Q_p^s$ then $\pd(z)\in Q_p^s$. 
\end{lemma}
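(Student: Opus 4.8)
The plan is to follow the template of Lemma~\ref{lem:Qps} and Lemma~\ref{lem:Qps-skew}, with the extra bookkeeping forced by the index shift in the definition of $\mc{C}(p)$ (whose conditions involve $\ll_{p-1}$ and $\ll_{p+1}$ rather than $\ll_{p}$ and $\ll_{p+1}$) and by the requirement that all parts be even. By linearity, I would first reduce to the case in which $z$ lies in a single irreducible summand $\bb{S}_{\ll}V$ with $\ll\in\mc{C}(s)$; in particular all parts of $\ll$ are even. Since $\pd\in\Sym^2 V^\vee$, it follows from Pieri's rule \cite[Corollary 2.3.5]{weyman} and the fact that $Q_p$ is closed under the action of $\pd$ that $\pd(z)=\sum_{\nu}z_{\nu}$, where each $z_{\nu}\in\bb{S}_{\nu}V$ has $\nu\in\mc{C}(t)$ for some $t\leq p$, and $\nu$ is obtained from $\ll$ by removing two boxes, no two of which lie in the same column. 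The first key point is that, since both $\ll$ and $\nu$ have all parts even, these two boxes must be the two rightmost boxes of a single row: there is an index $r$ with $\nu_r=\ll_r-2$ and $\nu_i=\ll_i$ for $i\neq r$.

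Next I would observe that $\pd(z)$ lies in the $\D$-submodule $\ker(Q_p\onto Q_{s-2})$ of $Q_p$, with the convention $Q_{-2}=Q_{-1}=0$ so that this is vacuous when $s\leq 1$. Since this submodule is stable under $\pd$ and equals $\bigoplus_{t\geq s}Q_p^t$ as a $G$-module, every $\nu$ with $z_{\nu}\neq 0$ must satisfy $t\geq s$ (when $s\leq 1$ this already follows from $t\geq 0$ and the parity $t\equiv p\pmod 2$). It then remains to prove $t=s$, equivalently $\nu\in\mc{C}(s)$, i.e. $\nu_{s-1}\geq s-n-1\geq\nu_{s+1}$. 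Since $\nu_i\leq\ll_i$ for all $i$, the inequality $\nu_{s+1}\leq\ll_{s+1}\leq s-n-1$ holds automatically, and $\nu_{s-1}=\ll_{s-1}\geq s-n-1$ whenever $r\neq s-1$; so for $r\neq s-1$ we conclude $\nu\in\mc{C}(s)$, hence $z_{\nu}\in Q_p^s$.

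The only remaining case (necessarily with $s\geq 2$), which I expect to be the main obstacle, is $r=s-1$ together with $\ll_{s-1}\leq s-n$ --- the only range in which $\nu_{s-1}=\ll_{s-1}-2<s-n-1$, bearing in mind that $\ll_{s-1}$ is even and satisfies $\ll_{s-1}\geq s-n-1$. Here $\nu_{s-1}=\ll_{s-1}-2\leq s-n-2=(s-1)-n-1$, so if $\nu\in\mc{C}(t)$ with $t\geq s$ then, using $t-1\geq s-1$ and the monotonicity of $\nu$, we obtain $t-n-1\leq\nu_{t-1}\leq\nu_{s-1}\leq(s-1)-n-1$, which forces $t\leq s-1$, a contradiction; hence $z_{\nu}=0$ in this case. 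Assembling all cases then yields $\pd(z)\in Q_p^s$. The two inputs I am taking for granted --- the exact form of Pieri's rule for the action of $\Sym^2 V^\vee$, and the identification $\ker(Q_p\onto Q_{s-2})=\bigoplus_{t\geq s}Q_p^t$ as $G$-modules (which rests on the sets $\mc{C}(t)$ with $t\equiv p\pmod 2$ being pairwise disjoint, so that $Q_p$ is multiplicity-free) --- are verified exactly as in the general and skew-symmetric cases, and I would only expand on them if necessary.
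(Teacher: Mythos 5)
Your proposal is correct and follows essentially the same argument as the paper: reduce to an isotypic component, apply Pieri's rule (with the evenness of the parts forcing the two removed boxes to lie in one row), use $z\in\ker(Q_p\onto Q_{s-2})$ to get $t\geq s$, and rule out the problematic case $r=s-1$ with $\ll_{s-1}$ small via the monotonicity contradiction $t-n-1\leq \nu_{t-1}\leq\nu_{s-1}$. The only differences are cosmetic (you make explicit the parity step and the identification of the kernel, and your case split uses $\ll_{s-1}\leq s-n$ where the paper uses $\ll_{s-1}=s-n-1$, which are equivalent by evenness).
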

\begin{proof}
We can assume that $z$ belongs to an isotypic component $\bb{S}_{\ll}V$, with $\ll\in \mc{C}(s)$. As $\pd\in \Sym^2 V^\vee$, it follows from Pieri's rule \cite[Corollary 2.3.5]{weyman} and the fact that $Q_p$ is closed under the action of $\pd$, that
 \[ \pd(z) = \sum_{\nu} z_{\nu},\]
 where for each $\nu$ we have that $z_{\nu}\in \bb{S}_{\nu}V$ for $\nu\in \mc{C}(t)$ with $t\leq p$ and $t\equiv p \pmod{2}$, and $\nu$ is obtained from $\ll$ by removing two boxes from the same row, i.e. there exists $r$ with $1\leq r\leq n$ such that
 \[ \nu_{i} = \ll_{i}\mbox{ for }i\neq r,\mbox{ and } \nu_{r} = \ll_{r}-2.\]
 Since $z\in\ker(Q_p \onto Q_{s-2})$ it follows that $s\leq t\leq p$. We have two cases:
 \begin{itemize}
  \item $r\neq s-1$. Then $\nu_{s-1}=\ll_{s-1}\geq s-n-1$, and $\nu_{s+1}\leq\ll_{s+1}\leq s-n-1$. So $\nu\in \mc{C}(s)$ and $z_{\nu}\in Q_p^s$.
  \item $r=s-1$. If $\ll_{s-1}>s-n-1$ (and so $\geq s-n+1$) then $\nu_{s-1}\geq s-n-1$ and $z_{\nu}\in Q_p^s$. If $\ll_{s-1}=s-n-1$ then $\nu_{s-1}=s-n-3$, so $\nu\not\in \mc{C}(s)$. Thus, we must have $\nu\in \mc{C}(t)$ for some $t>s$. However, then
  \[t-n-1\leq \nu_{t-1}\leq \nu_{s-1} = s-n-3,\]
  which yields $t < s$, a contradiction. \qedhere
 \end{itemize}
\end{proof}

\begin{corollary}\label{cor:drQp-sym}
We have a decomposition of complexes 
\[DR(Q_p) = \bigoplus_{\substack{s=0 \\ s 
\equiv p \!\!\!\! \pmod{2}}}^p DR(D_s).\]
In particular, 
 \[H^i_{dR}(Q_p) = \bigoplus_{\substack{s=0 \\ s 
\equiv p \!\!\!\! \pmod{2}}}^p H^i_{dR}(D_s).\]
\end{corollary}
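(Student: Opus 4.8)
The plan is to reproduce, in the symmetric setting, the short argument that yields Corollary~\ref{cor:drQp} from Lemma~\ref{lem:Qps} and Corollary~\ref{cor:drQp-skew} from Lemma~\ref{lem:Qps-skew}. The structural fact doing all the work was already isolated after \eqref{eq:diff-derham}: the differentials of the de Rham complex $DR(M)$ involve only the operators $\partial_{ij}$, so $DR(-)$ is an additive functor of the underlying $\C[\partial_{ij}]$-module of $M$. Hence, once one knows that the short exact sequence \eqref{eq:ses-Qp-sym} splits after restricting scalars to $\C[\partial_{ij}]$, one immediately gets $DR(Q_p)\cong DR(D_p)\oplus DR(Q_{p-2})$ as complexes, and an induction on $p$ with base cases $Q_0\cong D_0$ and $Q_1\cong D_1$ produces the asserted decomposition $DR(Q_p)\cong\bigoplus DR(D_s)$, the sum being over $s\le p$ with $s\equiv p\ (\mathrm{mod}\ 2)$; passing to cohomology then gives the formula for $H^i_{dR}(Q_p)$.

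So the real content is the splitting of \eqref{eq:ses-Qp-sym} over $\C[\partial_{ij}]$, which is exactly what Lemma~\ref{lem:Qps-sym} is set up to provide. First I would note that the decomposition $Q_p=\bigoplus_s Q_p^s$ of $Q_p$ as a $G$-representation is in fact a decomposition into $\C[\partial_{ij}]$-submodules: Lemma~\ref{lem:Qps-sym} says each $Q_p^s$ is stable under every derivation $\partial\in\Sym^2 V^\vee$, and since $\Sym^2 V^\vee$ generates $\C[\partial_{ij}]$ as a $\C$-algebra, this stability propagates to all of $\C[\partial_{ij}]$. Next, the inclusion $D_p\hookrightarrow Q_p$ is $\D_X$-linear, so it identifies $D_p$ with the $\C[\partial_{ij}]$-submodule $Q_p^p=\ker(Q_p\twoheadrightarrow Q_{p-2})$; the complementary $\C[\partial_{ij}]$-submodule $\bigoplus_{s<p}Q_p^s$ meets this kernel trivially and has the same $G$-character as $Q_{p-2}$, so --- the quotient map being $G$- and $\D_X$-equivariant --- its restriction to $\bigoplus_{s<p}Q_p^s$ is a $\C[\partial_{ij}]$-linear isomorphism onto $Q_{p-2}$. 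This exhibits the splitting, and the induction above finishes the proof.

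I do not expect a genuine obstacle here: the argument is entirely formal and runs parallel to the general and skew-symmetric cases of Sections~\ref{subsec:Lyub-comparison} and~\ref{subsec:Lyub-comparison-skew}. The only points requiring a moment's care are bookkeeping ones --- that $\Sym^2 V^\vee$ generates $\C[\partial_{ij}]$ (so that Lemma~\ref{lem:Qps-sym} really yields $\C[\partial_{ij}]$-submodules, not just $\partial$-stable subspaces), and the character count identifying the complement with $Q_{p-2}$. One point worth flagging, though not an obstacle for this statement, is that --- unlike in the earlier two cases --- we do not yet know the full $\D_X$-module decomposition of $H^j_{\ol{O}_p}(S)$ into copies of the $Q_s$; the de Rham identity proved here is, as noted in Section~\ref{subsec:Lyub-comparison-sym}, exactly one piece of supporting evidence for that conjectural decomposition.
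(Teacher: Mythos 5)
Your proposal is correct and matches the paper's argument: Lemma~\ref{lem:Qps-sym} shows the non-split sequence \eqref{eq:ses-Qp-sym} splits over $\C[\partial_{ij}]$, and since the de Rham differentials \eqref{eq:diff-derham} use only the $\C[\partial_{ij}]$-structure, the decomposition of $DR(Q_p)$ follows exactly as in Corollaries~\ref{cor:drQp} and~\ref{cor:drQp-skew}. The paper treats this as immediate, while you spell out the induction on $p$ and the identification of the complement $\bigoplus_{s<p}Q_p^s$ with $Q_{p-2}$; these details are fine and consistent with the text.
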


Note that the analogue of Remark \ref{rem:topo} holds in the symmetric setting as well.


\section{A related spectral sequence}\label{sec:spec2}

There is a spectral sequence similar to (\ref{eq:CdR-sp-seq}) involving singular cohomology and the local cohomology modules $H_{O_p}^i(\mc{O}_X)$, which also degenerates for most of our matrix orbits $O_p$.

First, consider the more general setting when $X$ is an affine space, and $Z\subset X$ a locally closed irreducible smooth subvariety. Consider the $\D$-module pushforward of the structure sheaf $\mc{O}_Z$ via the map $Z\to \{ pt \}$, which yields singular cohomology (see Introduction). If we factor this map as the composition $Z \to X\setminus\{\ol{Z}\setminus Z\} \to X \to \{pt\}$, we obtain the following spectral sequence of $\D$-modules (cf. \cite[Proposition 1.7.1]{htt})
\begin{equation}\label{eq:locclospec}
E_2^{ij}=H^i_{dR}(H^j_{Z}(\mc{O}_X))  \, \Longrightarrow \, H^{i+j-2 c}(Z),
\end{equation}
where $c=\codim_{X} Z$. Naturally, this is can also be viewed as a spectral sequence of mixed Hodge modules, as we did for the \v{C}ech--de Rham spectral sequence in Section \ref{sec:mhs}.

\begin{proposition}\label{prop:spec2}
With the notation above, assume that the \v{C}ech--de Rham spectral sequence
\[E_2^{ij}=H^i_{dR}(H^j_{Y}(\mc{O}_X)) \Longrightarrow H_{2d_X-i-j}^{BM}(Y)\]
degenerates on the second page for $Y=\ol{Z}$ and $Y=\ol{Z}\setminus Z$, and that the following maps $d_i$ are zero for all $i$:
\[\cdots\lra H_i^{BM}(\ol{Z}\setminus Z)\overset{d_i}{\lra} H_i^{BM}(\ol{Z})\lra H^{2 \dim Z-i}(Z)\lra H_{i-1}^{BM}(\ol{Z}\setminus Z)\overset{d_{i-1}}{\lra}H_{i-1}^{BM}(\ol{Z})\lra\cdots\]
Then the spectral sequence (\ref{eq:locclospec}) also degenerates on the second page, and we have for all $i,j\geq 0$
\[h_{dR}^i(H_{Z}^j(\mc{O}_X)) = h_{dR}^i(H_{\ol{Z}}^j(\mc{O}_X))+ h_{dR}^i(H_{\ol{Z}\setminus Z}^{j+1}(\mc{O}_X)).\]
\end{proposition}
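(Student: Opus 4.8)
The plan is to deduce both statements from the distinguished triangle that is common to the two spectral sequences. Write $W=\ol{Z}\setminus Z$, which is closed in $X$. The spectral sequence \eqref{eq:CdR-sp-seq} applied to $\ol{Z}$ and to $W$, and the spectral sequence \eqref{eq:locclospec} applied to $Z$, are nothing but the canonical truncation spectral sequences of $\pi_+R\Gamma_{\ol{Z}}(\mc{O}_X)$, $\pi_+R\Gamma_{W}(\mc{O}_X)$ and $\pi_+R\Gamma_{Z}(\mc{O}_X)$, where $R\Gamma_Y(\mc{O}_X)$ denotes the local cohomology complex with cohomology sheaves $H^j_Y(\mc{O}_X)$; these three complexes are linked by the distinguished triangle
\[ R\Gamma_W(\mc{O}_X)\overset{\varphi}{\lra}R\Gamma_{\ol{Z}}(\mc{O}_X)\lra R\Gamma_{Z}(\mc{O}_X)\overset{+1}{\lra} \]
in the derived category of regular holonomic $\D_X$-modules (cf. \cite[Proposition~1.7.1]{htt}). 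Taking cohomology sheaves $\mc{H}^j$ gives the long exact sequence of $\D_X$-modules
\[ \cdots\lra H^j_W(\mc{O}_X)\overset{\varphi_j}{\lra}H^j_{\ol{Z}}(\mc{O}_X)\lra H^j_Z(\mc{O}_X)\lra H^{j+1}_W(\mc{O}_X)\lra\cdots, \]
while applying $\pi_+$ and invoking \cite[Theorem~3.1(7)]{har-pol} together with Poincar\'e duality \eqref{eq:poinbm} for the smooth $Z$ returns exactly the Borel--Moore long exact sequence in the statement, with $\varphi$ inducing the maps $d_i$.

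I would begin by extracting two consequences of the hypotheses. Since every bounded complex of $\bb{C}$-vector spaces is formal and there are no nonzero maps between distinct shifts of vector spaces, a morphism in the derived category of $\bb{C}$-vector spaces that vanishes on all cohomology is the zero morphism; as the hypothesis $d_i=0$ says exactly that $H^i(\pi_+\varphi)=0$ for every $i$, we conclude $\pi_+\varphi=0$. On the other hand, the hypothesis that \eqref{eq:CdR-sp-seq} degenerates at $E_2$ for $\ol{Z}$ and for $W$ amounts to the existence of splittings $\pi_+R\Gamma_{\ol{Z}}(\mc{O}_X)\simeq\bigoplus_j\pi_+\bigl(H^j_{\ol{Z}}(\mc{O}_X)\bigr)[-j]$ and $\pi_+R\Gamma_{W}(\mc{O}_X)\simeq\bigoplus_j\pi_+\bigl(H^j_{W}(\mc{O}_X)\bigr)[-j]$ compatible with the canonical truncation filtrations. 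Matching the vanishing $\pi_+\varphi=0$ against these splittings, its associated graded component $\bigoplus_j\pi_+(\varphi_j)[-j]$ must vanish, hence $\pi_+\varphi_j=0$, and therefore $H^i_{dR}(\varphi_j)=0$ for all $i$ and $j$.

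With these two facts in hand, the scheme is to break the long exact sequence of $\D_X$-modules into the short exact sequences
\[ 0\to C_{j-1}\to H^j_W(\mc{O}_X)\to A_j\to 0,\quad 0\to A_j\to H^j_{\ol{Z}}(\mc{O}_X)\to B_j\to 0,\quad 0\to B_j\to H^j_Z(\mc{O}_X)\to C_j\to 0, \]
with $A_j=\operatorname{im}\varphi_j$, $B_j=H^j_{\ol{Z}}(\mc{O}_X)/A_j$ and $C_j=\operatorname{im}\bigl(H^j_Z(\mc{O}_X)\to H^{j+1}_W(\mc{O}_X)\bigr)$, and to use the de Rham long exact sequences of these, together with $H^i_{dR}(\varphi_j)=0$, to show that $H^i_{dR}(A_j)=0$ for all $i,j$. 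This yields bidegree-wise isomorphisms $H^i_{dR}(H^j_{\ol{Z}}(\mc{O}_X))\cong H^i_{dR}(B_j)$ and $H^i_{dR}(H^{j}_W(\mc{O}_X))\cong H^i_{dR}(C_{j-1})$, hence the inequalities $h^i_{dR}(H^j_Z(\mc{O}_X))\le h^i_{dR}(H^j_{\ol{Z}}(\mc{O}_X))+h^i_{dR}(H^{j+1}_W(\mc{O}_X))$ for all $i,j$. Summing over $i,j$ and comparing with the chain
\[ b_{tot}^{BM}(Z)\ \le\ \sum_{i,j}h^i_{dR}(H^j_Z(\mc{O}_X))\ \le\ \sum_{i,j}h^i_{dR}(H^j_{\ol{Z}}(\mc{O}_X))+\sum_{i,j}h^i_{dR}(H^j_W(\mc{O}_X))\ =\ b_{tot}^{BM}(\ol{Z})+b_{tot}^{BM}(W)\ =\ b_{tot}^{BM}(Z) \]
— whose first inequality is \eqref{eq:locclospec} via Poincar\'e duality for $Z$, whose first equality is the degeneration hypothesis for $\ol{Z}$ and $W$, and whose last equality is $d_i=0$ — forces equality throughout. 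In particular \eqref{eq:locclospec} degenerates at $E_2$, and the bidegree inequalities above become the claimed identities $h^i_{dR}(H^j_Z(\mc{O}_X))=h^i_{dR}(H^j_{\ol{Z}}(\mc{O}_X))+h^i_{dR}(H^{j+1}_W(\mc{O}_X))$.

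The main obstacle is the vanishing $H^i_{dR}(A_j)=0$: the functors $H^i_{dR}(-)$ are only subadditive on short exact sequences, so this does not follow formally from the fact that $\varphi_j$ induces zero on de Rham cohomology, and extracting it requires a careful chase through the de Rham long exact sequences of the three short exact sequences above, using in an essential way both that the $\D_X$-module maps $\varphi_j$ are acyclic for de Rham cohomology and that \eqref{eq:CdR-sp-seq} is already known to degenerate for $\ol{Z}$ and $W$. Once this is settled, the remainder is the purely formal dimension count outlined above.
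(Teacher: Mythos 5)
Your global strategy is the same as the paper's: sandwich the total dimensions, $b^{tot}(Z)\le \sum_{i,j}h^i_{dR}(H^j_Z(\mc{O}_X))\le \sum_{i,j}h^i_{dR}(H^j_{\ol{Z}}(\mc{O}_X))+\sum_{i,j}h^i_{dR}(H^j_{\ol{Z}\setminus Z}(\mc{O}_X))=b^{BM}_{tot}(\ol{Z})+b^{BM}_{tot}(\ol{Z}\setminus Z)=b^{BM}_{tot}(Z)$, and let the forced equalities give both the degeneration of \eqref{eq:locclospec} and the bidegree identities. The genuine gap is the step you yourself flag and postpone: the vanishing $H^i_{dR}(A_j)=0$ for $A_j=\op{im}\varphi_j$ is never proved, and it carries the entire content of the bidegree-wise inequality you need. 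It does not follow from what you did establish. Your first half only yields that each $\varphi_j$ induces zero on de Rham cohomology (that conclusion is fine, though the argument via chosen splittings is loose; the clean route is functoriality of the spectral sequences: when both degenerate at $E_2$, the induced map on $E_2$ coincides with the induced map on $E_\infty$, which is the associated graded of the zero map of abutments). But a morphism of holonomic $\D$-modules can induce zero on all de Rham cohomology while its image has nonzero de Rham cohomology: on $X=\bb{A}^1$, let $\delta=H^1_{\{0\}}(\mc{O}_X)$ and let $B$ be the nonsplit extension $0\to\delta\to B\to\mc{O}_X\to 0$, which exists since $\Ext^1_{\D}(\mc{O}_X,\delta)\cong H^1_{dR}(\delta)=\C$; the long exact sequence shows $H^\bullet_{dR}(B)=0$, so the inclusion $\delta\hra B$ kills all de Rham cohomology, yet its image $\delta$ has $h^1_{dR}(\delta)=1$. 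Hence the deferred ``careful chase'' cannot be a formal consequence of $H^i_{dR}(\varphi_j)=0$; it would have to re-invoke the degeneration hypotheses in some essential way that you do not indicate. As written, the proof is incomplete at its crucial point.

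For comparison, the paper's proof is shorter and never introduces the image modules: it breaks the long exact sequence of local cohomology for $Z\subset\ol{Z}$ to get the subadditivity $h^i_{dR}(H^j_Z(\mc{O}_X))\le h^i_{dR}(H^j_{\ol{Z}}(\mc{O}_X))+h^i_{dR}(H^{j+1}_{\ol{Z}\setminus Z}(\mc{O}_X))$ bidegree-wise, sums over $i,j$, identifies the two ends of the chain using \eqref{eq:locclospec} (with Poincar\'e duality for the smooth $Z$) and the two degeneration hypotheses, and then uses $d_i=0$ to force equality throughout; neither $H^i_{dR}(\varphi_j)=0$ nor anything about $\op{im}\varphi_j$ enters. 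Note that this subadditivity is exactly what your vanishing was meant to deliver, and it is weaker: in the paper's applications it is immediate because there the connecting maps $H^j_{\ol{Z}\setminus Z}(\mc{O}_X)\to H^j_{\ol{Z}}(\mc{O}_X)$ vanish (no common composition factors), so the long exact sequence splits into short exact sequences. If you want to complete your argument in the stated generality, target that inequality directly rather than the stronger vanishing of $H^i_{dR}(\op{im}\varphi_j)$; justifying it (equivalently, controlling the de Rham cohomology of the images of the connecting maps $d^j$ on local cohomology) is precisely the missing work.
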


\begin{proof}
Consider the long exact sequence in local cohomology corresponding to the inclusion $Z \subset \ol{Z}$:
\begin{equation}\label{eq:longloc}
\cdots \to H^i_{\ol{Z}\setminus Z}(\mc{O}_X) \xrightarrow{d^i} H^i_{\ol{Z}}(\mc{O}_X) \to H^i_Z(\mc{O}_X) \to H^{i+1}_{\ol{Z}\setminus Z}(\mc{O}_X) \xrightarrow{d^{i+1}} H^{i+1}_{\ol{Z}}(\mc{O}_X) \to \cdots
\end{equation}
In particular, we have for all $i,j\geq 0$ \[h_{dR}^i(H_{Z}^j(\mc{O}_X)) \leq h_{dR}^i(H_{\ol{Z}}^j(\mc{O}_X))+ h_{dR}^i(H_{\ol{Z}\setminus Z}^{j+1}(\mc{O}_X)).\]
Summing these up for all $i,j$,  the spectral sequence (\ref{eq:locclospec}) together with the degeneration of the two \v{C}ech-de Rham spectral sequences gives
\[b^{tot}(Z) \leq b^{BM}_{tot}(\ol{Z}) + b^{BM}_{tot}(\ol{Z}\setminus Z).\]
Now the vanishing of the maps $d_i$ implies that equality holds in all of the above inequalities (cf. also (\ref{eq:btotOp-leq-btotBM})), and that the spectral sequence (\ref{eq:locclospec}) degenerates as claimed.
\end{proof}

In the case of our matrix orbits $O_p$, Proposition \ref{prop:spec2} together with Theorems \ref{thm:di=0} and \ref{thm:main} readily yields the following result.

\begin{corollary}
When $Z=O_p$, the spectral sequence (\ref{eq:locclospec}) degenerates on the second page in all of the cases from Theorem \ref{thm:di=0}.
\end{corollary}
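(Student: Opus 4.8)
The plan is to obtain this directly from Proposition~\ref{prop:spec2} by taking $Z=O_p$, so that $\ol{Z}=\ol{O}_p$ and $\ol{Z}\setminus Z=\ol{O}_{p-1}$ (for $p=0$ the orbit is already closed and there is nothing to prove). First I would check that the hypotheses of Section~\ref{sec:spec2} are in place: in each of the three families $X$ is an affine space, and every orbit $O_p$ is a smooth, irreducible, locally closed subvariety of $X$, so the spectral sequence \eqref{eq:locclospec} is defined and $\ol{O}_p\setminus O_p=\ol{O}_{p-1}$.

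Next I would verify the two standing assumptions of Proposition~\ref{prop:spec2} for $Y=\ol{O}_p$ and $Y=\ol{O}_{p-1}$. The degeneration of the \v Cech--de Rham spectral sequence \eqref{eq:CdR-sp-seq} for these orbit closures is exactly Theorem~\ref{thm:main} when the closure is a proper subvariety of $X$, and it holds trivially when $\ol{O}_p=X$ (resp.\ $\ol{O}_m=X$ in the skew case) for the reason recorded in \eqref{eq:CdR-for-X}; so this hypothesis is satisfied in cases (a), (b), and (c) without restriction. The vanishing of the connecting maps $d_i$ in the long exact sequence \eqref{eq:les-BM-hom} relating $H_\bullet^{BM}(\ol{O}_{p-1})$, $H_\bullet^{BM}(\ol{O}_p)$, and $H^\bullet(O_p)$ is precisely the content of Theorem~\ref{thm:di=0}, which holds for $X=\bb{C}^{m\times n}$ and all $p$, for $X=\bw^2\bb{C}^n$ and all $p$, and for $X=\Sym^2\bb{C}^n$ when $n-p$ is even or $p=1$ --- i.e.\ in exactly the cases listed in Theorem~\ref{thm:di=0}. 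Applying Proposition~\ref{prop:spec2} then yields the degeneration of \eqref{eq:locclospec} on the $E_2$ page, together with the additivity $h^i_{dR}(H^j_{O_p}(\mc{O}_X))=h^i_{dR}(H^j_{\ol{O}_p}(\mc{O}_X))+h^i_{dR}(H^{j+1}_{\ol{O}_{p-1}}(\mc{O}_X))$ for all $i,j\ge 0$.

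There is no genuine obstacle here: the substance has already been expended in proving Theorems~\ref{thm:main} and~\ref{thm:di=0} and Proposition~\ref{prop:spec2}, and what remains is bookkeeping. The only points that warrant a sentence of care are the indexing conventions ($\ol{O}_p\setminus O_p=\ol{O}_{p-1}$, the vacuous case $p=0$) and the reminder that the degeneration hypothesis of Proposition~\ref{prop:spec2} for $Y=\ol{Z}$ must be read to include the top orbit closure $\ol{O}_n=X$ (resp.\ $\ol{O}_m=X$), where it holds trivially. Once these are flagged, the corollary follows at once.
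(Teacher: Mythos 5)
Your argument is correct and is essentially the paper's own proof: the corollary is obtained by applying Proposition~\ref{prop:spec2} with $Z=O_p$, $\ol{Z}=\ol{O}_p$, $\ol{Z}\setminus Z=\ol{O}_{p-1}$, using Theorem~\ref{thm:main} (plus the trivial case \eqref{eq:CdR-for-X} when the closure is all of $X$) for the \v Cech--de Rham degeneration hypothesis and Theorem~\ref{thm:di=0} for the vanishing of the maps $d_i$. Your extra remarks on the indexing conventions and the vacuous $p=0$ case are harmless bookkeeping and do not change the route.
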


While the claim about the de Rham cohomology of $H^i_{O_p}(\mc{O}_X)$ from Proposition \ref{prop:spec2} is also valid in the cases above, we can show a sharper claim about these local cohomology modules as follows.

Due to parity reasons, we see from the formulas (\ref{eq:compos-loccoh-generic}), (\ref{eq:compos-loccoh-skew}), and (\ref{eq:locDpsym}) that the $\D$-modules $H^i_{\ol{O}_{p-1}}(\mc{O}_X)$ and $H^i_{\ol{O}_{p}}(\mc{O}_X)$ have no common composition factors. Thus, the maps $d^i$ in (\ref{eq:longloc}) (with $Z=O_p$) are all zero, and the long exact sequence breaks up into short exact sequences
\[0 \to H_{\ol{O}_p}^i(\mc{O}_X) \to H^i_{O_p}(\mc{O}_X) \to H^{i+1}_{\ol{O}_{p-1}}(\mc{O}_X) \to 0.\]
We claim that these exact sequences of equivariant $\D$-modules split. For the case of general matrices, this follows from \cite[Theorem 6.1 and Lemma 6.5]{lor-rai} and \cite[Theorem 5.4]{lor-wal}, for skew-symmetric matrices from \cite[Theorem 1.1]{perlman} and \cite[Theorem 5.7]{lor-wal}, and for symmetric matrices due to parity reasons by the formula (\ref{eq:locDpsym}) and \cite[Theorem 5.9]{lor-wal}.
Hence, for all $i\geq 0$ and $p\geq 1$ we have (as $\D_X$-modules)
\begin{equation}
    H^i_{O_p}(\mc{O}_X) \,\, \cong \,\, H_{\ol{O}_p}^i(\mc{O}_X) \oplus H^{i+1}_{\ol{O}_{p-1}}(\mc{O}_X),
\end{equation}
which is much stronger than the second claim in Proposition \ref{prop:spec2}.

\section*{Acknowledgements}
 Experiments with the computer algebra software Macaulay2 \cite{M2} have provided numerous valuable insights. Raicu acknowledges the support of the National Science Foundation Grant No.~1901886.

	\begin{bibdiv}
		\begin{biblist}

\bib{baum}{article}{
   author={Baum, P. F.},
   title={On the cohomology of homogeneous spaces},
   journal={Topology},
   volume={7},
   date={1968},
   pages={15--38}
}

\bib{borel}{article}{
   author={Borel, A.},
   title={Sur la cohomologie des espaces fibr\'{e}s principaux et des espaces
   homog\`enes de groupes de Lie compacts},
   language={French},
   journal={Ann. of Math. (2)},
   volume={57},
   date={1953},
   pages={115--207},
}

\bib{bm}{article}{
   author={Borel, A.},
   author={Moore, J. C.},
   title={Homology theory for locally compact spaces},
   journal={Michigan Math. J.},
   volume={7},
   date={1960},
   pages={137--159},
}

\bib{bridgland}{article}{
  author={Bridgland, N.},
  title={On the de Rham homology of affine varieties in characteristic $0$},
  journal = {arXiv},
  number = {2006.01334},
  date={2020},
}

\bib{bry-kash}{article}{
   author={Brylinski, J.-L.},
   author={Kashiwara, M.},
   title={Kazhdan-Lusztig conjecture and holonomic systems},
   journal={Invent. Math.},
   volume={64},
   date={1981},
   number={3},
   pages={387--410},
}

\bib{cartan}{article}{
   author={Cartan, H.},
   title={La transgression dans un groupe de Lie et dans un espace fibr\'{e}
   principal},
   language={French},
   conference={
      title={Colloque de topologie (espaces fibr\'{e}s), Bruxelles, 1950},
   },
   book={
      publisher={Georges Thone, Li\`ege; Masson et Cie., Paris},
   },
   date={1951},
   pages={57--71},
}

\bib{franz}{article}{
  author={Franz, M.},
  title={The cohomology rings of homogeneous spaces},
  journal = {arXiv},
  number = {1907.04777},
  date={2021}
}

\bib{fulton}{book}{
   author={Fulton, William},
   title={Intersection theory},
   series={Ergebnisse der Mathematik und ihrer Grenzgebiete. 3. Folge. A
   Series of Modern Surveys in Mathematics [Results in Mathematics and
   Related Areas. 3rd Series. A Series of Modern Surveys in Mathematics]},
   volume={2},
   edition={2},
   publisher={Springer-Verlag, Berlin},
   date={1998},
   pages={xiv+470}}

\bib{M2}{article}{
          author = {Grayson, D. R.},
          author = {Stillman, M. E.},
          title = {Macaulay 2, a software system for research
                   in algebraic geometry},
          journal = {Available at \url{http://www.math.uiuc.edu/Macaulay2/}}
        }
        
\bib{grothendieck}{article}{
   author={Grothendieck, A.},
   title={On the de Rham cohomology of algebraic varieties},
   journal={Inst. Hautes \'{E}tudes Sci. Publ. Math.},
   number={29},
   date={1966},
   pages={95--103},
}

\bib{hartshorne-derham}{article}{
   author={Hartshorne, R.},
   title={On the {D}e {R}ham cohomology of algebraic varieties},
   journal={Inst. Hautes \'{E}tudes Sci. Publ. Math.},
   number={45},
   date={1975},
   pages={5--99},
}

\bib{har-pol}{article}{
   author={Hartshorne, R.},
   author={Polini, C.},
   title={Simple {$\mathcal{D}$}-module components of local cohomology modules},
   journal={J. Algebra},
   volume={571},
   date={2021},
   pages={232--257},
   issn={0021-8693},
   }

\bib{htt}{book}{
   author={Hotta, R.},
   author={Takeuchi, K.},
   author={Tanisaki, T.},
   title={$D$-modules, perverse sheaves, and representation theory},
   series={Progress in Mathematics},
   volume={236},
   note={Translated from the 1995 Japanese edition by Takeuchi},
   publisher={Birkh\"{a}user Boston, Inc., Boston, MA},
   date={2008},
   pages={xii+407},
}

\bib{lor-rai}{article}{
   author={L\H{o}rincz, A. C.},
   author={Raicu, C.},
   title={Iterated local cohomology groups and Lyubeznik numbers for
   determinantal rings},
   journal={Algebra Number Theory},
   volume={14},
   date={2020},
   number={9},
   pages={2533--2569},
}

\bib{euler}{article}{
   author={L\H{o}rincz, A. C.},
   author={Raicu, C.},
   title = {Local Euler obstructions for determinantal varieties},
   journal = {arXiv},
   number = {2105.00271},
   date = {2021},
   note = {To appear in Topology Appl.}
   }
   
\bib{binary}{article}{
   author={L\H{o}rincz, A. C.},
   author={Raicu, C.},
   author={Weyman, J.},
   title={Equivariant {$\mathcal{D}$}-modules on binary cubic forms},
   journal={Comm. Algebra},
   volume={47},
   date={2019},
   number={6},
   pages={2457--2487}
}
   
\bib{lor-wal}{article}{
   author={L\H{o}rincz, A. C.},
   author={Walther, U.},
   title={On categories of equivariant {$D$}-modules},
   journal={Adv. Math.},
   volume = {351},
   date={2019},
   pages = {429--478}
}

\bib{lyubeznik}{article}{
   author={Lyubeznik, G.},
   title={Finiteness properties of local cohomology modules (an application
   of $D$-modules to commutative algebra)},
   journal={Invent. Math.},
   volume={113},
   date={1993},
   number={1},
   pages={41--55},
}

\bib{mac}{book}{
   author={Macdonald, I. G.},
   title={Symmetric functions and Hall polynomials},
   note={Oxford Mathematical Monographs},
   publisher={The Clarendon Press, Oxford University Press, New York},
   date={1979},
   pages={viii+180},
}

\bib{mimu-toda}{book}{
   author={Mimura, M.},
   author={Toda, H.},
   title={Topology of Lie groups. I, II},
   series={Translations of Mathematical Monographs},
   volume={91},
   note={Translated from the 1978 Japanese edition by the authors},
   publisher={American Mathematical Society, Providence, RI},
   date={1991},
   pages={iv+451},
}

\bib{may}{article}{
   author={May, J. P.},
   title={The cohomology of principal bundles, homogeneous spaces, and
   two-stage Postnikov systems},
   journal={Bull. Amer. Math. Soc.},
   volume={74},
   date={1968},
   pages={334--339}
}

\bib{mostow}{article}{
   author={Mostow, G. D.},
   title={On covariant fiberings of Klein spaces},
   journal={Amer. J. Math.},
   volume={77},
   date={1955},
   pages={247--278},
}


\bib{perlman}{article}{
   author={Perlman, M.},
   title={Lyubeznik numbers for Pfaffian rings},
   journal={J. Pure Appl. Algebra},
   volume={224},
   date={2020},
   number={5},
   pages={106247, 24},
}

\bib{perlman2}{article}{
   author={Perlman, M.},
   title = {Mixed Hodge structure on local cohomology with support in determinantal varieties},
   journal = {arXiv},
   number = {2102.04369},
   date = {2021}
   }

\bib{pragacz}{article}{
   author={Pragacz, P.},
   title={Enumerative geometry of degeneracy loci},
   journal={Ann. Sci. \'{E}cole Norm. Sup. (4)},
   volume={21},
   date={1988},
   number={3},
   pages={413--454},
}

\bib{pragrat}{article}{
   author={Pragacz, P.},
   author={Ratajski, J.},
   title={Polynomials homologically supported on degeneracy loci},
   journal={Ann. Scuola Norm. Sup. Pisa Cl. Sci. (4)},
   volume={23},
   date={1996},
   number={1},
   pages={99--118},
}

\bib{mhs}{book}{
   author={Peters, Chris A. M.},
   author={Steenbrink, Joseph H. M.},
   title={Mixed Hodge structures},
   series={Ergebnisse der Mathematik und ihrer Grenzgebiete. 3. Folge. A
   Series of Modern Surveys in Mathematics [Results in Mathematics and
   Related Areas. 3rd Series. A Series of Modern Surveys in Mathematics]},
   volume={52},
   publisher={Springer-Verlag, Berlin},
   date={2008},
   pages={xiv+470}
}

\bib{raicu-dmods}{article}{
   author={Raicu, C.},
   title={Characters of equivariant $\mathcal{D}$-modules on spaces of matrices},
   journal={Compos. Math.},
   volume={152},
   date={2016},
   number={9},
   pages={1935--1965},
}

\bib{raicu-survey}{article}{
   author={Raicu, C.},
   title={Homological invariants of determinantal thickenings},
   journal={Bull. Math. Soc. Sci. Math. Roumanie (N.S.)},
   volume={60(108)},
   date={2017},
   number={4},
   pages={425--446},
}

\bib{raicu-weyman}{article}{
   author={Raicu, C.},
   author={Weyman, J.},
   title={Local cohomology with support in generic determinantal ideals},
   journal={Algebra \& Number Theory},
   volume={8},
   date={2014},
   number={5},
   pages={1231--1257},
}

\bib{raicu-weyman-loccoh}{article}{
   author={Raicu, C.},
   author={Weyman, J.},
   title={Local cohomology with support in ideals of symmetric minors and
   Pfaffians},
   journal={J. Lond. Math. Soc. (2)},
   volume={94},
   date={2016},
   number={3},
   pages={709--725},
}

\bib{RWZ}{article}{
  author={Reichelt, T.},
  author={Walther, U.},
  author={Zhang, W.},
  title={On Lyubeznik type invariants},
  journal = {arXiv},
  number = {2106.04457},
  date={2021},
}

\bib{saito}{article}{
   author={Saito, Morihiko},
   title={Mixed Hodge modules},
   journal={Publ. Res. Inst. Math. Sci.},
   volume={26},
   date={1990},
   number={2},
   pages={221--333}
}

\bib{switala-derham}{article}{
   author={Switala, N.},
   title={On the de Rham homology and cohomology of a complete local ring in
   equicharacteristic zero},
   journal={Compos. Math.},
   volume={153},
   date={2017},
   number={10},
   pages={2075--2146},
}

\bib{terzic}{article}{
   author={Terzich, S.},
   title={Real cohomology of generalized symmetric spaces},
   language={Russian, with English and Russian summaries},
   journal={Fundam. Prikl. Mat.},
   volume={7},
   date={2001},
   number={1},
   pages={131--157},
}

\bib{totaro}{article}{
   author={Totaro, Burt},
   title={Chow groups, Chow cohomology, and linear varieties},
   journal={Forum Math. Sigma},
   volume={2},
   date={2014},
   pages={Paper No. e17, 25}
   }

\bib{weyman}{book}{
   author={Weyman, J.},
   title={Cohomology of vector bundles and syzygies},
   series={Cambridge Tracts in Mathematics},
   volume={149},
   publisher={Cambridge University Press, Cambridge},
   date={2003},
   pages={xiv+371},
}

\bib{zach}{article}{
  author={Zach, M.},
  title={On the topology of determinantal links},
  journal = {arXiv},
  number = {2107.01823},
  date={2021},
}

\bib{zelevinsky}{article}{
   author={Zelevinskii, A. V.},
   title={The $p$-adic analogue of the Kazhdan-Lusztig conjecture},
   language={Russian},
   journal={Funktsional. Anal. i Prilozhen.},
   volume={15},
   date={1981},
   number={2},
   pages={9--21, 96},
}

		\end{biblist}
	\end{bibdiv}

\end{document}